\documentclass[12pt,leqno]{amsart}
\usepackage[dvipsnames]{color}
\usepackage{amsfonts,amssymb,amsmath,amscd,amstext}
\numberwithin{equation}{section}
\usepackage{amsthm}
\usepackage[colorlinks=true,linkcolor=teal,citecolor=purple]{hyperref}
\usepackage[utf8]{inputenc}
\usepackage{graphicx}
\usepackage{changes}
\usepackage{comment}
\usepackage{mathtools}
\usepackage{tikz-cd}
\usepackage[noabbrev,capitalize,nameinlink]{cleveref}
\usepackage{mathdots}
\usepackage[a4paper,top=2.3cm,bottom=2.3cm,left=2cm,right=2cm]{geometry}

\usepackage{ulem}
\usepackage{soul}

\renewcommand{\leq}{\leqslant}
\renewcommand{\le}{\leqslant}

\newcommand{\Rn}{\mathbb{R}^n}

\newcommand{\loc}{\rm loc}

\newcommand{\X}{X}
\newcommand{\rr}{{\mathbb{R}}}

\newcommand{\classuno}{\mathcal S_1}
\newcommand{\classunoraff}{\mathcal{T}_1}
\newcommand{\classiraff}{\mathcal{T}_i}
\newcommand{\classdue}{\mathcal S_2}
\newcommand{\classi}{\mathcal S_i}
\newcommand{\classdueraff}{\mathcal T_2}

\newcommand{\D}{\mathcal D}

\newcommand{\Om}{\Omega}

\newcommand{\mA}{\mathcal{A}}

\newcommand{\scu}{\longrightarrow}

\newcommand{\lp}[1]{L^{p}(#1)}

\newcommand{\anso}[1]{W^{1,p}_X(#1)}

\newcommand{\Cm}{\C_{P}}

\newcommand{\C}{\mathcal{C}}


\newcommand{\clam}{\mathcal{M}_{p}(e_1,e_2,e_3,e_4,\mu)}

\newcommand{\average}{{\mathchoice {\kern1ex\vcenter{\hrule height.4pt
				width 6pt
				depth0pt} \kern-9.7pt} {\kern1ex\vcenter{\hrule height.4pt width 4.3pt
				depth0pt}
			\kern-7pt} {} {} }}
\newcommand{\ave}{\average\int}

\usepackage{amssymb,epsfig,xcolor,upgreek, cancel,enumitem}



\newcommand{\R}{{\bf R}}
\newcommand{\N}{{\bf N}}

\newfont{\bbten}{bbold12}

\def\media#1{{\int\!\!\!\!\!\!-}_{\!\!\!{#1}}}

\definechangesauthor[color=red]{J}
\definecolor{champagne}{rgb}{0.97, 0.91, 0.81}

\definecolor{asparagus}{rgb}{0.53, 0.66, 0.42}

\DeclareMathOperator{\divv}{div}

\DeclareMathOperator{\diver}{div}

\DeclareMathOperator{\supp}{supp}

\DeclareMathOperator{\Lip}{Lip}

\newtheorem{theorem}{Theorem}[section]
\newtheorem{proposition}[theorem]{Proposition}
\newtheorem{definition}[theorem]{Definition}
\newtheorem{lemma}[theorem]{Lemma}

\theoremstyle{definition}
\newtheorem{example}[theorem]{Example}
\newtheorem{remark}[theorem]{Remark}
\definechangesauthor[name=Simone, color=blue]{S}

\title[Variational convergences under moving anisotropies]{Variational convergences under moving anisotropies}

\author[A.\ Maione]{Alberto Maione}
\author[F.\ Paronetto]{Fabio Paronetto}
\author[S.\ Verzellesi]{Simone Verzellesi}

\address[A.\ Maione]{ Centre de Recerca Matemàtica, \newline \indent Edifici C, Campus Bellaterra, 08193 Bellaterra, Spain}
\email{amaione@crm.cat}

\address[F.\ Paronetto]{ Dipartimento di Matematica ``Tullio Levi-Civita'', Universit\`a di Padova, \newline \indent via Trieste 63 35121, Padova - Italy}
\email{fabio.paronetto@unipd.it}

\address[S.\ Verzellesi]{ Dipartimento di Matematica ``Tullio Levi-Civita'', Universit\`a di Padova,\newline \indent via Trieste 63 35121, Padova - Italy}
\email{simone.verzellesi@unipd.it}


\subjclass{49J45, 49Q20, 53C17}
\keywords{Integral representation; $\Gamma$-convergence; $H$-convergence; Local functionals; Anisotropic functionals; Moving anisotropies; Vector fields}
\thanks{{\it Acknowledgements.} The authors thank Fares Essebei, Andrea Pinamonti, Francesco Serra Cassano and Giacomo Vianello for interesting and valuable conversations on the topic of the paper.}

\bibliographystyle{abbrv} 
\begin{document}
\begin{abstract}
We study the asymptotic behaviour of sequences of integral functionals depending on moving anisotropies. 
We introduce and describe the relevant functional setting, establishing uniform Meyers-Serrin type approximations, Poincaré inequalities and compactness properties. 
We prove several $\Gamma$-convergence results, and apply the latter to the study of $H$-convergence of anisotropic linear differential operators.
\end{abstract}
\maketitle


\section{Introduction}
In this paper we study the asymptotic behaviour of sequences of functionals of the form
\[
\int_\Om f_h(x,X^hu(x))\,{\rm d}x,\quad u\in W^{1,p}_{X^h}(\Om),
\]
for a sequence of anisotropies $(X^h)_h= (X_1^h, \ldots, X_m^h)_h$, where $\Omega$ is a bounded domain of $\R^n$ and $m\leq n$.
As $h$ tends to $\infty$, we describe the convergence of these functionals, their minima, minimizers and momenta, to the corresponding limit quantities, associated with a limit functional
\begin{equation}\label{introlimitfun}
    \int_\Om f(x,Xu(x))\,{\rm d}x,\quad u\in W^{1,p}_{X}(\Om),
\end{equation}
driven by a limit anisotropy $X=(X_1,\ldots,X_m)$. 
The natural variational convergence to take into account is \emph{$\Gamma$-convergence} (see e.g. \cite{MR1968440,MR1201152}).
This technique, which is extremely useful both for theoretical purposes and practical applications, remains a crucial tool in the study of various problems in mechanics, such as homogenization, phase transitions and the asymptotic analysis of PDEs.
In addition to an interest in $\Gamma$-convergence itself, we explore its applications to the study of anisotropic, linear, symmetric PDEs, referring back to the so-called \emph{$G$-convergence} or \emph{$H$-convergence} (see e.g. \cite{pankovbook,tartarbook}
).

The investigation of anisotropic variational functionals as in \eqref{introlimitfun}, as well as their related functional frameworks, is originally motivated by L. H\"ormander's seminal work on \emph{hypoelliptic operators} \cite{Hörmander1967147}. 
The latter constitutes a milestone in the study of differential operators with underlying \emph{sub-Riemannian} type structures. 
Important evidences are the works of G.B. Folland \cite{Folland1975161} and of L. Rothschild and E.M. Stein \cite{MR0436223} in the context of \emph{stratified} and \emph{nilpotent Lie groups}. We refer to \cite{MR2363343} for further accounts on analysis on Lie groups, and to \cite{MR3971262,MR1421823,Enrico2025book} for thorough introductions to sub-Riemannian geometry.

As a prototypical example, consider the smooth anisotropy $X=(X_1,X_2)$ on $\R^3$, where 
\begin{equation}\label{anisotropiheisenberg}
    X_1=\frac{\partial}{\partial x_1}+x_2\frac{\partial}{\partial x_3}\qquad\text{and}\qquad X_2=\frac{\partial}{\partial x_2}-x_1\frac{\partial}{\partial x_3}.
\end{equation}
The latter generate, via Lie-brackets, the Lie algebra of the so-called \emph{first Heisenberg group}, and induce on it a \emph{sub-Riemannian structure}. 
The relevant hypoelliptic operator associated with $X$ is the so-called \emph{sub-Laplacian}
\begin{equation}\label{laplaciansub}
    \left(X_1\right)^2u+\left(X_2\right)^2 u,
\end{equation}
and its associated \emph{Dirichlet energy} reads as
\begin{equation} \label{dirienesub}
    \int_\Omega \left(|X_1u|^2+|X_2 u|^2\right)\,{\rm d}x.
\end{equation}
As \eqref{laplaciansub} is not elliptic, \eqref{dirienesub} is not coercive. 
Moreover, the Euclidean Sobolev spaces $W^{1,2}(\Om)$ are not the correct finiteness domains for functionals as in \eqref{laplaciansub}. 
These issues clearly obstruct the use of classical techniques to study minimization properties of \eqref{dirienesub}, preventing for instance the $L^2$-lower semicontinuity of its Euclidean extension to $L^2(\Om),$ namely
\begin{align*}
    \displaystyle{\begin{cases}
    \int_\Omega \left(|X_1u|^2+|X_2 u|^2\right)\,{\rm d}x&\text{ if }u\in W^{1,2}(\Om),\\
    \infty&\text{ if }u\in L^2(\Omega)\setminus W^{1,2}(\Om).
\end{cases}}
\end{align*}

To overcome these obstacles, building on the foundational work of G.B. Folland and E.M. Stein \cite{MR0657581}, the correct functional framework has been developed by B. Franchi, R. Serapioni and F. Serra Cassano \cite{MR1437714,MR1448000} and by N. Garofalo and D.M. Nhieu \cite{MR1404326,GN}. 
Precisely, when $1\leq p<\infty$, the study of anisotropic functionals in the greater generality of \eqref{introlimitfun} can be carried out via the introduction of suitable anisotropic Sobolev and $BV$ spaces, say $W^{1,p}_X(\Om)$ and $BV_X(\Om)$, emerging as the natural domains of functionals as in \eqref{introlimitfun}. 
Although this construction is possible for arbitrary anisotropies, the specific structure of $X$ may lead to good approximation results \emph{à la} Meyers-Serrin, as well as to Poincaré inequalities and Rellich-Kondrachov compactness properties. We refer to \Cref{Section2} for more specific insights. When instead $p=\infty$, the reader is referred to \cite{PVW,MR2247884,MR2546006} for some anisotropic $L^\infty$-variational problems.

Asymptotic results for functionals in the \emph{static} case as in \eqref{introlimitfun}, i.e. when the anisotropy $X$ is fixed, started in \cite{MR4108409,MR4504133,MR4054935} to generalize classical integral representation and $\Gamma$-convergence results due to G. Buttazzo and G. Dal Maso \cite{MR0839727}. These results were later extended in \cite{MR4609808,MR4566142F} to complete integral functionals of the form
\[
\int_\Om f(x,u(x),Xu(x))\,{\rm d}x,\quad u\in W^{1,p}_{X}(\Om),
\]
generalizing the corresponding Euclidean counterparts \cite{MR0583636,MR0794824}.
In all the above-mentioned contributions, the authors assume a full-rank condition on the family $X$, the so-called \emph{Linear Independence Condition (LIC)}.
This technical assumption has been definitely removed by the third-named author in \cite{withoutlic}. Thanks to this set of results, variational properties for static anisotropic functionals are now well-understood, and available in great generality.  

In this paper we extend the results obtained in the aforementioned literature by considering \emph{moving anisotropies}. Namely, a static anisotropy is replaced by a sequence $(X^h)_h$ converging to a limit anisotropy $X$. 
Although our setting will be fairly more general, it models, as a particular case, the regularization of a sub-Riemannian structure via Riemannian approximants. As an instance, consider the 
anisotropy $X=(X_1,X_2)$ introduced in \eqref{anisotropiheisenberg} to model the first sub-Riemannian Heisenberg group. Its natural sub-Riemannian structure, say $g$, can be 
approximated by a sequence of Riemannian metrics, say $g_h$, each induced by the family of vector fields $X^h=(X_1,X_2,X_3^h)$, where 
\begin{equation*}
    X_3^h=\frac{1}{h}\frac{\partial}{\partial x_3}.
\end{equation*}
The benefits of such an approximation are manifold. For instance,  we stress that the natural approximating operators related to \eqref{laplaciansub}, namely
\begin{equation*}
      \left(X_1\right)^2u+\left(X_2\right)^2 u+\left(X_3^h\right)^2u,
\end{equation*}
are precisely the \emph{Laplace-Beltrami} operators associated with the Riemannian structures $g_h$. In turn, they are uniformly elliptic operators, their Dirichlet energies are coercive, and their associated functional frameworks boil down to the Euclidean ones.
We refer e.g. to \cite{MR2312336,MR2354992} for further insights, and to \cite{MR2774306,MR2262784,MR4761954,MR2043961,pozuelo2024existence} for a non-exhaustive list of applications of this approximation technique.

The study of asymptotic properties of functionals of the form 
\begin{align}\label{funzionaligammaintro}
    F_h(u)=\displaystyle{\begin{cases}
    \int_\Om f_h(x,X^hu(x)){\rm d}x&\text{ if }u\in W^{1,p}_{X^h}(\Om),\\
    \infty&\text{ if }u\in L^p(\Omega)\setminus W^{1,p}_{X^h}(\Om),
\end{cases}}
\end{align}
when, say, $1<p<\infty$, requires some preliminary precautions. Indeed, contrarily to the static case, there are no \emph{a priori} relations between the finiteness domains of each functional in \eqref{funzionaligammaintro}.
Indeed, the latter may naturally vary according to the associated anisotropy. To this aim, it is important to understand how the approximating Sobolev spaces relate to the limit one, as discussed thoroughly in \Cref{Section3}.

Prescribing Dirichlet boundary conditions is also a delicate matter. Indeed, just as any finiteness domain in \eqref{funzionaligammaintro} depends on the associated anisotropy, it is natural in this setting to prescribe, for each fixed anisotropy $X^h$, a different boundary condition, say $\varphi_h\in W^{1,p}_{X_h}(\Om)$, and to consider sequences of functionals of the form
\begin{align}\label{funzionaligammaintrodiri}
    F^{\varphi_h}_h(u)=\displaystyle{\begin{cases}
    \int_\Om f_h(x,X^hu(x)){\rm d}x&\text{ if }u\in W^{1,p}_{X^h,\varphi_h}(\Om),\\
    \infty&\text{ if }u\in L^p(\Omega)\setminus W^{1,p}_{X^h,\varphi_h}(\Om),
\end{cases}}
\end{align}
where $W^{1,p}_{X^h,\varphi_h}(\Om)$ is the correct affine Sobolev space associated with the couple $(X^h,\varphi_h)$. We stress that, although moving boundary conditions may also be considered in the static case, in the moving one this choice is forced by the very structure of the problem. In this regard, we will discuss the right convergence assumptions on $(\varphi_h)_h$ to the limit boundary condition, say $\varphi$, in order to suitably relate the approximating and the limit affine Sobolev spaces (cf. \Cref{Section3}).

A third important novelty in considering sequences of functionals as in \eqref{funzionaligammaintro} and \eqref{funzionaligammaintrodiri} consists in the fact that, unlike in the static case, possible coercivity and boundedness conditions typically depend on the anisotropy, and are thus not uniform along the sequence. Indeed, we will deal with growth conditions of the form 
\begin{equation}\label{movinggrowthintro}
     d_1  \int_\Om |X^hu(x)|^p{\rm d}x\leq \int_\Om f_h(x,X^hu(x)){\rm d}x\leq  \int_\Om a(x)\,{\rm d}x+d_2\int_\Om |X^hu(x))|^p{\rm d}x,
\end{equation}
where $a\in L^1(\Om)$ and $0<d_1\leq d_2$ are fixed. This lack of uniformity, as discussed thoroughly in \Cref{Section6}, prevents the use of some classical tools to establish $\Gamma$-compactness properties (cf. \Cref{spiegoneuno}), and requires a careful analysis of the asymptotic behaviour of the partial norms $\|X^hu\|_{L^p(\Om,\R^m)}$.

A first fundamental step in handling these issues, as discussed in \Cref{Section4}, consists in establishing Meyers-Serrin type approximation results that keep track of the approximating sequence of anisotropies (cf. \Cref{meyser}). Roughly speaking, under uniform convergence of the anisotropies, functions in $W^{1,p}_X(\Om)$ can be well-approximated by a sequence of smooth functions $(u_h)_h$, where each $u_h\in W^{1,p}_{X^h}(\Om)$, in the sense that
\begin{equation*}
      \text{$u_h\to u$ strongly in $L^p(\Om)$}\qquad\text{and}\qquad\text{$X^hu_h\to Xu$ strongly in $L^p(\Om;\R^m)$}.
\end{equation*}
This approximation property, as explained in \Cref{Section6}, is crucial to deal with moving growth conditions as in \eqref{movinggrowthintro}. In the same spirit, it is possible to provide a uniform approximation result for affine Sobolev spaces (cf. \Cref{MeySerAffine}) under suitable convergence assumptions on the approximating sequence of boundary data. We stress that, in the static case, this result would follow \emph{by definition}, as functions in $W^{1,p}_{X,\varphi}(\Om)$ are, up to a translation by $\varphi$, in the closure of $\mathbf C^\infty_c(\Om)$. In order to show the validity of the aforementioned approximations, we introduce a non-standard mollification technique (cf. \eqref{modulocontmatrici} and \eqref{mollificatorigiusti}), whose convergence rate is appropriately related to the convergence rate of the anisotropies.
%

The same mollification procedure also plays a key role in \cref{Section5}, where we establish Rellich-Kondrachov compactness properties and Poincaré inequalities that, again, keep into account the entire sequence $(X^h)_h$ (cf. \Cref{nuovothmcompattezza} and \Cref{convrayleight2}, respectively). 
Precisely, we show that, as soon as $X$ is strong enough to guarantee the validity of these properties in the limit, then such properties extend uniformly along $(X^h)_h$. We emphasise that such uniformity is fundamental in bypassing the non-uniformity of the coercivity assumptions as in \eqref{movinggrowthintro}. Indeed, a careful combination of the previous results will still provide, as in the static setting, boundedness of minimizing sequences and convergence of minima and minimizers (cf. \Cref{minimaminimizerthm}).

The core of this work is \cref{Section6}, where we provide several results related to $\Gamma$-convergence.
Owing to \Cref{Section4}, we first prove $\Gamma$- convergence of the partial norms $\|X^hu\|_{L^p(\Om,\R^m)}$ to the limit one $\|Xu\|_{L^p(\Om,\R^m)}$ (cf. \Cref{normgammaconv}). 
As already mentioned, this first feature is crucial in the establishment of a general $\Gamma$-compactness theorem for functionals as in \eqref{funzionaligammaintro} (cf. \cref{gammathmwithproof}). 
Next, we keep into account Dirichlet boundary data, obtaining $\Gamma$-compactness for sequences of functionals as in \eqref{funzionaligammaintrodiri} (cf. \Cref{mainthmdirichletbc}). 
As pointed out in \Cref{spiegonedue}, the fact that the boundary data necessarily evolves with the anisotropies causes some challenges in the identification of the appropriate recovery sequences, as the latter need to keep into account the above-mentioned evolution. In this regard, a key tool is provided by the aforementioned uniform approximation result for affine Sobolev spaces, \Cref{MeySerAffine}.
As already highlighted, the results of \Cref{Section5} allow us to prove the convergence of minima and minimizers for $\Gamma$-converging sequences of functionals, as in \eqref{funzionaligammaintrodiri}, as stated in \Cref{minimaminimizerthm}.

We conclude \Cref{Section6} by specializing our main $\Gamma$-compactness result, \Cref{gammathmwithproof}, to the class of anisotropic \emph{quadratic forms} 
\begin{equation*}
    F_h(u)=\begin{cases}
    \int_\Om\langle A^h(x)X^hu(x),X^hu(x)\rangle_{\R^m}{\rm d}x\quad&\text{if }u\in W^{1,2}_{X^h}(\Om),\\
    \infty\quad&\text{if }u\in L^2(\Om)\setminus W^{1,2}_{X^h}(\Om),
    \end{cases}
\end{equation*}
whence establishing $\Gamma$-compactness in this particular subclass (cf. \Cref{compactnessL2perHconvergenza}). The study of the latter is particularly relevant in connection with anisotropic sequences of symmetric, linear differential problems of the form 
%
%
%
\begin{equation}\label{euqationintrogeneral}
    \begin{cases}
\mu u+{\rm div}_{X^h}(A^h(x)X^hu)=f&\text{in }\Omega,\\
u_h\in W^{1,2}_{X^h,\varphi_h}(\Omega),
\end{cases}
\end{equation}
as well as with their asymptotic behaviour. 
We stress that the latter are clear generalisations of the prototypical differential problems associated with \eqref{laplaciansub}. In \Cref{Section7}, we adapt the classical notion of $H$-convergence for keeping into account the moving anisotropic framework (cf. \Cref{hconvdef}), and we establish a related $H$-compactness property (cf. \Cref{Thm:H-compactness}). This will follow as a corollary of the previous results, as well as of some additional convergence properties established in \Cref{thmmomentaconv} to deal with the so-called \emph{momenta} associated with \eqref{euqationintrogeneral}.
%
%
\medskip

The paper is structured as follows.  
In \cref{Section2}, we recall definitions and properties of the static anisotropic functional setting.  
In \cref{Section3}, we introduce the relevant classes of moving anisotropies we are interested in, and we discuss some preliminary functional results.  
Later, in \cref{Section4}, we establish uniform Meyers-Serrin type approximations for anisotropic Sobolev spaces (\cref{meyser}) and for affine anisotropic Sobolev spaces (\cref{MeySerAffine}).
In \cref{Section5}, we state and prove a uniform Rellich-Kondrachov compactness result (\cref{nuovothmcompattezza}), as well as a uniform Poincaré inequality (\cref{convrayleight2}).  
In \cref{Section6}, we first show the main $\Gamma$-compactness result, \cref{gammathmwithproof}, as well as its extension to functionals including Dirichlet boundary conditions (\cref{mainthmdirichletbc}) and quadratic forms (\cref{compactnessL2perHconvergenza}).  
Moreover, we provide convergence of minima, minimizers (\cref{minimaminimizerthm}) and momenta (\cref{thmmomentaconv}).  
As a corollary, in \Cref{Section7} we obtain the $H$-compactness result, \cref{Thm:H-compactness}.  


\section{Anisotropic functional frameworks}\label{Section2}
\subsection*{Main notations}
If no ambiguity arises, we let $\infty=+\infty$ and $\N=~\N~\setminus~\{0\}$. If $1\leq p\leq\infty$, we denote by $p'$ the H\"older-conjugate exponent of $p$. We write $(a_h)_h\subseteq (A_h)_h$ meaning that $a_h\in A_h$ for any $h\in\N$. We fix $m,n\in\N$ such that $0<m\leq n$. If $A,B\subseteq\R^n$, we let $A+B=\{a+b\,:\,a\in A,\,b\in B\}$.
For $\alpha,\beta\in\N$, we denote by $M(\alpha,\beta)$ the set of matrices with $\alpha$ rows and $\beta$ columns.
Vectors in $\R^\alpha$ are seen as matrices in $M(\alpha,1)$. We denote by $\langle\cdot,\cdot\rangle_{\R^\alpha}$ the scalar product between vectors in $\R^\alpha$.
Fixed an open and bounded set $\Om\subseteq\R^n$, we denote by $\mA$ the class of all the open subsets of $\Om$, and for a function $u$ defined in $\Omega$, we may tacitly assume that it is defined on $\R^n$ by extending it to be zero outside $\Om$. Finally, we denote by $Du$ the (distributional) Euclidean gradient of $u$. 
\subsection{Anisotropies} \label{section21}


Throughout the paper, $\Om$ is an open and bounded subset of $\R^n$. Given a family $\X\coloneqq(X_1,\ldots,X_m)$ of Lipschitz continuous vector fields on $\Omega$, such that
\[
X_j=\sum_{i=1}^nc_{j,i}\frac{\partial}{\partial x_i},\quad\text{with }c_{j,i}\in\Lip(\Om)
\]
for any $j=1,\ldots,m$ and any $i=1,\ldots,n$, we denote by $\C(x)\coloneqq[c_{j,i}(x)]_{\substack{{i=1,\dots,n}\\{j=1,\dots,m}}}$ the $m\times n$ \emph{coefficient matrix} associated with $X$. We may refer to $X$ as \emph{(static) anisotropy}, or \emph{$X$-gradient}. This notation is motivated by the following definition.
\begin{definition}\label{xgraddef}
Let $u$ and $V$ be a $1$-dimensional and an $m$-dimensional distribution.
\begin{enumerate}
    \item The $X$-gradient of $u$ is the $m$-dimensional distribution defined by 
    \begin{equation*}
        Xu(\varphi)= -u\left(\divv(\C^T\varphi)\right)\qquad\text{for any $\varphi\in \mathbf{C}^\infty _c(\Om;\R^m)$.}
    \end{equation*}
    \item The $X$-divergence of $V$ is the $1$-dimensional distribution defined by 
    \begin{equation*}
        \divv_X(V)(\varphi)= -V(\C D\varphi)\qquad\text{for any $\varphi\in \mathbf{C}^\infty _c(\Om)$.}
    \end{equation*}
\end{enumerate}
\end{definition}
Clearly, in the isotropic case, the above definition returns the usual notions of gradient and divergence.
In what follows, we will sometimes consider the following classes of $X$-gradients.
\begin{definition}\label{deficlassminipoinc}
We let $\mathcal{M}=\mathcal{M}(\Om_0,C_d,c,C)$ be the class of $X$-gradients that are defined and Lipschitz continuous on an open set $\Omega_0\supseteq\overline{\Omega}$, and that satisfy the following three conditions.
\begin{itemize}
    \item [$(\mathrm{H}.1)$] Let $d$ be the \emph{Carnot-Carath\'eodory distance} induced by $X$ on $\Om_0$ (cf. e.g. \cite{MR1421823}). 
Then $d$ is finite and continuous with respect to the usual topology of $\R^n$.
 \item [$(\mathrm{H}.2)$] 
There exist positive constants $R, C_d$ such that 
\begin{equation*}\label{h2ip}
    |B_d(x,2r)| \leqslant C_d \, |B_d(x,r)|
\end{equation*}
for any $x \in \overline{\Omega}$ and $r \leqslant R$. 
Here, $B_d(x,r)$ denotes the open metric ball with respect to $d$.
 \item [$(\mathrm{H}.3)$] 
There exist positive constants $c, C$ such that for every open ball $B\coloneqq B_d(\bar{x},r)$, with $\bar{x}\in\Omega$ such that $cB\coloneqq B_d(\bar{x},cr)\subseteq\Omega_0$, and for every $u\in \mathrm{Lip}(\overline{c B})$ and $x \in \overline{B}$,
\begin{equation*}\label{h3ip}
    \left|u(x)-\frac{1}{|B|}\int_B u(y)\,{\rm d}y\right|\leq C \int_{cB} |Xu(y)| \frac{d(x,y)}{|B_d(x,d(x,y))|}\,{\rm d}y.
\end{equation*}
\end{itemize}
\end{definition}
Several important families of $X$-gradients belong to the class $\mathcal{M}(\Om_0,C_d,c,C)$.
In particular, we recall that all \emph{H\"ormander vector fields}, i.e. smooth vector fields for which the rank of the Lie algebra generated by $X_1,\ldots, X_m$ equals $n$ at any point of $\Omega_0$, belong to the class $\mathcal{M}(\Om_0,C_d,c,C)$.
Among H\"ormander vector fields, we recall the following instances.
\begin{itemize}
    \item[(i)] \emph{Euclidean vector fields:} $X=\left(\frac{\partial}{\partial x_1},\dots,\frac{\partial}{\partial x_n}\right)$, defined on $\R^n$.
    \item[(ii)] \emph{Grushin vector fields:} $X=\left(\frac{\partial}{\partial x_1},x_1\frac{\partial}{\partial x_2}\right)$, defined on $\R^2$.
    \item[(iii)] \emph{Heisenberg vector fields:} $X=\left(\frac{\partial}{\partial x_1}+x_2\frac{\partial}{\partial x_3},\frac{\partial}{\partial x_2}-x_1\frac{\partial}{\partial x_3}\right)$, defined on $\R^3$.
\end{itemize}
In particular, Heisenberg vector fields serve as a prototype of \emph{Carnot groups vector fields} (cf. e.g. \cite{MR2363343}).
It is worth noting that all Carnot groups vector fields are, by definition, H\"ormander vector fields, and hence belong to the class $\mathcal{M}(\Om_0,C_d,c,C)$.
For further insights on this topic and for examples of families of vector fields in the class $\mathcal{M}(\Om_0,C_d,c,C)$ that do not satisfy the H\"ormander condition, the reader is referred to \cite{MR4504133}.


\subsection{Sobolev spaces} 
Owing to \Cref{xgraddef}, we introduce the main functional framework.

\begin{definition}
The anisotropic Sobolev spaces associated to an $X$-gradient are defined by
\begin{align*}
    \anso{\Om}&\coloneqq\{u\in L^p(\Om)\,:\,Xu\in L^p(\Om;\R^m)\},\\
    H^{1,p}_X(\Om)&\coloneqq\overline{\mathbf{C}^\infty(\Om)\cap W^{1,p}_X(\Om)}^{\|\cdot\|_{W^{1,p}_X(\Om)}},\\
    W^{1,p}_{X,0}(\Om)&\coloneqq\overline{\mathbf{C}^\infty_c(\Om)\cap W^{1,p}_X(\Om)}^{\|\cdot\|_{W^{1,p}_X(\Om)}},\\
    W^{1,p}_{X,\varphi}(\Om)&\coloneqq\{u\in W^{1,p}_X(\Om)\,:\,u-\varphi\in W^{1,p}_{X,0}(\Om)\}\qquad\text{ for any }\varphi\in W^{1,p}_X(\Om).
\end{align*}
\end{definition}
It is well-known (cf. \cite{MR0657581}) that the vector space $\anso{\Om}$, endowed with the norm
\[
\Vert u\Vert_{\anso{\Om}}\coloneqq\Vert u\Vert_{L^p(\Om)}+\Vert Xu\Vert_{L^p(\Om;\R^m)},
\]
is a Banach space for any $1\leq p<+\infty$, reflexive when $1<p<+\infty$. 
The Lipschitz continuity assumption on the family $\X$ ensures that 
$W^{1,p}(\Om)$ embeds continuously into $W^{1,p}_X(\Om)$ (cf. \cite{MR4609808,MR4108409}), where $W^{1,p}(\Om)$ denotes the Euclidean Sobolev space.
Precisely, for any $u\in W^{1,p}(\Omega)$, the $X$-gradient admits the Euclidean representation
\begin{equation}\label{euclrapprgrad}
    Xu(x)=\C(x)Du(x)
\end{equation}
for a.e. $x\in\Om$.
We also recall that, as for classical Sobolev spaces, $H^{1,p}_X(\Om)\subseteq\anso{\Om}$.
The classical result of Meyers and Serrin 
is still valid in this anisotropic setting and it was proved, independently, in \cite{MR1437714} and \cite{GN}.
We state the result in what follows, and refer again the interested reader to \cite{MR4504133} for further discussions on this topic.
\begin{theorem}\label{MeySer}
For any $1\leq p <\infty$, it holds that $H^{1,p}_X(\Om)=\anso{\Om}.$
\end{theorem}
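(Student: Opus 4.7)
The plan is to run the classical Meyers--Serrin scheme, with the decisive extra ingredient being a Friedrichs-type commutator estimate that exploits the Lipschitz regularity of the coefficient matrix $\C$ in the definition of $X$.

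The inclusion $H^{1,p}_X(\Om)\subseteq \anso{\Om}$ is immediate from the completeness of $\anso{\Om}$. For the converse, fix $u\in \anso{\Om}$ and $\varepsilon>0$, and aim to produce $v\in\mathbf{C}^\infty(\Om)\cap \anso{\Om}$ with $\|u-v\|_{\anso{\Om}}<\varepsilon$.

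\smallskip

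\emph{Step 1: a local mollification lemma.} Let $(\rho_\delta)_\delta$ be a standard family of Friedrichs mollifiers, and let $w\in \anso{\Om}$ have compact support in $\Om$. For $\delta$ small enough, $\rho_\delta\ast w\in \mathbf{C}^\infty_c(\Om)$, and I claim that $\rho_\delta\ast w\to w$ in $\anso{\Om}$. The $L^p$-convergence is classical. For each $X$-component, I decompose
\begin{equation*}
    X_j(\rho_\delta\ast w)(x) = (\rho_\delta\ast X_j w)(x) + R_{j,\delta}(x),
\end{equation*}
where, after integrating by parts in the distributional pairing defining $X_jw$, the commutator term reads
\begin{equation*}
    R_{j,\delta}(x) = \int w(y)\sum_{i=1}^n\bigl(c_{j,i}(y)-c_{j,i}(x)\bigr)\,\partial_{y_i}\rho_\delta(x-y)\,{\rm d}y \,+\, \int w(y)\sum_{i=1}^n(\partial_i c_{j,i})(y)\,\rho_\delta(x-y)\,{\rm d}y.
\end{equation*}
Using $|c_{j,i}(y)-c_{j,i}(x)|\leq L|x-y|$ from the Lipschitz hypothesis together with the scaling $\|\partial\rho_\delta\|_{L^1}\lesssim\delta^{-1}$, a change of variables $z=(x-y)/\delta$ and a first-order Taylor expansion of $c_{j,i}$ at points where $\nabla c_{j,i}$ exists (Rademacher) show that the two integrals cancel in the limit, so $R_{j,\delta}\to 0$ in $L^p(\Om)$. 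Combined with $\rho_\delta\ast X_j w\to X_j w$ in $L^p$, this yields the claim.

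\smallskip

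\emph{Step 2: partition of unity and assembly.} Pick an exhaustion $\emptyset=\Om_0\subset\subset \Om_1\subset\subset \Om_2\subset\subset\cdots$ with $\bigcup_k\Om_k=\Om$, set $A_k:=\Om_{k+1}\setminus\overline{\Om_{k-1}}$, and choose $(\psi_k)_k\subset \mathbf{C}^\infty_c(\Om)$ subordinate to $(A_k)_k$ with $\sum_k\psi_k\equiv 1$ on $\Om$. Since each $\psi_k$ is smooth, the Leibniz rule $X_j(\psi_k u)=\psi_k X_j u+u\,X_j\psi_k$ holds a.e.\ and gives $\psi_k u\in \anso{\Om}$ with compact support in $A_k$. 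Using Step~1, choose $\delta_k>0$ so small that $\supp(\rho_{\delta_k}\ast(\psi_k u))\subset A_k$ and
\begin{equation*}
    \|\rho_{\delta_k}\ast(\psi_k u)-\psi_k u\|_{\anso{\Om}}<\varepsilon/2^k.
\end{equation*}
Define $v:=\sum_{k}\rho_{\delta_k}\ast(\psi_k u)$. Local finiteness of $(A_k)_k$ gives $v\in \mathbf{C}^\infty(\Om)$, while a telescoping argument delivers $\|v-u\|_{\anso{\Om}}<\varepsilon$, concluding the proof.

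\smallskip

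The main obstacle is the commutator estimate in Step~1: convolution does not commute with $X$ because the coefficients $c_{j,i}$ are non-constant, so one has to track the error explicitly and verify that the Lipschitz factor $|c_{j,i}(y)-c_{j,i}(x)|\lesssim|y-x|$ precisely compensates the $\delta^{-1}$-singularity of $\partial\rho_\delta$, with the lower-order piece absorbing the $\divv$-term coming from the integration by parts. The rest of the argument is a routine Meyers--Serrin patching.
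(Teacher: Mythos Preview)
The paper does not supply its own proof of this theorem: it states the result and attributes it to \cite{MR1437714} and \cite{GN}. Your argument is precisely the one carried out in \cite{MR1437714}: the Friedrichs-type commutator estimate of your Step~1 is the content of \cite[Proposition~1.2.2]{MR1437714} (a result the present paper invokes repeatedly in \Cref{Section4} and \Cref{Section5}), and the partition-of-unity assembly of Step~2 is the standard Meyers--Serrin patching. So your approach coincides with the cited proof.

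One expository remark: your justification that $R_{j,\delta}\to 0$ in $L^p$ is only heuristic. The pointwise Taylor/Rademacher computation correctly explains \emph{why} the two integrals cancel in the limit (indeed $\int z_k\,\partial_i\rho(z)\,{\rm d}z=-\delta_{ik}$ produces exactly $-w\sum_i\partial_i c_{j,i}$ from the first integral, matching the second), but turning this into an honest $L^p$ statement for arbitrary $w\in L^p$ requires the usual two-step device: first bound the operator $w\mapsto R_{j,\delta}[w]$ uniformly on $L^p$ via Young's inequality (the Lipschitz bound $|c_{j,i}(y)-c_{j,i}(x)|\le L|x-y|$ kills the $\delta^{-1}$ from $\partial\rho_\delta$, giving an $L^1$-kernel bound independent of $\delta$), then check $R_{j,\delta}\to 0$ on the dense class $\mathbf{C}^\infty_c$ by direct computation. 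This is not a gap in the strategy, only in the write-up.
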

If, in addition, the family $X$ belongs to the class $\mathcal{M}(\Omega_0,C_d,c,C)$, then the following Poincar\'e inequality and Rellich-Kondrachov theorem hold.
We refer the interested reader to \cite[Proposition 2.16]{MR4504133} and \cite[Theorem 3.4]{MR1448000}, respectively.
\begin{proposition}[Poincar\'e inequality]\label{poincaprelprop}
Let $\Omega\subseteq\R^n$ be connected, let $1\leq p<\infty$ and let $X\in\mathcal{M}(\Omega_0,C_d,c,C)$.
Then, there exists a positive constant $c_{p,\Omega}=c_{p,\Om}(p,\Om)$ such that 
\begin{equation*}
    c_{p,\Omega}\,\int_\Om|u|^p\,{\rm d}x\le\int_\Om |Xu|^p\,{\rm d}x\quad\text{for any $u\in W^{1,p}_{X,0}(\Omega)$.}
\end{equation*}
\end{proposition}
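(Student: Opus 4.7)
The plan is to prove the inequality first for test functions and extend by density, then combine the representation formula from $(\mathrm{H}.3)$ with a zero-extension trick that turns a ball-Poincaré into a global Poincaré with zero boundary values.

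\textbf{Step 1 (density reduction).} By the very definition of $W^{1,p}_{X,0}(\Omega)$, it suffices to establish the inequality for $u\in \mathbf{C}^\infty_c(\Omega)\cap W^{1,p}_X(\Omega)$ and pass to the limit. Such a $u$ is extended by zero to all of $\R^n$, so that both $u$ and $Xu$ vanish a.e.\ outside $\Omega$.

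\textbf{Step 2 (selection of a good ball).} Since $\overline\Omega$ is compact and contained in the open set $\Omega_0$, and by $(\mathrm{H}.1)$ the CC-distance $d$ is continuous with respect to the Euclidean topology, we can pick $\bar x\in\Omega$ and $r>0$ such that $\overline\Omega\subseteq B\coloneqq B_d(\bar x,r)$ and $cB=B_d(\bar x,cr)\subseteq\Omega_0$. Moreover, by $(\mathrm{H}.1)$ each $B_d(\bar x,\rho)$ is Euclidean-open, so choosing $r$ strictly larger than necessary to contain $\overline\Omega$ forces $B\setminus\overline\Omega$ to be a non-empty Euclidean-open set, hence $|B\setminus\Omega|>0$.

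\textbf{Step 3 (ball $(p,p)$-Poincaré).} From the representation formula in $(\mathrm{H}.3)$, together with the doubling hypothesis $(\mathrm{H}.2)$, a standard Hedberg-type argument (splitting the fractional integral into dyadic annuli centered at $x$ and using a Hardy-Littlewood maximal function bound) produces the $L^p$-Poincaré inequality on $B$, namely
\[
\int_B |u-u_B|^p\,{\rm d}x \leq C_1\int_{cB}|Xu|^p\,{\rm d}x = C_1 \int_\Omega |Xu|^p\,{\rm d}x,
\]
where $u_B=|B|^{-1}\int_B u$ and the last equality uses the zero extension of $Xu$. This is where $(\mathrm{H}.2)$ is essential, and is the single technical step I would expect to be the main obstacle.

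\textbf{Step 4 (controlling the mean and concluding).} The zero-extension trick: since $u\equiv 0$ on $B\setminus\Omega$, the pointwise identity $u-u_B=-u_B$ holds there, so
\[
|B\setminus\Omega|\,|u_B|^p = \int_{B\setminus\Omega}|u-u_B|^p\,{\rm d}x \leq \int_B|u-u_B|^p\,{\rm d}x \leq C_1\int_\Omega|Xu|^p\,{\rm d}x,
\]
and $|B\setminus\Omega|>0$ by Step 2. Combining the triangle inequality $|u|^p\leq 2^{p-1}|u-u_B|^p+2^{p-1}|u_B|^p$ with Step 3 and the bound just obtained on $|u_B|^p$,
\[
\int_\Omega|u|^p\,{\rm d}x \leq 2^{p-1}\,C_1\left(1+\frac{|\Omega|}{|B\setminus\Omega|}\right)\int_\Omega|Xu|^p\,{\rm d}x,
\]
which yields the desired Poincaré inequality with $c_{p,\Omega}$ the reciprocal of that constant. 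Extending to general $u\in W^{1,p}_{X,0}(\Omega)$ by density closes the proof; connectedness of $\Omega$ enters only to guarantee that the constants from the ball-Poincaré can be taken uniformly and that $\bar x\in\Omega$ can be chosen coherently with the containment $\overline\Omega\subseteq B$.
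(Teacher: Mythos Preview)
The paper does not give a proof of this proposition; it simply refers to \cite[Proposition 2.16]{MR4504133}. Your proposal is therefore an independent attempt, and its overall shape (density reduction, zero extension, ball-Poincar\'e from $(\mathrm{H}.3)$, control of the mean) is the natural one.

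There is, however, a genuine gap at Step~2. The hypotheses do not ensure the existence of a \emph{single} CC-ball $B\supseteq\overline\Omega$ with $cB\subseteq\Omega_0$: the open set $\Omega_0$ is only required to contain $\overline\Omega$, so if $c>1$ and $\Omega_0$ is a thin neighbourhood of $\overline\Omega$, no such ball fits. A secondary issue is that $(\mathrm{H}.2)$ grants doubling only for radii $r\le R$, so even when the large ball exists, the Hedberg-type bound in Step~3 (and the $L^p$-boundedness of the maximal function you invoke) needs additional justification once the radius of $cB$ exceeds $R$. The standard remedy---and the route taken in the literature to which the paper defers---is a chaining argument: cover $\overline\Omega$ by finitely many small balls $B_i$ with $cB_i\subseteq\Omega_0$ and radii below $R$, derive the $(p,p)$-Poincar\'e on each via your Step~3, and propagate the means along chains of overlapping balls. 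Connectedness of $\Omega$ is essential precisely for this chaining, not for the reason you give in your closing sentence.
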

\begin{theorem}[Rellich-Kondrachov]\label{relkon}
For $1\leq p<\infty$, $W^{1,p}_{X,0}(\Omega)$ compactly embeds in $L^p(\Omega)$.
\end{theorem}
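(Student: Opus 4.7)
The plan is to reduce the statement to the Fréchet--Kolmogorov compactness criterion in $L^p(\R^n)$, the input being an equi-integrability estimate for mean oscillations provided by the structural hypotheses $(\mathrm{H}.1)$--$(\mathrm{H}.3)$. So, let $(u_h)_h\subseteq W^{1,p}_{X,0}(\Omega)$ be a bounded sequence, say $\|u_h\|_{W^{1,p}_X(\Omega)}\leq M$. By definition of $W^{1,p}_{X,0}(\Omega)$ each $u_h$ is the $W^{1,p}_X$-limit of a sequence in $\mathbf{C}^\infty_c(\Omega)$, hence extending by $0$ outside $\Omega$ yields a function (still denoted $u_h$) in $W^{1,p}_X(\Omega_0)$ with $\|u_h\|_{W^{1,p}_X(\Omega_0)}\leq M$, compactly supported in $\overline{\Omega}$.

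Since $\overline{\Omega}\subseteq\Omega_0$ is compact and $d$ is continuous with respect to the Euclidean topology by $(\mathrm{H}.1)$, I would first fix $r_0\in(0,R)$ so small that $cB_d(\bar x,r)\subseteq\Omega_0$ for every $\bar x\in\overline{\Omega}$ and every $r\leq r_0$. Applying $(\mathrm{H}.3)$ to a Lipschitz approximant of $u_h$, then passing to the limit in $W^{1,p}_X$ (using dominated convergence against the Riesz-type kernel $d(x,y)/|B_d(x,d(x,y))|$), and finally combining with the doubling property $(\mathrm{H}.2)$ via the standard Franchi--Lu--Wheeden / Haj\l{}asz--Koskela machinery, I obtain a weak $(1,p)$-Poincaré inequality on metric balls: there exists $C_P=C_P(p,C_d,c,C)>0$ such that
\[
\int_{B_d(\bar x,r)}\bigl|u_h(y)-(u_h)_{B_d(\bar x,r)}\bigr|^p\,{\rm d}y\;\leq\;C_P\,r^p\int_{B_d(\bar x,cr)}|Xu_h(y)|^p\,{\rm d}y,
\]
for every $\bar x\in\overline{\Omega}$ and every $r\leq r_0$, uniformly in $h$.

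From here I would derive the $L^p$-equi-integrability of mean oscillations: integrating the inequality above in $\bar x$ against a Vitali-type covering argument, together with $(\mathrm{H}.2)$ and the uniform bound $\|Xu_h\|_{L^p(\Omega)}\leq M$, gives
\[
\lim_{r\to 0^+}\sup_{h\in\N}\int_{\R^n}\bigl|u_h(x)-(u_h)_{B_d(x,r)}\bigr|^p\,{\rm d}x=0.
\]
Since $d$ and the Euclidean distance on $\overline{\Omega}$ induce the same topology by $(\mathrm{H}.1)$, the Lipschitz continuity of the coefficients of $X$ together with a continuity modulus $\omega$ of $d$ yields inclusions $B(x,\omega(r))\subseteq B_d(x,r)\subseteq B(x,Lr)$ on $\overline{\Omega}$ for $r$ small. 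Replacing $d$-balls by Euclidean balls of comparable radii in the displayed equi-integrability estimate, and combining with the uniform $L^p$-bound and the equi-support of $(u_h)_h$ in $\overline{\Omega}$, I verify the two hypotheses (equi-boundedness in $L^p$ plus $L^p$-equicontinuity of translations) of Fréchet--Kolmogorov for the zero-extended sequence. Hence a subsequence converges strongly in $L^p(\R^n)$, and in particular in $L^p(\Omega)$, which is the claim.

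The main obstacle is really the promotion of the pointwise representation $(\mathrm{H}.3)$, stated for Lipschitz functions on $\overline{cB}$, to an inequality valid for all $u_h\in W^{1,p}_{X,0}(\Omega)$ and for all small radii with \emph{uniform} constants. This requires a careful approximation argument together with the self-improvement of $(1,1)$-type potential bounds to a $(1,p)$-Poincaré inequality via the doubling property $(\mathrm{H}.2)$; once this step is in place, the compactness follows by purely measure-theoretic means.
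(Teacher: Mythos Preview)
The paper does not actually prove this statement: it is quoted as a known result, with the sentence ``We refer the interested reader to \cite[Theorem 3.4]{MR1448000}'' immediately preceding it. So there is no in-paper proof to compare against.

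Your outline is essentially the standard strategy behind the cited reference (Franchi--Serapioni--Serra Cassano): use the representation formula $(\mathrm{H}.3)$ together with the doubling property $(\mathrm{H}.2)$ to control mean oscillations by a Riesz-type potential of $|Xu|$, then deduce $L^p$-equicontinuity of translations and apply Fr\'echet--Kolmogorov. The high-level logic is sound, and the two technical points you flag---promoting $(\mathrm{H}.3)$ from Lipschitz test functions to arbitrary $u\in W^{1,p}_{X,0}(\Omega)$, and passing from $d$-balls to Euclidean balls via $(\mathrm{H}.1)$ on the compact set $\overline{\Omega}$---are genuine but resolvable exactly along the lines you indicate (Meyers--Serrin approximation plus the Haj\l asz--Koskela self-improvement for the first, uniform continuity of $d$ on $\overline{\Omega}$ for the second). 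One small caution: your inclusion $B_d(x,r)\subseteq B(x,Lr)$ requires only boundedness of the coefficients on $\overline{\Omega}$, not their Lipschitz regularity; and the reverse inclusion $B(x,\omega(r))\subseteq B_d(x,r)$ comes from compactness and $(\mathrm{H}.1)$ alone, not from any Lipschitz data. With these clarifications, your sketch matches the literature's proof.
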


\section{Moving anisotropies}\label{Section3}


We fix a sequence of $X$-gradients $(X^h)_h$ with $m$ components $X^h = (X_1^h, \ldots, X_m^h)$, identified by a sequence of matrices $(\C^h)_h$ as in \Cref{section21}.
Throughout the paper, we assume that there exists a limit family of vector fields $X=(X_1,\ldots,X_m)$, identified by a Lipschitz continuous coefficient matrix $\C$, such that 
\begin{equation}\label{a0}\tag{A}
     \C^h\to \C\text{ uniformly on }\Om.
\end{equation}
We may sometimes use the index $\infty$ to refer to $X$. As already mentioned, \eqref{a0} will be sufficient to infer good approximation properties of anisotropic Sobolev functions. Instead, most of the $\Gamma$-compactness properties will be achieved for the following two classes of $X$-gradients.
\begin{definition}
    We say that $(X^h)_h\in\classuno$ if \eqref{a0} holds and 
    \begin{equation}\label{partshape21}\tag{S.1}
    \C^h=\left[ \begin{array}{c}
	\C  \\
	\D^h
\end{array}
\right]\qquad\text{and}\qquad \C=\left[ \begin{array}{c}
	\C  \\
	0
\end{array}
\right],
\end{equation}
with a natural abuse of notation in the second property. 
\end{definition}
\begin{definition}\label{classduedef}
We say that $(X^h)_h\in\classdue$ if \eqref{a0} holds and if 
\begin{equation}\label{ublip}\tag{S.2}
    (X^h)_h\text{ is uniformly bounded in }\Lip(\Om,M(m,n)).
\end{equation}
\end{definition}

 \begin{remark}
     As pointed out in the introduction, $\classuno$ models those frameworks where a sub-Riemannian structure induced by $X$ is approximated by a sequence of converging Riemannian structures. 
     Instead, $\classdue$ allows anisotropies with arbitrary shapes, provided the uniform bound \eqref{ublip} is satisfied.
 \end{remark}
\begin{remark}\label{banachalaogluremark}
   If $(X^h)_h\in\classdue$, 
  by \eqref{ublip} and up to a subsequence, we may always assume that \begin{equation*}
    \frac{\partial c^h_{j,i}}{\partial x_k}\to  \frac{\partial c_{j,i}}{\partial x_k}\quad\text{weakly$^\star$ in $L^\infty(\Om)$}
\end{equation*}
for any $j=1,\ldots,m$ and any $i,k=1,\ldots,n$.
\end{remark}
\begin{remark}
We stress that $\classuno\cap\classdue\neq \emptyset$. Nevertheless, neither $\classuno\subseteq\classdue$ nor $\classdue\subseteq\classuno$. First, it is clear that $\classdue\not\subseteq\classuno$. To see that $\classuno\not\subseteq\classdue$, consider the following instance. Let $\Om=(-1,1)^2\subseteq\R^2_{(x_1,x_2)}$. For any $h\in\N$, let $X^h=(X_1,X^h_2)$, where 
\begin{equation*}
    X_1=\frac{\partial}{\partial x_1},\qquad X_2=f_h(x_2)\frac{\partial}{\partial x_2} \qquad\text{and}\qquad    f_h(x_2) = \begin{cases}
    -\frac{1}{h} & \text{if } x_2 \in (-1, -\frac{1}{h^2}), \\
    h x_2 & \text{if } x_2 \in (-\frac{1}{h^2}, \frac{1}{h^2}), \\
    \frac{1}{h} & \text{if } x_2 \in (\frac{1}{h^2},1).
    \end{cases}
\end{equation*}
A simple computation reveals that $(X^h)_h\in\classuno\setminus\classdue$.
   
\end{remark}
In order to address compactness properties, we will need to further refine our assumptions.  Indeed, we will show that, when
\begin{equation}\label{rk}\tag{B}
      X\in\mathcal{M}=\mathcal{M}(\Om_0,C_d,c,C),
    \end{equation}
it is possible to derive uniform compactness properties and Poincaré inequalities.
To this aim, we introduce the following subclasses of $\classuno$ and $\classdue$.
\begin{definition}
For $i\in\{1,2\}$, we say that $(X^h)_h\in\classiraff$ if $(X^h)_h\in\classi$ and \eqref{rk} holds.
\end{definition}
In the rest of this section, we investigate some preliminary properties of moving anisotropies. 
We introduce the \emph{anisotropic Rayleigh quotients}. If $(X^h)_h$ satisfies \eqref{a0}, we let 
\begin{equation*}
    \mathcal R_h\coloneqq\inf\left\{\frac
    {\int_\Om|X^hu|^p\,{\rm d}x}{\int_\Om|u|^p\,{\rm d}x}\,:\,u\in W^{1,p}_{X_h,0}(\Om),\,u\neq 0\right\}  =\inf\left\{\frac
    {\int_\Om|X^hu|^p\,{\rm d}x}{\int_\Om|u|^p\,{\rm d}x}\,:\,u\in \mathbf{C}^\infty _c(\Om),\,u\neq 0\right\}  
\end{equation*}
for any $h\in\N$, and we let $\mathcal R$ be the Rayleigh quotient associated with $X$.
Above, the second equality follows by definition of $W^{1,p}_{X^h,0}(\Om)$.
\subsection{Preliminary functional properties}
The following two propositions describes the relations between the approximating Sobolev spaces and the limit one in $\classuno$ and $\classdue$, respectively.
%
\begin{proposition}\label{primacasofacile}
Let $(X^h)_h\in\classuno$. Let $1< p< \infty$. 
Then $W^{1,p}_{X^h}(\Om)\subseteq W^{1,p}_X(\Om)$. Precisely,
\begin{equation}\label{faciliprop}
   X^hu=(X_1u,\ldots,X_mu,X^h_{m+1}u,\ldots,X^h_n u)\qquad\text{and}\qquad Xu=(X_1u,\ldots,X_mu,0,\ldots,0). 
\end{equation}
for any $h\in\N$ and $u\in W^{1,p}_{X^h}(\Om)$. 
Moreover, if $(u_h)_h\subseteq W^{1,p}_{X^h}(\Om)$ satisfies $u_h\to u$ weakly in $L^p(\Om)$, then 
\begin{equation*}
    \|Xu\|_{L^p(\Om;\R^m)}\leq\liminf_{h\to\infty}\|X^hu_h\|_{L^p(\Om;\R^m)}.
\end{equation*}
\end{proposition}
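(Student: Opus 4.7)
My plan is to exploit the block structure of class $\classuno$ encoded in \eqref{partshape21}, which forces $X_j = X^h_j$ as differential operators on distributions for $j = 1, \ldots, m$. This lets one bypass any use of derivatives of the coefficient matrices, a crucial point because in $\classuno$ only the uniform convergence $\C^h \to \C$ from \eqref{a0} is available, with no information on the derivatives of $\C^h$.

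For the inclusion $W^{1,p}_{X^h}(\Om) \subseteq W^{1,p}_X(\Om)$ and the formula \eqref{faciliprop}: by \eqref{partshape21}, the first $m$ rows of $\C^h$ agree with those of $\C$ (viewed as the top block of the zero-padded limit matrix). Going back to \Cref{xgraddef}, this immediately yields $X_j u = X^h_j u$ as distributions for every $u$ and every $j = 1, \ldots, m$, while the remaining components of $Xu$ vanish identically. If $u \in W^{1,p}_{X^h}(\Om)$, each $X^h_j u$ is represented by an $L^p$ function, hence $Xu \in L^p(\Om;\R^m)$ and \eqref{faciliprop} follows.

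For the lower semicontinuity inequality, I assume $\ell := \liminf_{h} \|X^h u_h\|_{L^p(\Om;\R^m)} < \infty$ (otherwise there is nothing to prove) and, after extracting a subsequence, treat the liminf as a limit. From \eqref{faciliprop} one has $|X^h u_h| \geq |Xu_h|$ almost everywhere, which integrates to $\|Xu_h\|_{L^p(\Om;\R^m)} \leq \|X^h u_h\|_{L^p(\Om;\R^m)}$, so $(Xu_h)_h$ is bounded in the reflexive space $L^p(\Om;\R^m)$. Up to a further subsequence, Banach--Alaoglu yields $Xu_h \rightharpoonup V$ weakly in $L^p(\Om;\R^m)$ for some $V$. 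I identify $V$ by testing against $\varphi \in \mathbf{C}^\infty_c(\Om;\R^m)$ via the distributional definition in \Cref{xgraddef}:
\[
\int_\Om \langle Xu_h, \varphi \rangle_{\R^m}\, {\rm d}x = -\int_\Om u_h\, \divv(\C^T\varphi)\, {\rm d}x \longrightarrow -\int_\Om u\, \divv(\C^T\varphi)\, {\rm d}x = \int_\Om \langle Xu, \varphi \rangle_{\R^m}\, {\rm d}x,
\]
where the passage to the limit uses the weak convergence $u_h \rightharpoonup u$ in $L^p(\Om)$ together with $\divv(\C^T\varphi) \in L^\infty(\Om) \subseteq L^{p'}(\Om)$, thanks to the Lipschitz regularity of $\C$. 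Uniqueness of the weak limit gives $V = Xu$, and weak lower semicontinuity of the $L^p$-norm finally yields $\|Xu\|_{L^p(\Om;\R^m)} \leq \liminf_h \|Xu_h\|_{L^p(\Om;\R^m)} \leq \ell$.

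The proof is overall technically light. The only mildly delicate point I anticipate is the identification $V = Xu$, which reduces here to a routine distributional passage to the limit because $X$ is fixed and $\divv(\C^T\varphi)$ is a legitimate $L^{p'}$ test object against the weakly convergent sequence $(u_h)$. The rigid block structure of $\classuno$ is precisely what avoids any need to pass to the limit in products $c^h_{j,i}\, \partial_i u_h$; in particular, no use of the weak$^\star$ convergence of the derivatives of $\C^h$ recorded in \Cref{banachalaogluremark} (which would be required to handle the analogous statement in $\classdue$) enters the argument.
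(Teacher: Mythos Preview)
Your proof is correct and follows essentially the same approach as the paper's: exploit the block structure \eqref{partshape21} to get \eqref{faciliprop} via the distributional definition, then for the lower semicontinuity use the pointwise inequality $|Xu_h|\leq|X^hu_h|$ to bound $(Xu_h)_h$ in $L^p$, extract a weakly convergent subsequence by reflexivity, identify its limit as $Xu$, and conclude by weak lower semicontinuity of the norm. The paper's argument is identical but terser, leaving the identification step (which you spell out via $\divv(\C^T\varphi)\in L^{p'}$ and the weak convergence $u_h\rightharpoonup u$) implicit.
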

\begin{proof}
Let $\varphi=(\varphi_1,\varphi_2)\in \mathbf{C}^\infty _c(\Om;\R^{m}\times\R^{n-m})$. Then
\begin{equation*}
\begin{split}
    -\int_\Om u\divv(\C^T\varphi)\,{\rm d}x&=-\int_\Om u\divv((\C^h)^T(\varphi_1, 0))\,{\rm d}x
    =\int_\Om\langle(X^h_1u,\ldots,X^h_mu,0,\ldots,0),\varphi\rangle_{\R^m}\,{\rm d}x,
\end{split}
\end{equation*}
whence the first claims follow. Let $(u_h)_h\subseteq W^{1,p}_{X^h}(\Om)$ be such that $u_h\to u$ weakly in $L^p(\Om)$. Assume that $\liminf_{h\to\infty}\|X^hu_h\|_{L^p(\Om;\R^m)}<\infty$. By reflexivity and \eqref{faciliprop}, there exists $v\in L^p(\Om;\R^m)$ such that, up to a subsequence, $Xu_h\to v$ weakly in $L^p(\Om;\R^m)$.
Then $u\in W^{1,p}_X(\Om)$ and $v=Xu$. The thesis follows by the weak lower semicontinuity of the $L^p$-norm.
\end{proof}
\begin{proposition}\label{limitinsobolev}
Let $(X^h)_h\in\classdue$. Let $1<p<\infty$.  
Let $(u_h)_h\subseteq W^{1,p}_{X^h}(\Om)$.
\begin{enumerate}
    \item If $u_h\to u$ weakly in $L^p(\Om)$ and $X^h u_h\to v$ weakly in $L^p(\Om;\R^m)$, then $ u\in W^{1,p}_X(\Om)$.
    \item If in addition $u_h\to u$ strongly in $L^p(\Om)$, then $v=Xu$ and 
  \begin{equation*}
    \|Xu\|_{L^p(\Om;\R^m)}\leq\liminf_{h\to\infty}\|X^hu_h\|_{L^p(\Om;\R^m)}.
\end{equation*}
\end{enumerate}

\end{proposition}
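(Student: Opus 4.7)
The unifying strategy is to test the distribution $Xu$ against $\varphi \in \mathbf{C}^\infty_c(\Om;\R^m)$ using the defining identity
\[
Xu(\varphi) = -\int_\Om u\,\divv(\C^T \varphi)\,{\rm d}x,
\]
together with its analogue for $X^h u_h$ obtained by replacing $\C$ with $\C^h$.

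For part (1) I would prove the dual estimate $|Xu(\varphi)| \leq C\,\|\varphi\|_{L^{p'}(\Om;\R^m)}$, since by density of $\mathbf{C}^\infty_c(\Om;\R^m)$ in $L^{p'}(\Om;\R^m)$ this guarantees that $Xu$ extends to an element of $L^p(\Om;\R^m)$, whence $u \in W^{1,p}_X(\Om)$. First, the weak convergence $u_h \to u$ in $L^p$, tested against the fixed $L^\infty$ function $\divv(\C^T\varphi)$, gives
\[
Xu(\varphi) = -\lim_{h\to\infty} \int_\Om u_h\,\divv(\C^T\varphi)\,{\rm d}x.
\]
I would then add and subtract $\divv((\C^h)^T\varphi)$ inside the integral. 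The contribution with $\C^h$ produces $\int_\Om \langle X^h u_h,\varphi\rangle_{\R^m}\,{\rm d}x$, which is controlled by $\|X^h u_h\|_{L^p}\|\varphi\|_{L^{p'}}$ and remains uniformly bounded thanks to the weak $L^p$ convergence of $X^h u_h$. The remainder splits, via the Leibniz rule on Lipschitz functions, as
\[
\sum_{i,j}\left[\left(\frac{\partial c_{j,i}}{\partial x_i} - \frac{\partial c^h_{j,i}}{\partial x_i}\right)\varphi_j + (c_{j,i}-c^h_{j,i})\frac{\partial \varphi_j}{\partial x_i}\right].
\]
When paired with $u_h$, the first family of terms is estimated by $\|u_h\|_{L^p}\|\partial c^h_{j,i}/\partial x_i - \partial c_{j,i}/\partial x_i\|_{L^\infty}\|\varphi_j\|_{L^{p'}}$, which is uniformly bounded by $C'\,\|\varphi\|_{L^{p'}}$ via the uniform Lipschitz control on $\C^h$ guaranteed by $(X^h)_h \in \classdue$; the second family is even infinitesimal, since $\|\C^h-\C\|_{L^\infty}\to 0$. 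Summing these bounds yields the desired estimate.

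For part (2) the same decomposition is employed, but now the remainder actually vanishes in the limit: the term involving $c_{j,i} - c^h_{j,i}$ does so by uniform convergence of $\C^h$, while the term involving $\partial c_{j,i}/\partial x_i - \partial c^h_{j,i}/\partial x_i$ vanishes because $u_h \varphi_j \to u\varphi_j$ strongly in $L^1$ (by the added strong $L^p$ convergence of $u_h$) and pairs against a sequence converging weakly$^\star$ to $0$ in $L^\infty$ by \cref{banachalaogluremark}, up to extracting a subsequence. This identifies $Xu(\varphi) = \int_\Om \langle v,\varphi\rangle_{\R^m}\,{\rm d}x$ along a subsequence, and by uniqueness of weak $L^p$ limits the identity $v = Xu$ then holds for the whole sequence. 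The lower semicontinuity inequality $\|Xu\|_{L^p} \leq \liminf_h \|X^h u_h\|_{L^p}$ follows immediately from the weak lower semicontinuity of the $L^p$ norm.

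The main obstacle lies in part (1): both $u_h$ and the coefficient derivatives $\partial c^h_{j,i}/\partial x_i$ converge only weakly (respectively weakly$^\star$), so their product need not converge, and in fact one cannot expect to identify $Xu$ with $v$ without extra assumptions. The crucial observation is that no such identification is required: only a uniform \emph{bound} on the remainder is needed to produce the dual $L^{p'}$ estimate on $Xu$. This is precisely what $(X^h)_h \in \classdue$ provides through the uniform Lipschitz control on $\C^h$, without which the argument would break down.
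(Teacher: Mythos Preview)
Your proof is correct and follows essentially the same route as the paper's: both arguments test $Xu$ against $\varphi\in\mathbf{C}^\infty_c(\Om;\R^m)$, relate $-\int_\Om u\,\divv(\C^T\varphi)$ to $\int_\Om\langle X^hu_h,\varphi\rangle$ via the coefficient matrices, and control the resulting error terms using the uniform Lipschitz bound from \eqref{ublip} for part~(1) and the weak$^\star$ convergence of \Cref{banachalaogluremark} combined with strong $L^p$ convergence for part~(2). The only cosmetic difference is that the paper first uses $u_hc^h_{j,i}\to uc_{j,i}$ weakly in $L^p$ (which absorbs your cross-term $(c_{j,i}-c^h_{j,i})\partial\varphi_j/\partial x_i$ directly), so its final identity contains only the two derivative-of-coefficient terms, whereas you carry that extra cross-term explicitly and then observe it vanishes in the limit; both are equivalent. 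One small wording point: in part~(2) the passage to a subsequence is needed only to run the computation, and since the resulting identity $Xu(\varphi)=\int_\Om\langle v,\varphi\rangle$ involves only the fixed objects $Xu$ and $v$, it holds outright---no appeal to uniqueness of weak limits is required.
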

\begin{proof}
Let $p'$ the H\"older-conjugate exponent of $p$. Let $(u_h)_h\subseteq W^{1,p}_{X^h}(\Om)$ be such that $u_h\to u$ weakly in $L^p(\Om)$.
By 
\eqref{ublip}, 
\begin{equation}\label{weaklyhps2}
    u_hc^h_{j,i}\to u c_{j,i}\text{ weakly in }L^p(\Om)
\end{equation}
for any $i=1,\ldots,n$ and any $j=1,\dots, m$. Fix $\varphi\in \mathbf{C}^\infty _c(\Om;\R^m)$. Then
\begin{equation*}
\begin{split}
   -\int_\Om u &\diver (\C^T\varphi)\,{\rm d} x =-\int_\Om u\sum_{i=1}^n\sum_{j=1}^mc_{j,i}\frac{\partial \varphi_j}{\partial x_i}\,{\rm d}x-\int_\Om u\sum_{i=1}^n\sum_{j=1}^m\frac{\partial c_{j,i}}{\partial x_i}\varphi_j\,{\rm d}x\\
    &\overset{\eqref{weaklyhps2}}{=}-\lim_{h\to\infty}\int_\Om u_h\sum_{i=1}^n\sum_{j=1}^mc^h_{j,i}\frac{\partial \varphi_j}{\partial x_i}\,{\rm d}x-\int_\Om u\sum_{i=1}^n\sum_{j=1}^m\frac{\partial c_{j,i}}{\partial x_i}\varphi_j\,{\rm d}x\\    &=\lim_{h\to\infty}\left(-\int_\Om u_h\divv((\C^h)^T\varphi)\,{\rm d}x+\int_\Om u_h\sum_{i=1}^n\sum_{j=1}^m\frac{\partial c^h_{j,i}}{\partial x_i}\varphi_j\,{\rm d}x\right)-\int_\Om u\sum_{i=1}^n\sum_{j=1}^m\frac{\partial c_{j,i}}{\partial x_i}\varphi_j\,{\rm d}x\\
    &= \lim_{h\to\infty}\left(\int_\Om \langle X^hu_h,\varphi\rangle_{\R^m}\,{\rm d}x+\int_\Om u_h\sum_{i=1}^n\sum_{j=1}^m\frac{\partial c^h_{j,i}}{\partial x_i}\varphi_j\,{\rm d}x\right)-\int_\Om u\sum_{i=1}^n\sum_{j=1}^m\frac{\partial c_{j,i}}{\partial x_i}\varphi_j\,{\rm d}x\\
    &= \int_\Om \langle v,\varphi\rangle_{\R^m}\,{\rm d}x+\lim_{h\to\infty}\int_\Om u_h\sum_{i=1}^n\sum_{j=1}^m\frac{\partial c^h_{j,i}}{\partial x_i}\varphi_j\,{\rm d}x-\int_\Om u\sum_{i=1}^n\sum_{j=1}^m\frac{\partial c_{j,i}}{\partial x_i}\varphi_j\,{\rm d}x.
\end{split}
\end{equation*}
By \eqref{ublip} and H\"older's inequality, if $\|\varphi\|_{L^{p'}(\Om;\R^m)}\leq 1$ the last line 
is bounded above independently of $\varphi.$ In particular, $u\in W^{1,p}_X(\Om)$. If in addition $u_h\to u$ strongly in $L^p(\Om)$, then the last two terms in the last line 
simplify by \eqref{ublip} and \Cref{banachalaogluremark}. Hence $v=Xu$ by definition. The thesis follows by the lower semicontinuity of the $L^p$-norm with respect to weak convergence.
\end{proof}
%
Our next result states the convergence of Rayleigh quotients when $(X^h)_h\in\classuno$. As we will show in \Cref{convrayleight2}, the same result holds in $\classdue$ under the additional assumption \eqref{rk}.
\begin{proposition}\label{convrayleighs1}
 Let $(X^h)_h\in\classuno$. Let $1<p<\infty$. Then
\begin{equation}\label{summa}
   \mathcal R\leq\mathcal R_h\text{ for any }h\in\N\qquad\text{and}\qquad\lim_{h\to\infty}\mathcal R_h=\mathcal R.
\end{equation}
\end{proposition}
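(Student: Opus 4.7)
The plan is to split the statement into two inequalities. The first, $\mathcal{R} \leq \mathcal{R}_h$ for every $h$, will follow from a pointwise monotonicity of $|X^h u|$ against $|Xu|$ provided by the block structure \eqref{partshape21}. The second, $\limsup_h \mathcal{R}_h \leq \mathcal{R}$, will follow by testing $\mathcal{R}_h$ against an almost-optimal competitor for $\mathcal{R}$ and exploiting the uniform convergence \eqref{a0}; combined with the first inequality, this delivers the limit.

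For the monotonicity inequality, I would observe that $\mathbf{C}^\infty_c(\Om)$ sits inside $W^{1,p}_{X^h}(\Om) \cap W^{1,p}_X(\Om)$ for every $h$, so that by \Cref{primacasofacile} (equivalently, by \eqref{partshape21} together with \eqref{euclrapprgrad}),
\[
|X^h u(x)|^2 \;=\; |X u(x)|^2 \,+\, \sum_{j=m+1}^{n} |X^h_j u(x)|^2 \;\geq\; |X u(x)|^2
\]
for a.e.\ $x \in \Om$ and every $u \in \mathbf{C}^\infty_c(\Om)$. Raising to the power $p/2$, integrating, and dividing by $\int_\Om |u|^p\,{\rm d}x$ shows that every test ratio entering the definition of $\mathcal{R}_h$ dominates the corresponding one for $\mathcal{R}$. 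Taking the infimum over $u \in \mathbf{C}^\infty_c(\Om) \setminus \{0\}$ gives $\mathcal{R} \leq \mathcal{R}_h$ for each $h$, and in particular $\mathcal{R} \leq \liminf_h \mathcal{R}_h$.

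For the reverse asymptotic inequality, I would use that \eqref{partshape21} combined with \eqref{a0} forces the additional rows $\D^h$ to vanish uniformly on $\Om$. Fix $\varepsilon > 0$ and pick $u_\varepsilon \in \mathbf{C}^\infty_c(\Om) \setminus \{0\}$ with $\int_\Om |X u_\varepsilon|^p\,{\rm d}x \leq (\mathcal{R} + \varepsilon) \int_\Om |u_\varepsilon|^p\,{\rm d}x$. For this fixed smooth function, the Euclidean representation \eqref{euclrapprgrad} and the uniform convergence $\C^h \to \C$ give $X^h u_\varepsilon \to X u_\varepsilon$ uniformly on $\Om$, hence in $L^p(\Om;\R^n)$. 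Plugging $u_\varepsilon$ into the variational characterization of $\mathcal{R}_h$ and passing to the limit superior yields $\limsup_h \mathcal{R}_h \leq \mathcal{R} + \varepsilon$; sending $\varepsilon \to 0^+$ closes the gap with the first inequality. I do not expect any serious obstacle here, since the argument is purely variational; the only subtlety is simply to verify that \eqref{partshape21} really delivers the pointwise inequality above, which is exactly the content of \Cref{primacasofacile}.
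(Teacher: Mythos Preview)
Your proposal is correct and follows essentially the same route as the paper: you use the block structure of $\classuno$ (via \Cref{primacasofacile}) to get the pointwise inequality $|X^hu|\ge |Xu|$ and hence $\mathcal R\le\mathcal R_h$, and then test $\mathcal R_h$ on a fixed smooth almost-minimizer and pass to the limit using the uniform convergence \eqref{a0}. The only cosmetic difference is that the paper phrases the $\limsup$ step by fixing an arbitrary $u\in\mathbf{C}^\infty_c(\Om)\setminus\{0\}$ and then taking the infimum, rather than introducing an $\varepsilon$-almost-minimizer; the two formulations are equivalent.
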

\begin{proof}
Fix $u\in \mathbf{C}^\infty _c(\Om)$ such that $u\neq 0$. Then \Cref{primacasofacile} implies that
    \begin{equation*}
        \mathcal R\leq \frac{\int_\Om|Xu|^p\,{\rm d}x}{\int_\Om|u|^p}\leq \frac{\int_\Om|X^hu|^p\,{\rm d}x}{\int_\Om|u|^p},
    \end{equation*}
whence the first property of \eqref{summa} follows.
 On the other hand, fix $u\in \mathbf{C}^\infty _c(\Om)$ such that $u\neq 0$. By \eqref{a0},
  $X^hu\to Xu$ uniformly on $\Om$, whence
    \begin{equation*}
        \limsup_{h\to\infty}\mathcal R_h\leq\limsup_{h\to\infty}\frac{\int_\Om|X^hu|^p\,{\rm d}x}{\int_\Om|u|^p\,{\rm d}x}=\frac{\int_\Om|Xu|^p\,{\rm d}x}{\int_\Om|u|^p\,{\rm d}x}.
    \end{equation*}
    Being $u\neq 0$ arbitrarily chosen in $\mathbf{C}^\infty _c(\Om)$, \eqref{summa} follows.
\end{proof}
Finally, we describe the relation between the approximating affine Sobolev spaces and the limit one under the sole assumption \eqref{a0}. 
\begin{proposition}\label{stranachiusura}
Let $(X^h)_h$ satisfy \eqref{a0}.
    Let $1\leq p<\infty$. Let $(\varphi_h)_h\subseteq W^{1,p}_{X^h}(\Om)$, and let $\varphi\in W^{1,p}_{X}(\Om)$. Assume that
    \begin{equation}\label{quasichiusuradebolehp}
        \varphi_h\to\varphi\text{ weakly in }L^p(\Om)\qquad\text{and}\qquad X^h\varphi_h\to X\varphi\text{ weakly in }L^p(\Om;\R^m).
    \end{equation}
    Let $(u_h)_h\subseteq W^{1,p}_{X^h,\varphi_h}(\Om)$, and let $u\in W^{1,p}_{X}(\Om)$. 
    Assume that
    \begin{equation}\label{quasichiusuradebolehpsuu}
        u_h\to u\text{ weakly in }L^p(\Om)\qquad\text{and}\qquad X^hu_h\to Xu\text{ weakly in }L^p(\Om;\R^m).
    \end{equation}
    Then $u\in W^{1,p}_{X,\varphi}(\Om)$.
\end{proposition}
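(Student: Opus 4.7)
Set $w_h := u_h - \varphi_h$ and $w := u - \varphi$. Since $u_h \in W^{1,p}_{X^h, \varphi_h}(\Om)$, we have $w_h \in W^{1,p}_{X^h, 0}(\Om)$, and $w \in W^{1,p}_X(\Om)$ by bilinearity. Combining \eqref{quasichiusuradebolehp} with \eqref{quasichiusuradebolehpsuu} yields
\[ w_h \to w \text{ weakly in } L^p(\Om) \quad \text{and} \quad X^h w_h \to X w \text{ weakly in } L^p(\Om;\R^m). \]
The aim is to show $w \in W^{1,p}_{X, 0}(\Om)$, which is equivalent to $u \in W^{1,p}_{X, \varphi}(\Om)$.

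The strategy is to produce a sequence in $\mathbf{C}^\infty_c(\Om)$ converging to $w$ weakly in $W^{1,p}_X(\Om)$ and then to invoke the weak closedness (for $1 < p < \infty$) of $W^{1,p}_{X, 0}(\Om)$ as a norm-closed subspace of the reflexive space $W^{1,p}_X(\Om)$. By the very definition of $W^{1,p}_{X^h, 0}(\Om)$ as the $W^{1,p}_{X^h}$-closure of $\mathbf{C}^\infty_c(\Om)$, for each $h$ one may select $\psi_h \in \mathbf{C}^\infty_c(\Om)$ with
\[ \|\psi_h - w_h\|_{L^p(\Om)} + \|X^h\psi_h - X^h w_h\|_{L^p(\Om; \R^m)} < \tfrac{1}{h}, \]
which immediately yields $\psi_h \to w$ weakly in $L^p(\Om)$ and $X^h \psi_h \to X w$ weakly in $L^p(\Om; \R^m)$.

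The crucial step is to upgrade this to $X\psi_h \to Xw$ weakly in $L^p(\Om; \R^m)$. For smooth $\psi_h$, the pointwise identity $X\psi_h - X^h \psi_h = (\C - \C^h)D\psi_h$ together with \eqref{a0} bounds the $L^p$-norm of this difference by $\|\C - \C^h\|_\infty \cdot \|D\psi_h\|_{L^p(\Om)}$. The main obstacle is that an approximation in $W^{1,p}_{X^h}$-norm does not, by itself, control the full Euclidean gradient $\|D\psi_h\|_{L^p}$. To surmount this I would construct $\psi_h$ as a suitably truncated standard mollification $w_h * \rho_{\epsilon_h}$, tuning $\epsilon_h \to 0$ slowly (e.g. $\epsilon_h = \|\C - \C^h\|_\infty^{1/2}$) so that $\|D\psi_h\|_{L^p} \lesssim \epsilon_h^{-1}$ is asymptotically dominated by $\|\C - \C^h\|_\infty^{-1}$; the quantitative control on the $W^{1,p}_{X^h}$-approximation error then reduces to a uniform Meyers-Serrin-type commutator estimate along the sequence of anisotropies, exploiting the uniform boundedness of $\C^h$ coming from \eqref{a0}.

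Once $\psi_h \to w$ weakly in $W^{1,p}_X(\Om)$ is established, weak closedness of $W^{1,p}_{X, 0}(\Om)$ in $W^{1,p}_X(\Om)$ forces $w \in W^{1,p}_{X, 0}(\Om)$, hence $u \in W^{1,p}_{X, \varphi}(\Om)$. For the borderline case $p = 1$, reflexivity fails and one replaces Banach-Alaoglu compactness by a Mazur convex-combination argument producing, out of $(\psi_h)_h$, a sequence of finite convex combinations strongly convergent to $w$ in $W^{1,1}_X(\Om)$.
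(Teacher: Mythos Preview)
Your overall strategy coincides with the paper's: reduce to $w_h=u_h-\varphi_h\in W^{1,p}_{X^h,0}(\Om)$, manufacture a sequence in $\mathbf{C}^\infty_c(\Om)$ converging weakly to $w=u-\varphi$ in $W^{1,p}_X(\Om)$, and conclude via weak closedness of $W^{1,p}_{X,0}(\Om)$ (the paper quotes a Mazur-type argument from \cite{Capogna2024}). Your idea of mollifying at a scale tied to $\|\C^h-\C\|_\infty$ is precisely the paper's device --- they work with a scale $\sigma(h)$ satisfying $\|\C^h-\C\|_\infty\le\sigma(h)^{n+2}$, and your cleaner Young-inequality bound $\|D(w_h*\rho_{\epsilon_h})\|_{L^p}\lesssim\epsilon_h^{-1}\|w_h\|_{L^p}$ serves the same purpose as their Jensen-based estimate.

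There is, however, a genuine gap in your commutator step. When you assert that the $W^{1,p}_{X^h}$-approximation error ``reduces to a uniform Meyers--Serrin-type commutator estimate \ldots\ exploiting the uniform boundedness of $\C^h$'', you are conflating $L^\infty$ bounds with Lipschitz bounds. The Friedrichs commutator $X^h(w_h*\rho_\epsilon)-(X^hw_h)*\rho_\epsilon$ is controlled in $L^p$ by a constant involving $\Lip(\C^h)$, not $\|\C^h\|_\infty$, and under \eqref{a0} alone there is no uniform Lipschitz bound on $(\C^h)_h$. Without it, mollifying $w_h$ directly may fail to keep $(X^h\psi_h)_h$ bounded in $L^p$, so $(\psi_h)_h$ need not be bounded in $W^{1,p}_X(\Om)$ and the weak-closedness argument collapses. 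The paper's proof avoids this by an extra step you skip: it first approximates each $w_h$ by a smooth $v_h\in\mathbf{C}^\infty_c(\Om)$ with $\|v_h-w_h\|_{W^{1,p}_{X^h}}\le 1/h$ (this is free from the definition of $W^{1,p}_{X^h,0}$), and only then mollifies $v_h$ to obtain $\tilde v_h=J_h*v_h$. Because $v_h$ is already smooth, $X^h\tilde v_h-X^hv_h=\C^h(Dv_h*J_h-Dv_h)$, and here the mere $L^\infty$ bound $\sup_h\|\C^h\|_\infty<\infty$ from \eqref{a0} suffices; combined with the analogue of your $\|\C-\C^h\|_\infty\|D\tilde v_h\|_{L^p}$ estimate (their \eqref{stimaduecardue}), this yields $(\tilde v_h)_h$ bounded in $W^{1,p}_X(\Om)$ with $X\tilde v_h\to Xw$ weakly.
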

\begin{proof}
    We postpone the proof until the end of \Cref{Section4.2}. 
\end{proof}
\section{Uniform approximation by smooth functions}\label{Section4}
In this section we establish the counterpart of \Cref{MeySer}, replacing a fixed $X$-gradient with a sequence $(X^h)_h$ satisfying the sole assumption \eqref{a0}. In particular, our results will hold both in $\classuno$ and in $\classdue$. Moreover, we provide a similar characterization for affine Sobolev spaces, and finally we extend the previous considerations to the anisotropic $BV$ setting. 
Fix a sequence $(X^h)_h$ satisfying \eqref{a0}.
In particular, there exists a function $\sigma:[0,\infty)\longrightarrow(0,\infty)$ such that
\begin{equation}\label{modulocontmatrici}
   \sup\{|(\C^h(x)-\C(x))\xi|\,:\,x\in\Om,\,\xi\in\R^n,\,|\xi|=1\}\leq \sigma(h)^{n+2}
\end{equation}
for any $h\in\N$ and   $\lim_{h\to\infty}\sigma(h)=0.$
Exploiting \eqref{modulocontmatrici}, we introduce a particular sequence of mollifiers which is tailored to our setting. Precisely, Let $J\in \mathbf{C}^\infty _c(\R^n)$ be a standard spherically symmetric mollifier. 
We define the family of mollifiers $(J_h)_h$ by letting
\begin{equation}\label{mollificatorigiusti}
    J_h(x)=\frac{1}{\sigma(h)^n}J\left(\frac{x}{\sigma(h)}\right)
\end{equation}
for any $x\in \R^n$ and any $h\in\N$. Clearly, there is a constant $C(J,n)>0$, depending only on $J$ and $n$, such that
\begin{equation}\label{gradmollifgiusto}
    |D J_h(x)|\leq\frac{C(J,n)}{\sigma(h)^{n+1}}
\end{equation}
for any $x\in \R^n$ and any $h\in\N$. 
\subsection{A uniform Meyers-Serrin type approximation}
The following lemma constitute the technical core of this section.
\begin{lemma}\label{premeyers}
Let $(X^h)_h$ satisfy \eqref{a0}. Let $1\leq p<\infty$, $u\in L^p(\Om)$ and $(J_h)_h$ be as in \eqref{mollificatorigiusti}. Then
    \begin{equation*}
        \lim_{h\to\infty}\|X^h(J_h\ast u)-X(J_h\ast u)\|_{L^p(\Om;\R^m)}=0.
    \end{equation*}
\end{lemma}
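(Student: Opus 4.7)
The plan is to reduce the estimate to a pointwise bound controlled by the two quantitative ingredients \eqref{modulocontmatrici} and \eqref{gradmollifgiusto}. Since $J_h\in\mathbf{C}^\infty_c(\R^n)$ and, after the zero-extension convention, $u\in L^p(\R^n)$, the convolution $J_h\ast u$ is smooth on $\R^n$ and in particular belongs to $W^{1,p}(\Om)$. Consequently, the Euclidean representation \eqref{euclrapprgrad} applies to both $X^h(J_h\ast u)$ and $X(J_h\ast u)$, yielding the pointwise identity
\[
X^h(J_h\ast u)(x)-X(J_h\ast u)(x)=\bigl(\C^h(x)-\C(x)\bigr)\,D(J_h\ast u)(x)\qquad\text{for a.e. }x\in\Om.
\]

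The first step is to insert \eqref{modulocontmatrici}, which turns the above identity into the pointwise bound
\[
\bigl|X^h(J_h\ast u)(x)-X(J_h\ast u)(x)\bigr|\le \sigma(h)^{n+2}\,|D(J_h\ast u)(x)|.
\]
The second step is to control $\|D(J_h\ast u)\|_{L^p(\Om;\R^n)}$. I would write $D(J_h\ast u)=(DJ_h)\ast u$ and apply Young's convolution inequality on $\R^n$, so that it suffices to bound $\|DJ_h\|_{L^1(\R^n)}$. Combining \eqref{gradmollifgiusto} with the fact that $\supp(J_h)\subseteq B(0,R\sigma(h))$ (inherited from the compact support of $J$), one obtains
\[
\|DJ_h\|_{L^1(\R^n)}\le \frac{C(J,n)}{\sigma(h)^{n+1}}\,|B(0,R\sigma(h))|\le \frac{C}{\sigma(h)}
\]
for a constant $C$ independent of $h$. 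Multiplying the two estimates gives an overall bound
\[
\|X^h(J_h\ast u)-X(J_h\ast u)\|_{L^p(\Om;\R^m)}\le C\,\sigma(h)^{n+1}\|u\|_{L^p(\Om)},
\]
which vanishes as $h\to\infty$ since $\sigma(h)\to 0$ and $n\ge 1$.

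There is no substantial obstacle in this argument: the entire content of the lemma is a book-keeping of the scalings, and the routine verifications involve only the Euclidean chain rule, Young's inequality and a change of variables. The subtle point worth flagging is the precise reason the estimate closes: the rate $\sigma(h)^{n+2}$ in \eqref{modulocontmatrici} has been calibrated against the blow-up rate $\sigma(h)^{-(n+1)}$ of $\|DJ_h\|_{L^1(\R^n)}$ in \eqref{gradmollifgiusto} so that a single factor of $\sigma(h)$ is left to spare at the end. This calibration is exactly what makes the non-standard mollification \eqref{mollificatorigiusti} adapted to the moving-anisotropy setting, as opposed to a standard mollifier at a fixed scale.
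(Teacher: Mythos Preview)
Your argument is correct and follows essentially the same route as the paper: both proofs start from the pointwise identity $(\C^h-\C)D(J_h\ast u)$, apply \eqref{modulocontmatrici}, and then control $D(J_h\ast u)=(DJ_h)\ast u$ via \eqref{gradmollifgiusto}. The only difference is that you bound $\|(DJ_h)\ast u\|_{L^p}$ with Young's convolution inequality (getting $\|DJ_h\|_{L^1}\le C/\sigma(h)$ and hence a clean $\sigma(h)^{n+1}$), whereas the paper expands the convolution explicitly and uses Jensen's inequality, arriving at the exponent $n(p-1)+p$ on the $p$-th power of the norm; your version is slightly more streamlined but not materially different.
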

\begin{proof}
Notice that, in view of \eqref{modulocontmatrici}, \eqref{gradmollifgiusto} and Jensen's inequality,
\begin{equation*}
\begin{split}
    \|X^h(J_h\ast u)-X&(J_h\ast u)\|^p_{L^p(\Om;\R^m)}=\int_\Om|X^h(J_h\ast u)-X(J_h\ast u)|^p\,{\rm d}x\\
    &=\int_\Om|(\C^h-\C) D(J_h\ast u)|^p\,{\rm d}x\\
    &\overset{\eqref{modulocontmatrici}}{\leq }\sigma(h)^{p(n+2)}\int_\Om|D(J_h\ast u)|^p\,{\rm d}x\\
    &=\sigma(h)^{p(n+2)}\int_\Om|(D J_h\ast u)|^p\,{\rm d}x\\
    &=\sigma(h)^{p(n+2)}\int_\Om\left|\int_{B(x,\sigma(h))}D J_h(x-y)u(y)\,{\rm d}y\right|^p\,{\rm d}x\\
    &=(\omega_n)^p\sigma(h)^{p(2n+2)}\int_\Om\left|\ave_{B(x,\sigma(h))}D J_h(x-y)u(y)\,{\rm d}y\right|^p\,{\rm d}x\\
    &\leq(\omega_n)^{p-1}\sigma(h)^{p(2n+2)-n}\int_\Om\int_{B(x,\sigma(h))}|D J_h(x-y)u(y)|^p\,{\rm d}y\,{\rm d}x\\
    &\overset{\eqref{gradmollifgiusto}}{\leq} C(J,n)^p(\omega_n)^{p-1}\sigma(h)^{p(2n+2)-n-p(n+1)}\int_\Om\int_{B(x,\sigma(h))}|u(y)|^p\,{\rm d}y\,{\rm d}x\\
    &\leq|\Om|\, C(J,n)^p(\omega_n)^{p-1}\sigma(h)^{p(2n+2)-n-p(n+1)}\|u\|^p_{L^p(\Om)},\\
\end{split}
\end{equation*}
where $\omega_n$ is the Lebesgue measure of the Euclidean unit ball of $\R^n$. Since
\begin{equation}\label{numeripen}
    p(2n+2)-n-p(n+1)=n(p-1)+p\geq p>0,
\end{equation}
the thesis follows.
\end{proof}
In view of \Cref{premeyers}, we prove the following Meyers-Serrin type approximation result.
\begin{theorem}\label{meyser}
Let $(X^h)_h$ satisfy \eqref{a0}. Let $1\leq p<\infty$. Let $u\in W^{1,p}_X(\Om)$. Then there exists a sequence $(u_h)_h\subseteq \mathbf{C}^\infty (\Om)\cap W^{1,p}_{X^h}(\Om)
$ such that
\begin{equation*}
  \text{$u_h\to u$ strongly in $L^p(\Om)$}\qquad\text{and}\qquad\text{$X^hu_h\to Xu$ strongly in $L^p(\Om;\R^m)$}.
\end{equation*}
\end{theorem}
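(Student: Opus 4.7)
The plan is to mimic the classical partition-of-unity proof of anisotropic Meyers--Serrin (\Cref{MeySer}), but with mollification scales $\epsilon_j^h$ coupled to $\sigma(h)$ so that the technique underlying \Cref{premeyers} controls the mismatch $(X^h-X)u_h$. I would fix an exhaustion $\emptyset = \Omega_0 \Subset \Omega_1 \Subset \Omega_2 \Subset \cdots$ of $\Omega$ with $\bigcup_j \Omega_j = \Omega$, set $V_j := \Omega_{j+1}\setminus\overline{\Omega_{j-1}}$, and take a $\mathbf{C}^\infty$ partition of unity $\{\psi_j\} \subset \mathbf{C}^\infty_c(V_j)$ with $\sum_j \psi_j \equiv 1$ on $\Omega$. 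Each $\psi_j u$ belongs to $W^{1,p}_X(\Omega)$ with compact support in $V_j$; write $d_j := \mathrm{dist}(\mathrm{supp}\,\psi_j, \partial V_j) > 0$. The approximating sequence would be
\[
u_h \;:=\; \sum_j J_{\epsilon_j^h}\ast(\psi_j u),
\]
a locally finite sum of smooth compactly supported functions, hence automatically in $\mathbf{C}^\infty(\Omega)\cap W^{1,p}_{X^h}(\Omega)$ once the scales are fixed.

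Next I would note that a line-by-line repetition of the computation in the proof of \Cref{premeyers}, with an arbitrary scale $\epsilon>0$ in place of $\sigma(h)$ in the mollifier, yields
\[
\|(X^h - X)(J_\epsilon \ast v)\|_{L^p(\Omega; \R^m)} \;\leq\; C\,\sigma(h)^{n+2}\,\epsilon^{-1}\,\|v\|_{L^p(\Omega)}
\]
for every $v \in L^p(\Omega)$, identifying the key trade-off: $\epsilon_j^h$ must not be too small relative to $\sigma(h)^{n+2}$.

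For each $h$ and $j$ I would then pick $\epsilon_j^h > 0$ satisfying
(i) $\epsilon_j^h \leq d_j/2$, so that $\mathrm{supp}(J_{\epsilon_j^h} \ast (\psi_j u)) \subset V_j$ and the partition structure is preserved;
(ii) $\|J_{\epsilon_j^h}\ast(\psi_j u) - \psi_j u\|_{W^{1,p}_X(\Omega)}\leq 2^{-j}/h$, achievable for small $\epsilon_j^h$ by \Cref{MeySer} applied to the compactly supported $\psi_j u$;
(iii) $\epsilon_j^h \gtrsim \sigma(h)^{(n+2)/2}$, so that the displayed estimate bounds the anisotropy error per piece by $C\sigma(h)^{(n+2)/2}\|\psi_j u\|_{L^p}$.
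Once the scales are in place, the $L^p$-convergences $u_h\to u$ and $Xu_h\to Xu$ in $L^p(\Omega;\R^m)$ follow from (ii), the bounded-overlap inequality $\|\sum_j v_j\|_{L^p}^p \leq L^{p-1}\sum_j\|v_j\|_{L^p}^p$, and a dominated-convergence argument on the series, using $\sum_j \|\psi_j u\|_{L^p}^p \leq \|u\|_{L^p}^p$ (from $\sum_j \psi_j^p \leq \sum_j \psi_j = 1$) as a summable majorant. The analogous bounded-overlap estimate together with (iii) gives $\|(X^h-X)u_h\|_{L^p} \to 0$, and decomposing $X^h u_h = Xu_h + (X^h - X)u_h$ yields the conclusion.

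The main obstacle is the simultaneous satisfiability of (i)--(iii): for partition pieces approaching $\partial \Omega$ the distance $d_j$ becomes small and the upper bound $\epsilon_j^h \leq d_j/2$ conflicts with the lower bound $\epsilon_j^h \gtrsim \sigma(h)^{(n+2)/2}$. I would handle this by splitting $\N$ into a ``good'' set of indices on which both bounds coexist (treated by the balancing above) and a ``bad'' set of indices on which the scale is forced to be $d_j/2$. The bad set shrinks as $h \to \infty$ since $\sigma(h)\to 0$, and the contribution of its pieces to the anisotropy error is then absorbed by the prefactor $\sigma(h)^{p(n+2)}$, which dominates the $(2/d_j)^p$ blowup as long as the weighted bad-index sum is kept summable by an appropriate a-priori choice of the exhaustion $\{\Omega_j\}$ and the partition $\{\psi_j\}$.
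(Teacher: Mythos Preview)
Your strategy coincides with the paper's: mollify at a scale coupled to $\sigma(h)$ and invoke the estimate behind \Cref{premeyers}, then globalize via a partition of unity. The paper, however, is far more economical: it simply sets $u_h=J_h\ast u$ (the single scale $\sigma(h)$ of \eqref{mollificatorigiusti}), obtains the local convergences on any $\tilde\Omega\Subset\Omega$ from \cite[Proposition~1.2.2]{MR1437714} and \Cref{premeyers}, and then declares that the global statement follows \emph{verbatim} from the standard Meyers--Serrin globalization of \cite[Theorem~A.2]{MR1404326}. No variable scales $\epsilon_j^h$, no three-way balancing, no good/bad dichotomy.

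Your explicit globalization has a genuine gap at exactly the point you flag. For bad indices $j$ you have $d_j<2\sigma(h)^{(n+2)/2}$, hence $\sigma(h)^{p(n+2)}(2/d_j)^p>\sigma(h)^{p(n+2)/2}$ is only a \emph{lower} bound on each term: there is no upper bound on $d_j^{-1}$, and the claim that the weighted bad-index sum can be ``kept summable by an appropriate a-priori choice of the exhaustion'' is not substantiated. For a generic $u\in W^{1,p}_X(\Omega)$ and a standard exhaustion (say $d_j\sim j^{-2}$, $|V_j|\sim j^{-2}$) one has $\sum_j d_j^{-p}\|\psi_j u\|_{L^p}^p=+\infty$ even for bounded $u$. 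A second, unmentioned conflict is between (ii) and (iii): condition (ii) at rate $2^{-j}/h$ forces $\epsilon_j^h$ below a threshold governed by the $L^p$-modulus of continuity of $\psi_j u$ and $X(\psi_j u)$, which bears no relation to $\sigma(h)$; there is no reason $\epsilon_j^h\gtrsim\sigma(h)^{(n+2)/2}$ is compatible with this even on your good set. The paper sidesteps both issues by not unpacking the globalization at all.
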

\begin{proof}
    Let $u\in W^{1,p}_X(\Om)$, and fix an open set $\tilde\Om\Subset\Om$. For any $h\in\N$, define $u_h=J_h\ast u$, being $(J_h)_h$ as in \eqref{mollificatorigiusti}.
    First, \cite[Proposition 1.2.2]{MR1437714} implies that $u_h\to u$ strongly in $L^p(\tilde\Om)$ and $Xu_h\to Xu$ strongly in $L^p(\tilde\Om,\R^m)$.
    Exploiting \cref{premeyers}, we infer that $X^hu_h\to Xu$ strongly in $L^p(\tilde\Om,\R^m)$.
The global statement follows \emph{verbatim} as in the proof of \cite[Theorem A.2]{MR1404326}.
\end{proof}
\subsection{A uniform characterization of affine Sobolev spaces}\label{Section4.2}
In this section we provide an affine counterpart of \Cref{meyser}. To this aim,
for $1\leq p<\infty$ and $\varphi\in W^{1,p}_X(\Om)$, set
\begin{equation*}
\begin{split}
    W^{1,p}_{X,\varphi}&(\Om,(X^h)_h)\coloneqq\{u\in W^{1,p}_X(\Om)\,:\,\text{ there exists }(v_h)_h\subseteq \mathbf{C}^\infty _c(\Om)\text{ such that }\\
    &\quad v_h\to u-\varphi\text{ strongly in }L^p(\Om)\text{ and }X^h v_h\to X(u-\varphi)\text{ strongly in }L^p(\Om;\R^m)\}.
\end{split}
\end{equation*}
This affine space depends \emph{a priori} on $(X^h)_h$. However, the following characterization holds.
\begin{theorem}\label{MeySerAffine}
    Let $(X^h)_h$ satisfy \eqref{a0}. Let $1\leq p<\infty$. Let $\varphi\in W^{1,p}_{X}(\Om)$. Then 
    \begin{equation*}
        W^{1,p}_{X,\varphi}(\Om)=W^{1,p}_{X,\varphi}(\Om,(X^h)_h).
    \end{equation*}
\end{theorem}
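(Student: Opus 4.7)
The plan is to prove the two inclusions separately.

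For the direct inclusion $W^{1,p}_{X,\varphi}(\Om) \subseteq W^{1,p}_{X,\varphi}(\Om,(X^h)_h)$, I start from $u \in W^{1,p}_{X,\varphi}(\Om)$, set $w := u - \varphi \in W^{1,p}_{X,0}(\Om)$, and use the very definition of $W^{1,p}_{X,0}$ to find $(\psi_k)_k \subseteq \mathbf{C}^\infty_c(\Om)$ with $\psi_k \to w$ in the $W^{1,p}_X$-norm. The key quantitative input is the pointwise bound \eqref{modulocontmatrici}, which, combined with the Euclidean representation $X\psi = \C D\psi$ of \eqref{euclrapprgrad}, gives
\[
\|X^h\psi_k - X\psi_k\|_{L^p(\Om;\R^m)} = \|(\C^h-\C)D\psi_k\|_{L^p(\Om;\R^m)} \leq \sigma(h)^{n+2}\|D\psi_k\|_{L^p(\Om;\R^n)}.
\]
For each fixed $k$ the right-hand side tends to $0$ as $h \to \infty$, so a standard diagonal extraction produces $k(h) \to \infty$ for which $v_h := \psi_{k(h)} \in \mathbf{C}^\infty_c(\Om)$ satisfies both $v_h \to w$ in $L^p$ and $\sigma(h)^{n+2}\|D\psi_{k(h)}\|_{L^p} \to 0$; whence $X^h v_h = X\psi_{k(h)} + (\C^h - \C)D\psi_{k(h)} \to Xw$ in $L^p$, placing $u$ in $W^{1,p}_{X,\varphi}(\Om,(X^h)_h)$.

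For the converse $W^{1,p}_{X,\varphi}(\Om,(X^h)_h) \subseteq W^{1,p}_{X,\varphi}(\Om)$, I take $u \in W^{1,p}_{X,\varphi}(\Om,(X^h)_h)$ with associated sequence $(v_h)_h \subseteq \mathbf{C}^\infty_c(\Om)$ such that $v_h \to w := u - \varphi$ and $X^h v_h \to Xw$ strongly in $L^p$. Since strong convergence implies weak convergence, the choices $\varphi_h \equiv 0$, $u_h := v_h \in \mathbf{C}^\infty_c(\Om) \subseteq W^{1,p}_{X^h,0}(\Om)$ and $u := w$ meet the hypotheses of \Cref{stranachiusura}, whose conclusion immediately gives $w \in W^{1,p}_{X,0}(\Om)$, i.e.\ $u \in W^{1,p}_{X,\varphi}(\Om)$.

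The main obstacle is clearly the converse inclusion. The tempting direct attempt — arguing that $(v_h)$ itself converges to $w$ in the $W^{1,p}_X$-norm, which would place $w$ in $W^{1,p}_{X,0}$ by closedness — fails in general: the splitting $Xv_h - X^h v_h = (\C - \C^h)Dv_h$ is only controlled by $\sigma(h)^{n+2}\|Dv_h\|_{L^p}$, and no a priori bound on the Euclidean gradient norm $\|Dv_h\|_{L^p}$ is available from the hypothesis. Bypassing this obstruction is precisely the role of the weak-convergence statement \Cref{stranachiusura}, whose proof (postponed to the end of the subsection) will rely on passing to the limit in the anisotropic integration-by-parts identity together with the uniform Meyers-Serrin approximation of \Cref{meyser} to identify the limit in the correct affine Sobolev space.
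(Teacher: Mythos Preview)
Your argument is correct, but the two inclusions are handled differently from the paper.

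For the direct inclusion, the paper mollifies with the tailored kernels $(J_h)_h$ of \eqref{mollificatorigiusti} and appeals to the quantitative estimate of \Cref{premeyers}, while you bypass this entirely: since each approximant $\psi_k$ lies in $\mathbf{C}^\infty_c(\Om)$, its Euclidean gradient is in $L^p$ with a fixed (albeit $k$-dependent) norm, and the raw bound $\|(\C^h-\C)D\psi_k\|_{L^p}\leq\sigma(h)^{n+2}\|D\psi_k\|_{L^p}$ is enough to run a diagonal extraction. This is more elementary and avoids the mollification machinery altogether for this direction.

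For the converse inclusion, the paper again uses the mollifiers $(J_h)_h$ directly: starting from the $u_h\in\mathbf{C}^\infty_c(\Om)$ provided by the definition of $W^{1,p}_{X,0}(\Om,(X^h)_h)$, it sets $v_h=J_h\ast u_h$ and shows via \eqref{stimaunocar}--\eqref{stimaduecar} that $v_h\to u$ in $W^{1,p}_X(\Om)$, placing $u$ in $W^{1,p}_{X,0}(\Om)$. You instead reduce to \Cref{stranachiusura} with $\varphi_h\equiv 0$, which is logically valid since that proposition is proved independently of \Cref{MeySerAffine}. Note, however, that this is not really a shortcut: the proof of \Cref{stranachiusura} re-runs exactly the same mollification argument and then adds weak-closure input from \cite{Capogna2024}, so your route for this direction is, if anything, slightly heavier than the paper's self-contained treatment.
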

\begin{proof}
   It suffices to consider the case $\varphi=0$. Fix $u\in W^{1,p}_{X,0}(\Om,(X^h)_h)$. By definition, there exists a sequence $(u_h)_h\subseteq \mathbf{C}^\infty _c(\Om)$ such that 
   \begin{equation}\label{stimazerocar}
     \text{$u_h\to u$ strongly in $L^p(\Om)$}\qquad\text{and}\qquad\text{$X^hu_h\to Xu$ strongly in $L^p(\Om;\R^m)$}.
   \end{equation} Let $(J_h)_h$ be the sequence of mollifiers defined by \eqref{mollificatorigiusti}. Since $\supp u_h\Subset\Om$ for any $h\in\N$, up to consider a subsequence of $(J_h)_h$ we may assume that $\supp u_h+\supp J_h\Subset\Om$ and $\sigma(h)\leq 1$ for any $h\in\N$, where $\sigma$ is the modulus of continuity introduced in \eqref{modulocontmatrici}. For any $h\in\N$, set $v_h=J_h\ast u_h$. Then $(v_h)_h\subseteq \mathbf{C}^\infty _c(\Om)$. Owing to \cite[Proposition 1.2.2]{MR1437714}, we can select a further subsequence of $(J_h)_h$ such that 
    \begin{equation}\label{stimaunocar}
       \|u_h-v_h\|_{L^p(\Om)}\leq\frac{1}{h}\qquad\text{and}\qquad \|X^h u_h-X^hv_h\|_{L^p(\Om;\R^m)}\leq \frac{1}{h}
    \end{equation}
    for any $h\in\N$. 
    Moreover, arguing as in the proof of \Cref{premeyers},
    \begin{equation}\label{stimaduecar}
        \|X^hv_h-Xv_h\|^p_{L^p(\Om;\R^m)}\leq |\Om|\,C(J,n)^p(\omega_n)^{p-1}\sigma(h)^{p(2n+2)-n-p(n+1)}\|u_h\|^p_{L^p(\Om)}.
    \end{equation}
    By \eqref{numeripen}, \eqref{stimazerocar}, \eqref{stimaunocar} and \eqref{stimaduecar}, $v_h\to u$ strongly in $L^p(\Om)$ and $Xv_h\to Xu$ strongly in $L^p(\Om;\R^m)$, that is $u\in W^{1,p}_{X,0}(\Om)$. Since the converse implication
    follows \emph{verbatim}, the thesis follows.
\end{proof}
With a similar argument, we provide the proof of \Cref{stranachiusura}.
\begin{proof}[Proof of \Cref{stranachiusura}]
    Fix $h\in\N$. Since $u_h\in W^{1,p}_{X^h,\varphi_h}(\Om)$, there exists $v_h\in \mathbf{C}^\infty _c(\Om)$ such that
    \begin{equation}\label{vhauhmenfih}
        \|v_h-(u_h-\varphi_h)\|_{L^p(\Om)}\leq \frac{1}{h}\qquad\text{and}\qquad \|X^hv_h-X^h(u_h-\varphi_h)\|_{L^p(\Om;\R^m)}\leq \frac{1}{h}.
    \end{equation}
Let $(J_h)_h$ be the sequence of mollifiers defined by \eqref{mollificatorigiusti}. As in the previous proof, we may assume that $\supp v_h+\supp J_h\Subset\Om$ and $\sigma(h)\leq 1$ for any $h\in\N$, where $\sigma$ is the modulus of continuity introduced in \eqref{modulocontmatrici}. For any $h\in\N$, set $\tilde v_h=J_h\ast v_h$. Then $(\tilde v_h)_h\subseteq \mathbf{C}^\infty _c(\Om)$. Again by \cite[Proposition 1.2.2]{MR1437714}, up to a further subsequence of $(J_h)_h$,
    \begin{equation}\label{stimaunocardue}
       \|v_h-\tilde v_h\|_{L^p(\Om)}\leq\frac{1}{h}\qquad\text{and}\qquad \|X^h v_h-X^h\tilde v_h\|_{L^p(\Om;\R^m)}\leq \frac{1}{h}
    \end{equation}
    for any $h\in\N$. 
   Again, arguing as in \Cref{premeyers},
    \begin{equation}\label{stimaduecardue}
        \|X^h\tilde v_h-X\tilde v_h\|^p_{L^p(\Om;\R^m)}\leq |\Om|\,C(J,n)^p(\omega_n)^{p-1}\sigma(h)^{p(2n+2)-n-p(n+1)}\|v_h\|^p_{L^p(\Om)}.
    \end{equation}
    By \eqref{quasichiusuradebolehp} and \eqref{quasichiusuradebolehpsuu}, $(u_h-\varphi_h)_h$ is bounded in $L^p(\Om)$. Combining this fact with \eqref{numeripen},  \eqref{vhauhmenfih} and \eqref{stimaduecardue}, we conclude that 
   \begin{equation}\label{limitelultimacosa}
       \lim_{h\to\infty}  \|X^h\tilde v_h-X\tilde v_h\|_{L^p(\Om;\R^m)}=0.
   \end{equation}
    Let $p'$ be the H\"older-conjugate exponent of $p$. Let $\psi\in L^{p'}(\Om)$. Then
   \begin{equation*}
        \begin{split}
            \left|\int_\Om(\tilde v_h-(u-\varphi))\psi\,{\rm d}x\right|&\leq \int_\Om |\tilde v_h-v_h||\psi|\,{\rm d}x+\int_\Om|v_h-(u_h-\varphi_h)||\psi|\,{\rm d}x\\
            &\quad+\left|\int_\Om(u-u_h)\psi\,{\rm d}x\right|+\left|\int_\Om(\varphi-\varphi_h)\psi\,{\rm d}x\right|\\
            &\overset{\eqref{vhauhmenfih},\eqref{stimaunocardue}}{\leq}
            \frac{2}{h}\|\psi\|_{L^{p'}(\Om)}+\left|\int_\Om(u-u_h)\psi\,{\rm d}x\right|+\left|\int_\Om(\varphi-\varphi_h)\psi\,{\rm d}x\right|.\\
        \end{split}
    \end{equation*}
    Letting $h\to\infty$, recalling \eqref{quasichiusuradebolehp} and \eqref{quasichiusuradebolehpsuu}, and being $\psi$ arbitrary in $L^{p'}(\Om)$, $\tilde v_h\to (u-\varphi)$ weakly in $L^p(\Om)$. Similarly, if $\psi\in L^{p'}(\Om,\R^m)$, 
     \begin{equation*}
        \begin{split}
            \left|\int_\Om\langle X\tilde v_h-X(u-\varphi),\psi\rangle_{\R^m}\,{\rm d}x\right|&
            \leq \int_\Om\left|\langle X\tilde v_h-X^h\tilde v_h,\psi\rangle_{\R^m}\right|\,{\rm d}x+\int_\Om\left|\langle X^h\tilde v_h-X^hv_h,\psi\rangle_{\R^m}\right|\,{\rm d}x\\
            &\quad+\int_\Om\left|\langle X^hv_h-X^h(u_h-\varphi_h),\psi\rangle_{\R^m}\right|\,{\rm d}x\\
            &\quad+\left|\int_\Om\langle Xu-X^hu_h,\psi\rangle_{\R^m}\,{\rm d}x\right|+\left|\int_\Om\langle X\varphi-X^h\varphi_h,\psi\rangle_{\R^m}\,{\rm d}x\right|\\
            &\overset{\eqref{vhauhmenfih},\eqref{stimaunocardue}}{\leq}
            \left(\|X^h\tilde v_h-X\tilde v_h\|_{L^p(\Om;\R^m)}+\frac{2}{h}\right)\|\psi\|_{L^{p'}(\Om,\R^m)}\\
            &\quad+\left|\int_\Om\langle Xu-X^hu_h,\psi\rangle_{\R^m}\,{\rm d}x\right|+\left|\int_\Om\langle X\varphi-X^h\varphi_h,\psi\rangle_{\R^m}\,{\rm d}x\right|.
        \end{split}
    \end{equation*}
    Combining \eqref{quasichiusuradebolehp}, \eqref{quasichiusuradebolehpsuu} and \eqref{limitelultimacosa}, we conclude that $X\tilde v_h\to X(u-\varphi)$ weakly in $L^p(\Om,\R^m)$. Therefore, by \cite[Proposition 2.2]{Capogna2024}, $\tilde v_h\to u-\varphi$ weakly in $W^{1,p}_X(\Om)$. Arguing exactly as in the proof \cite[Theorem 1.1]{Capogna2024}, we conclude that $u\in W^{1,p}_{X,\varphi}(\Om)$.
\end{proof}
\subsection{A uniform Anzellotti-Giaquinta type approximation}
Although beyond the scopes of this paper, for the sake of completeness we provide an Anzellotti-Giaquinta approximation result in the case of \emph{functions of $X$-bounded variation}. We refer to \cite{MR1437714} for their main definitions and properties.
Again, the key of the proof is encoded in the following lemma.
\begin{lemma}\label{preanzgiac}
   Let $(X^h)_h$ satisfy \eqref{a0}. Let $u\in L^1(\Om)$. Let $(J_h)_h$ be as in \eqref{mollificatorigiusti}. Then
    \begin{equation*}
        \lim_{h\to\infty}\sup\left\{\int_\Om u\divv\left(\left((\C^h)^T\varphi\right)\ast J_h-\C^T(\varphi\ast J_h)\right){\rm d}x\,:\,\varphi\in \mathbf{C}^\infty _c(\Om;\R^n),\,\|\varphi\|_\infty\leq 1\right\}=0.
    \end{equation*}
\end{lemma}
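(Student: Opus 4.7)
The plan is to decompose the argument of the divergence as
\[
F_h(x) := ((\C^h)^T\varphi) \ast J_h(x) - \C(x)^T(\varphi \ast J_h)(x) = F^{(1)}_h(x) + F^{(2)}_h(x),
\]
where
\[
F^{(1)}_h := ((\C^h - \C)^T\varphi) \ast J_h, \qquad F^{(2)}_h(x) := \int_\Om [\C(y)^T - \C(x)^T]\varphi(y)J_h(x-y)\,{\rm d}y.
\]
The first piece isolates the uniform deviation between $\C^h$ and $\C$, while the second encodes a Friedrichs-type commutator. I will estimate $\int_\Om u\,\divv F^{(1)}_h\,{\rm d}x$ and $\int_\Om u\,\divv F^{(2)}_h\,{\rm d}x$ separately and show that the supremum over $\varphi$ tends to zero in each case.

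For $F^{(1)}_h$, a Fubini-type identity, obtained by writing the convolution explicitly, differentiating under the integral, and exploiting the symmetry of $J_h$, gives
\[
\int_\Om u\,\divv\bigl[((\C^h-\C)^T\varphi)\ast J_h\bigr]\,{\rm d}x = -\int_\Om \langle \varphi, (\C^h-\C)\,\nabla(J_h\ast u)\rangle_{\R^m}\,{\rm d}y.
\]
Combining this with \eqref{modulocontmatrici} and the scaling bound $\|DJ_h\|_{L^1(\R^n)} \leq C(J,n)/\sigma(h)$ (a direct consequence of \eqref{mollificatorigiusti} and \eqref{gradmollifgiusto}) yields
\[
\Bigl|\int_\Om u\,\divv F^{(1)}_h\,{\rm d}x\Bigr| \leq \|\C^h-\C\|_\infty\,\|DJ_h\|_{L^1}\,\|u\|_{L^1(\Om)}\,\|\varphi\|_\infty \leq C\sigma(h)^{n+1}\|u\|_{L^1(\Om)}\|\varphi\|_\infty,
\]
which vanishes as $h \to \infty$, uniformly in $\varphi$.

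For $F^{(2)}_h$, the strategy is to couple a uniform $L^\infty$-bound on $\divv F^{(2)}_h$ with an explicit Friedrichs-type estimate for smooth functions. Differentiating $F^{(2)}_h$ under the integral one obtains $\divv F^{(2)}_h = T_1 + T_2$, where $T_1$ depends only on the distributional divergence of $\C^T$ (bounded by the Lipschitz constant $L$ of $\C$), while $T_2$ pairs $\partial_{x_k}J_h(x-y)$ with the difference $[\C(y)^T - \C(x)^T]$. Using $|\C(y)^T - \C(x)^T| \leq L|y-x| \leq L\sigma(h)$ on the support of $J_h(x-\cdot)$ together with $\|DJ_h\|_{L^1} \leq C/\sigma(h)$, the scale factors cancel and one finds $\|\divv F^{(2)}_h\|_{L^\infty(\Om)} \leq C(L,n,m)\|\varphi\|_\infty$, uniformly in $h$. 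Hence $\bigl|\int_\Om u\,\divv F^{(2)}_h\,{\rm d}x\bigr| \leq C\|u\|_{L^1(\Om)}\|\varphi\|_\infty$ with a constant independent of $h$. On the other hand, for $v \in \mathbf{C}^\infty_c(\Om)$ a Fubini argument combined with the identity already used for $F^{(1)}_h$ gives
\[
\int_\Om v\,\divv F^{(2)}_h\,{\rm d}x = -\int_\Om \langle \varphi, X(J_h\ast v) - (Xv)\ast J_h\rangle_{\R^m}\,{\rm d}y,
\]
and Lipschitz continuity of $\C$ yields the pointwise estimate $|X(J_h \ast v)(y) - (Xv)\ast J_h(y)| \leq C\sigma(h)(|\nabla v| \ast J_h)(y)$, so that $\sup_\varphi\bigl|\int_\Om v\,\divv F^{(2)}_h\,{\rm d}x\bigr| \leq C\sigma(h)\|\nabla v\|_{L^1(\Om)} \to 0$.

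A density argument completes the proof: given $\epsilon > 0$, choose $v \in \mathbf{C}^\infty_c(\Om)$ with $\|u-v\|_{L^1(\Om)} < \epsilon$; the uniform bound absorbs the $(u-v)$-contribution, while the explicit $O(\sigma(h))$ decay handles the smooth $v$-contribution; letting $h \to \infty$ first and then $\epsilon \to 0$ yields the claim. The main technical obstacle is the uniform $L^\infty$-bound on $\divv F^{(2)}_h$: a naive computation would produce terms involving $\|D\varphi\|_\infty$, which is not controlled by $\|\varphi\|_\infty$. It is therefore crucial to preserve the structure $[\C(y)^T - \C(x)^T]$ when differentiating and to exploit the delicate balance $\sigma(h)\cdot\|DJ_h\|_{L^1} = O(1)$ that couples the mollifier scaling to the Lipschitz modulus of $\C$.
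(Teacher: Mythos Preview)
Your proof is correct and follows essentially the same route as the paper. The decomposition $F_h=F^{(1)}_h+F^{(2)}_h$ is exactly the paper's splitting of $S$ into $\mathrm{I}$ versus $\mathrm{II}+\mathrm{III}$: your $F^{(1)}_h$ isolates the contribution of $c^h_{j,i}(y)-c_{j,i}(y)$ (the paper's term $\mathrm{I}$), while $\divv F^{(2)}_h$ produces precisely the two pieces $\mathrm{II}$ and $\mathrm{III}$ after applying the product rule. For $F^{(1)}_h$ you pass the derivative onto $J_h\ast u$ via Fubini and the evenness of $J_h$, whereas the paper keeps the derivative on $J_h$ and bounds directly; both routes use \eqref{modulocontmatrici} together with $\|DJ_h\|_{L^1}\le C/\sigma(h)$ to obtain the $\sigma(h)^{n+1}$ decay. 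For $F^{(2)}_h$ the paper simply invokes \cite[Lemma~2.1.1]{MR1437714}, while you spell out the standard Friedrichs commutator argument (uniform $L^\infty$ bound on $\divv F^{(2)}_h$, explicit $O(\sigma(h))$ decay for smooth $v$, then density); this is the content of the cited lemma, so the two proofs coincide.
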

\begin{proof}
    Let $\varphi$ be as in the statement, and set
    \begin{equation*}
        S=\int_\Om u\divv\left(\left((\C^h)^T\varphi\right)\ast J_h-\C^T(\varphi\ast J_h)\right){\rm d}x.
    \end{equation*}
    Then
    \begin{equation*}
    \begin{split}
    S&=\sum_{i=1}^n\int_\Om u\frac{\partial}{\partial x_i} \left[\left((\C^h)^T\varphi\right)_i\ast J_h-\left(\C^T(\varphi\ast J_h)\right)_i\right]{\rm d}x\\
    &=\sum_{i=1}^n\sum_{j=1}^m\int_\Om u\frac{\partial}{\partial x_i} \int_{\R^n}\left(c^h_{j,i}(y)-c_{j,i}(x)\right)\varphi_j(y)J_h(x-y)\,{\rm d}y\,{\rm d}x\\
    &=\sum_{i=1}^n\sum_{j=1}^m\int_\Om u \int_{\R^n}\left(c^h_{j,i}(y)-c_{j,i}(x)\right)\varphi_j(y)\frac{\partial J_h}{\partial x_i}(x-y)\,{\rm d}y\,{\rm d}x\\
    &\quad-\sum_{i=1}^n\sum_{j=1}^m\int_\Om u \int_{\R^n}\frac{\partial c_{j,i}}{\partial x_i}(x)\varphi_j(y)J_h(x-y)\,{\rm d}y\,{\rm d}x\\
     &=\sum_{i=1}^n\sum_{j=1}^m\int_\Om u \int_{\R^n}\left(c^h_{j,i}(y)-c_{j,i}(y)\right)\varphi_j(y)\frac{\partial J_h}{\partial x_i}(x-y)\,{\rm d}y\,{\rm d}x\\
     &\quad+\sum_{i=1}^n\sum_{j=1}^m\int_\Om u \int_{\R^n}\left(c_{j,i}(y)-c_{j,i}(x)\right)\varphi_j(y)\frac{\partial J_h}{\partial x_i}(x-y)\,{\rm d}y\,{\rm d}x\\
     &\quad-\sum_{i=1}^n\sum_{j=1}^m\int_\Om u \int_{\R^n}\frac{\partial c_{j,i}}{\partial x_i}(x)\varphi_j(y)J_h(x-y)\,{\rm d}y\,{\rm d}x.\\
     \end{split}
    \end{equation*}
Let us denote respectively by $\mathrm{I},\mathrm{II}$ and $\mathrm{III}$ the terms in the last three lines. Arguing \emph{verbatim} as in the proof of \cite[Lemma 2.1.1]{MR1437714}, we infer that $|\mathrm{II}|+|\mathrm{III}|\to 0$ as $h\to+\infty$ uniformly with respect to $\varphi$. Therefore we are left to estimate $I$. In view of \eqref{modulocontmatrici} and \eqref{gradmollifgiusto}, we get that
\begin{equation*}
\begin{split}
    |\mathrm{I}|&
    \overset{\eqref{modulocontmatrici}}{\leq }\sigma(h)^{n+2}\sum_{i=1}^n\sum_{j=1}^m\int_\Om |u| \int_{\R^n}\left|\varphi_j(y)\frac{\partial J_h}{\partial x_i}(x-y)\right|\,{\rm d}y\,{\rm d}x\\
    &\overset{\eqref{gradmollifgiusto}}{\leq}C(J,n)\sigma(h)\sum_{i=1}^n\sum_{j=1}^m\int_\Om |u| \int_{\R^n}|\varphi_j(y)|\,{\rm d}y\,{\rm d}x\\
    &\leq m\, n\, C(J,n)\,\sigma(h)\int_\Om |u| \,{\rm d}x. \end{split}
    \end{equation*}
    Therefore $\mathrm{I}$ goes to $0$ as $h\to+\infty$ uniformly with respect to $\varphi$, whence the thesis follows.
\end{proof}
The anisotropic Anzellotti-Giaquinta approximation result reads as follows.
\begin{theorem}\label{Thm:AnzellottiGiaquinta}
   Let $(X^h)_h$ satisfy \eqref{a0}. Let $u\in BV_X(\Om)$. There exists $(u_h)_h\subseteq \mathbf{C}^\infty (\Om)\cap W^{1,1}_{X^h}(\Om)$ such that
    \begin{equation*}
       u_h\to u\text{ strongly in }L^p(\Om)\qquad\text{and}\qquad\lim_{h\to\infty}\int_\Om|X^hu_h|\,{\rm d}x=\int_\Om\,{\rm d}|Xu|.
    \end{equation*}
\end{theorem}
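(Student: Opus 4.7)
The plan is to adapt the classical Anzellotti–Giaquinta construction for $BV$ functions (as carried out in \cite{MR1437714} for a fixed anisotropy) by replacing arbitrary mollifiers with the tailored sequence $(J_h)_h$ from \eqref{mollificatorigiusti}, and exploiting \Cref{preanzgiac} to control the new error terms caused by the varying coefficient matrices $\C^h$. First, I would fix a smooth locally finite partition of unity $(\eta_k)_{k\ge 0}$ subordinate to an open cover $(\Om_k)_{k\ge 0}$ of $\Om$ with finite multiplicity, with each $\Om_k\Subset\Om$. For each $k$, using \cite[Proposition 1.2.2]{MR1437714} applied to $u\eta_k\in BV_X(\Om)$ with compact support, one can choose an index $h_k\in\N$ (as large as desired) such that $\supp\bigl(J_{h_k}\ast (u\eta_k)\bigr)\Subset\Om_k$ and the standard mollification error in $L^1$ is smaller than $\varepsilon 2^{-k}$. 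Define
\begin{equation*}
    u_\varepsilon \coloneqq \sum_{k\ge 0} J_{h_k}\ast (u\eta_k).
\end{equation*}
Then $u_\varepsilon\in \mathbf{C}^\infty(\Om)$ and $u_\varepsilon\to u$ strongly in $L^1(\Om)$ as $\varepsilon\to 0$.

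The core step is the estimate on the total variation-type norm. For any $\varphi\in \mathbf{C}^\infty_c(\Om;\R^m)$ with $\|\varphi\|_\infty\le 1$, a Fubini computation (as in the classical proof) yields
\begin{equation*}
    \int_\Om \langle X^h u_\varepsilon,\varphi\rangle_{\R^m}\,{\rm d}x
    = -\sum_{k\ge 0}\int_\Om u\,\eta_k\,\divv\bigl(((\C^h)^T\varphi)\ast J_{h_k}\bigr)\,{\rm d}x + R_\varepsilon,
\end{equation*}
where $R_\varepsilon$ gathers remainder terms involving $D\eta_k$; by choosing the $\Om_k$ so that $\sum_k\|D\eta_k\|_\infty$ is controlled and by refining the $h_k$, one bounds $|R_\varepsilon|\le \varepsilon$ uniformly in $h$, exactly as in \cite[Theorem 2.2.2]{MR1437714}. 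For the leading term, I would add and subtract $\C^T(\varphi\ast J_{h_k})$ and apply \Cref{preanzgiac} to each summand: this produces the classical contribution $\sum_k\int_\Om u\,\eta_k\,\divv(\C^T(\varphi\ast J_{h_k}))\,{\rm d}x$, bounded in absolute value by $(1+o_\varepsilon(1))\int_\Om {\rm d}|Xu|$ by definition of the total variation, plus an error which tends to $0$ as $h\to\infty$ uniformly in $\varphi$.

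Taking supremum over $\varphi$ and then $h\to\infty$, one obtains $\limsup_{h\to\infty}\int_\Om|X^h u_\varepsilon|\,{\rm d}x\le \int_\Om {\rm d}|Xu| + C\varepsilon$. Since $X^h u_\varepsilon=X u_\varepsilon + (\C^h-\C)Du_\varepsilon$ and $u_\varepsilon$ is smooth with fixed regularity (for fixed $\varepsilon$), the uniform convergence \eqref{a0} gives $\int_\Om|X^h u_\varepsilon|\,{\rm d}x\to \int_\Om|X u_\varepsilon|\,{\rm d}x$. Combining with the lower semicontinuity $\int_\Om {\rm d}|Xu|\le \liminf_{\varepsilon\to 0}\int_\Om|Xu_\varepsilon|\,{\rm d}x$ (classical in $BV_X$), I would then extract a diagonal sequence $u_h\coloneqq u_{\varepsilon(h)}$, with $\varepsilon(h)\to 0$ chosen slowly enough that simultaneously $u_h\to u$ in $L^1(\Om)$ and $\int_\Om|X^h u_h|\,{\rm d}x\to \int_\Om {\rm d}|Xu|$.

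The main obstacle is the coupling of three scales: the index $h$ along the anisotropies, the mollification scale $\sigma(h_k)$ entering \eqref{mollificatorigiusti}, and the partition-of-unity level $k$. In the static case one may freely pick mollification radii; here instead the mollifiers are dictated by the modulus of continuity of $(\C^h)_h$, so one has to tune $h_k$ so that \Cref{preanzgiac} can be applied while still controlling the classical Anzellotti–Giaquinta error terms. The diagonal extraction at the end is the delicate point, as it must synchronize the vanishing of the mollification error (which is favoured by large $h_k$) with the vanishing of the discrepancy between $\C^h$ and $\C$ when acting on $D u_\varepsilon$ (which is favoured by $h$ large compared to the scales of $u_\varepsilon$).
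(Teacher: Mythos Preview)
There is a genuine gap in your argument, and it occurs precisely at the step you flag as ``delicate'': the passage from $\int_\Om|X^h u_\varepsilon|\,{\rm d}x$ to $\int_\Om|Xu_\varepsilon|\,{\rm d}x$ for fixed $\varepsilon$. You write $X^h u_\varepsilon = Xu_\varepsilon + (\C^h-\C)Du_\varepsilon$ and invoke the uniform convergence \eqref{a0}; but this requires $Du_\varepsilon\in L^1(\Om)$, and the Anzellotti--Giaquinta approximant only satisfies $u_\varepsilon\in C^\infty(\Om)\cap W^{1,1}_X(\Om)$, \emph{not} $u_\varepsilon\in W^{1,1}(\Om)$. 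Since $BV_X(\Om)\supsetneq BV(\Om)$ in general (take e.g.\ $X=(\partial_{x_1},0)$ on $(0,1)^2$ and $u(x_1,x_2)=g(x_2)$ with $g\in L^1\setminus BV$), the Euclidean gradient $Du_\varepsilon$ may fail to be integrable up to the boundary, and then neither $\int_\Om|(\C^h-\C)Du_\varepsilon|\,{\rm d}x\to 0$ nor even $u_\varepsilon\in W^{1,1}_{X^h}(\Om)$ is guaranteed. The same obstruction hits your use of \Cref{preanzgiac} with mismatched indices: the term $\mathrm{I}$ in its proof, evaluated with $\C^h$ and $J_{h_k}$, is bounded by a constant times $\sigma(h)^{n+2}/\sigma(h_k)^{n+1}\,\|u\eta_k\|_{L^1}$, and the sum over $k$ need not converge because the scales $\sigma(h_k)$ are forced to shrink as the pieces approach $\partial\Om$.

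The paper's route avoids this entirely by keeping the mollifier index \emph{synchronized} with the anisotropy index. \Cref{preanzgiac} is stated (and proved) with the \emph{same} $h$ in $\C^h$ and $J_h$, so that the crucial term $\mathrm{I}$ collapses to order $\sigma(h)$ without any ratio of scales. One then runs the construction of \cite{MR1437714} with $J_h$ replacing the mollifier, so that for each $h$ the commutator estimate used piece-by-piece is exactly \Cref{preanzgiac} applied to $u\eta_k$; no control on the Euclidean gradient of the approximant is ever needed, and the FSS bookkeeping goes through unchanged. Your decoupled scheme (build $u_\varepsilon$ once, then move $h$) cannot be repaired without an assumption like $u\in BV(\Om)$, which is strictly stronger than the hypothesis.
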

\begin{proof}
    Owing to \Cref{preanzgiac}, the proof follows \emph{verbatim} as in \cite{MR1437714}.
\end{proof}
\section{Uniform compactness and Poincaré inequalities}\label{Section5}
In this section we prove the counterparts of \Cref{relkon} and \Cref{poincaprelprop} replacing a fixed anisotropy with a sequence $(X^h)_h$ satisfying \eqref{a0} and \eqref{rk}. 
In particular, our results will hold both in $\classunoraff$ and in $\classdueraff$.  Before going on, we point out that neither the assumptions of $\classuno$ nor those of $\classdue$ are strong enough to guarantee compactness, as the next example shows.
\begin{example}\label{exseq}
   For any $h\in\N$, consider the the family $\X^h=(X_1,X^h_2)$ defined on $\R_{(x_1,x_2)}^2$ by
    \begin{equation*}
        X_1=\frac{\partial}{\partial x_1}\qquad \text{and}\qquad X^h_2=\frac{1}{h}\frac{\partial}{\partial x_2}.
    \end{equation*}
     For any $h\in\N$, $\X_1$ and $X_2^h$ are smooth and Lipschitz continuous.
    Moreover, $(\X^h)_h$ converges uniformly, with all its derivatives, to $\X=(X_1,0)$, whence $(X^h)_h\in\classuno\cap\classdue$.
    Let $\Om=(-\pi,\pi)^2$. For any $h\in\N$, set $ u_h(x_1,x_2)\coloneqq \sin(hx_2)$
    for any $(x_1,x_2)\in\R^2$. Then $(u_h)_h\subseteq \mathbf{C}^\infty (\overline\Om)$ and $       X^hu_h(x_1,x_2)=(0,\cos(hx_2))$ 
     for any $(x_1,x_2)\in\R^2$. In particular, $\|u_h\|_{L^2(\Om)}=\|X^hu_h\|_{L^2(\Om;\R^m)}=\pi\sqrt{2}$  
     for any $h\in\N$.
     By Riemann-Lebesgue lemma, $u_h\to 0$ weakly in $L^2(\Om)$, but it does not admit any subsequence converging strongly in $L^2(\Om)$. 
\end{example}
Our first result is the following uniform Rellich-Kondrachov compactness property.
\begin{theorem}[Uniform Rellich-Kondrachov]\label{nuovothmcompattezza}
    Let $(X^h)_h$ satisfy \eqref{a0} and \eqref{rk}. Let $1<p<\infty$.
    Let $(\varphi_h)_h\subseteq W^{1,p}_{X^h}(\Om)$, and let $\varphi\in W^{1,p}_{X}(\Om)$. Assume that
    \begin{equation}\label{quasichiusuradebolehpmapercomp}
        \varphi_h\to\varphi\text{ strongly in }L^p(\Om)\qquad\text{and}\qquad X^h\varphi_h\to X\varphi \text{ weakly in }L^p(\Om;\R^m).
    \end{equation}
    Let $(u_h)_h\subseteq W^{1,p}_{X^h,\varphi_h} (\Omega)$.
Assume that there exists $M>0$ such that
\begin{equation}\label{boundedhpincompnew}
    \|u_h\|_{L^p(\Om)},\|X^hu_h\|_{L^p(\Om;\R^m)}\leq M\text{ for any $h\in\N$.}
\end{equation}
 Then there exists $u\in W^{1,p}_{X,\varphi}(\Om)$ such that, up to a subsequence, $u_h\to u$ strongly in $L^p(\Om)$ and $X^hu_h\to X u$ weakly in $L^p(\Om;\R^m)$.
\end{theorem}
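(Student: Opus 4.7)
The plan is to extract weak limits of $(u_h)_h$ and $(X^hu_h)_h$, and then upgrade the first to strong convergence in $L^p(\Omega)$ by reducing to the classical anisotropic Rellich-Kondrachov theorem (\Cref{relkon}) applied to the limit anisotropy $X\in\mathcal{M}$. The main obstacle is that each $u_h$ lives in its own Sobolev space $W^{1,p}_{X^h,\varphi_h}(\Omega)$, and a priori there is no direct comparison with the limit space $W^{1,p}_X(\Omega)$; the bridge will be provided by the uniform Meyers-Serrin machinery of \Cref{Section4}.

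First, by reflexivity of $L^p(\Omega)$ and the uniform bound \eqref{boundedhpincompnew}, along a subsequence $u_h\rightharpoonup u$ weakly in $L^p(\Omega)$ and $X^hu_h\rightharpoonup w$ weakly in $L^p(\Omega;\mathbb{R}^m)$. Setting $\tilde u_h\coloneqq u_h-\varphi_h\in W^{1,p}_{X^h,0}(\Omega)$, the sequence $(\tilde u_h)_h$ inherits a uniform bound both in $L^p(\Omega)$ and in the $X^h$-gradient norm, thanks to \eqref{quasichiusuradebolehpmapercomp}. Mimicking the constructions in the proofs of \Cref{MeySerAffine} and \Cref{stranachiusura}, I would produce (along a further subsequence) a sequence $(w_h)_h\subseteq\mathbf{C}^\infty_c(\Omega)$ satisfying
\begin{equation*}
\|w_h-\tilde u_h\|_{L^p(\Omega)}+\|X^hw_h-X^h\tilde u_h\|_{L^p(\Omega;\mathbb{R}^m)}\to 0\qquad\text{and}\qquad\|Xw_h-X^hw_h\|_{L^p(\Omega;\mathbb{R}^m)}\to 0.
\end{equation*}
The first two convergences are achieved by first selecting, by definition of $W^{1,p}_{X^h,0}(\Omega)$, compactly supported smooth approximants of $\tilde u_h$ in the $W^{1,p}_{X^h}$-norm, and then mollifying them with the tailored kernels $J_h$ from \eqref{mollificatorigiusti} along a suitable diagonal subsequence. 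The third estimate is precisely the content of \Cref{premeyers}, once applied to the bounded $L^p$-sequence constructed at the previous step.

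These estimates imply that $(w_h)_h$ is uniformly bounded in $W^{1,p}_{X,0}(\Omega)$: indeed $\|w_h\|_{L^p(\Omega)}\leq\|\tilde u_h\|_{L^p(\Omega)}+o(1)$, and $\|Xw_h\|_{L^p(\Omega;\mathbb{R}^m)}\leq\|Xw_h-X^hw_h\|_{L^p(\Omega;\mathbb{R}^m)}+\|X^hw_h-X^h\tilde u_h\|_{L^p(\Omega;\mathbb{R}^m)}+\|X^h\tilde u_h\|_{L^p(\Omega;\mathbb{R}^m)}$ is uniformly bounded. Invoking \eqref{rk} and \Cref{relkon}, I would extract a further subsequence along which $w_h\to\bar v$ strongly in $L^p(\Omega)$; together with the first displayed estimate this yields $\tilde u_h\to\bar v$ strongly in $L^p(\Omega)$, and uniqueness of weak limits forces $\bar v=u-\varphi$, whence $u_h\to u$ strongly in $L^p(\Omega)$.

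To close the argument, along a further subsequence $Xw_h\rightharpoonup\xi$ weakly in $L^p(\Omega;\mathbb{R}^m)$, and by the strong convergence $w_h\to u-\varphi$ in $L^p(\Omega)$ combined with the distributional definition of the $X$-gradient one has $\xi=X(u-\varphi)$. Coupling this with the three displayed estimates gives $X^h\tilde u_h\rightharpoonup X(u-\varphi)$ weakly, and adding back $X^h\varphi_h\rightharpoonup X\varphi$ from \eqref{quasichiusuradebolehpmapercomp} delivers $X^hu_h\rightharpoonup Xu$, so $w=Xu$. An application of \Cref{stranachiusura} then identifies $u\in W^{1,p}_{X,\varphi}(\Omega)$. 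The hard part of the proof is the simultaneous realization of the three approximation properties above; it is precisely the calibration of the mollifier scale $\sigma(h)$ in \eqref{mollificatorigiusti} against the modulus of continuity $\|\mathcal{C}^h-\mathcal{C}\|_\infty$ from \eqref{modulocontmatrici} that makes the third estimate attainable while keeping the first two under control.
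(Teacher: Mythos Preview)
Your proposal is correct and follows essentially the same strategy as the paper: reduce to the homogeneous case by subtracting $\varphi_h$, approximate each $\tilde u_h$ in the $W^{1,p}_{X^h}$-norm by a compactly supported smooth function, mollify with the calibrated kernels $J_h$, and use the quantitative estimate from \Cref{premeyers} to transfer the $X^h$-gradient bound to an $X$-gradient bound so that \Cref{relkon} applies. The only cosmetic difference is that the paper treats the case $\varphi_h\equiv 0$ first and then reduces the general case to it, obtaining $u-\varphi\in W^{1,p}_{X,0}(\Omega)$ directly from weak closedness rather than via \Cref{stranachiusura}; your route through \Cref{stranachiusura} is equally valid.
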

\begin{proof}
Assume $\varphi_h,\varphi=0$ for any $h\in\N$. By definition, there exists $(v_h)_h\subseteq \mathbf{C}^\infty _c(\Om)$ such that
    \begin{equation}\label{stimazerocarcomp}
       \|u_h-v_h\|_{L^p(\Om)}\leq\frac{1}{h}\qquad\text{and}\qquad \|X^h u_h-X^hv_h\|_{L^p(\Om;\R^m)}\leq \frac{1}{h}
    \end{equation}
 for any $h\in\N$. Let $(J_h)_h$ be the sequence of mollifiers defined by \eqref{mollificatorigiusti}. Arguing as in the proof of \Cref{MeySerAffine}, we may assume that $\supp v_h+\supp J_h\Subset\Om$ and $\sigma(h)\leq 1$ for any $h\in\N$, where $\sigma$ is defined in \eqref{modulocontmatrici}. For any $h\in\N,$ set $\tilde v_h=J_h\ast v_h$. Then $(\tilde v_h)_h\subseteq \mathbf{C}^\infty _c(\Om)$. Moreover, arguing again as in the proof of \Cref{MeySerAffine}, we may assume that   \begin{equation}\label{stimaunocarcomp}
       \|v_h-\tilde v_h\|_{L^p(\Om)}\leq\frac{1}{h}\qquad\text{and}\qquad \|X^h v_h-X^h\tilde v_h\|_{L^p(\Om;\R^m)}\leq \frac{1}{h}
    \end{equation}
    and    \begin{equation}\label{stimaduecarcomp}
        \|X^h\tilde v_h-X\tilde v_h\|^p_{L^p(\Om;\R^m)}\leq |\Om|\, C(J,n)^p(\omega_n)^{p-1}\sigma(h)^{p(2n+2)-n-p(n+1)}\| v_h\|^p_{L^p(\Om)}.
    \end{equation}
    Combining \eqref{numeripen}, \eqref{boundedhpincompnew}, \eqref{stimazerocarcomp}, \eqref{stimaunocarcomp} and \eqref{stimaduecarcomp}, $(\tilde v_h)_h$ is bounded in $W^{1,p}_X(\Om)$. Therefore, by \eqref{rk} and reflexivity, there exists $u\in W^{1,p}_{X,0}(\Om)$ such that, up to a subsequence, $\tilde v_h\to u$ strongly in $L^p(\Om)$ and $X\tilde v_h\to X u$ weakly in $L^p(\Om;\R^m)$. By \eqref{stimazerocarcomp} and \eqref{stimaunocarcomp}, $u_h\to u$ strongly in $L^p(\Om)$. Moreover, again by \eqref{stimazerocarcomp}, \eqref{stimaunocarcomp} and \eqref{stimaduecarcomp}, $X^hu_h\to Xu$ weakly in $L^p(\Om;\R^m)$. Finally, we argue in the case of arbitrary $(\varphi_h)_h$ and $\varphi$ satisfying \eqref{quasichiusuradebolehpmapercomp}.  Let $(u_h)_h\subseteq W^{1,p}_{X^h,\varphi_h} (\Omega)$ satisfy \eqref{boundedhpincompnew}. Then, by \eqref{quasichiusuradebolehpmapercomp}, $(u_h-\varphi_h)_h\subseteq W^{1,p}_{X_h,0}(\Om)$ satisfies \eqref{boundedhpincompnew} with a possibly bigger $M>0$. Therefore, there exists $v\in W^{1,p}_{X,0}(\Om)$ such that $u_h-\varphi_h\to v$ strongly in $L^p(\Om)$ and $X^hu_h-X^h\varphi_h\to Xv$ weakly in $L^p(\Om;\R^m)$. Set $u=v+\varphi$. Then $u\in W^{1,p}_{X,\varphi}(\Om)$, $u_h\to u$ strongly in $L^p(\Om)$ and $X^h u_h\to Xv$ weakly in $L^p(\Om;\R^m)$ by \eqref{quasichiusuradebolehpmapercomp}. The thesis follows.
    \end{proof}

As a corollary of \Cref{nuovothmcompattezza}, we recover the convergence of Rayleigh quotients, in analogy with \Cref{convrayleighs1}, and a uniform Poincaré inequality.
\begin{theorem}[Uniform Poincaré inequality]\label{convrayleight2}
 Let $(X^h)_h$ satisfy \eqref{a0} and \eqref{rk}. Let $1<p<\infty$. 
 Then
\begin{equation}\label{summadue}
  \lim_{h\to\infty}\mathcal R_h=\mathcal R.
\end{equation}
If $\Om$ is connected, then $\mathcal R>0$. In particular, up to a subsequence,
\begin{equation}\label{upinstatement}
    \int_\Om|u|^p\,{\rm d}x\leq\frac{2}{\mathcal R}\int_\Om|X^h u|^p\,{\rm d}x
\end{equation}
for any $h\in\N$ and any $u\in W^{1,p}_{X_h,0}(\Om)$. Finally, if $(\varphi_h)_h\subseteq W^{1,p}_{X^h}(\Om)$, then 
\begin{equation}\label{Poincarenonhomogeneousuniform}
\int_\Om|u|^p\,{\rm d}x\leq \frac{2^{2p-1}}{\mathcal R}\int_\Om|X^h u|^p\,{\rm d}x+\frac{2^{2p-1}}{\mathcal R}\int_\Om|X^h \varphi_h|^p\,{\rm d}x+2^{p-1}\int_\Om|\varphi_h|^p\,{\rm d}x
\end{equation}
for any $h\in\N$ and any $u\in W^{1,p}_{X_h,\varphi_h}(\Om)$.
\end{theorem}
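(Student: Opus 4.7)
The plan is to handle the three claims in sequence, leveraging the static Poincaré inequality (\Cref{poincaprelprop}) together with the uniform Rellich-Kondrachov result (\Cref{nuovothmcompattezza}) proved just above, which is exactly the compactness tool we need to pass the Rayleigh infima to the limit.

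First I would establish that $\mathcal R>0$ when $\Om$ is connected. Since \eqref{rk} is assumed, $X\in\mathcal M$, and \Cref{poincaprelprop} gives a constant $c_{p,\Om}>0$ with $c_{p,\Om}\int_\Om|u|^p\,{\rm d}x\leq\int_\Om|Xu|^p\,{\rm d}x$ for all $u\in W^{1,p}_{X,0}(\Om)$. Taking the infimum over nonzero $u\in\mathbf{C}^\infty_c(\Om)\subseteq W^{1,p}_{X,0}(\Om)$ yields $\mathcal R\geq c_{p,\Om}>0$. The $\limsup$ half of \eqref{summadue} is then obtained exactly as in the proof of \Cref{convrayleighs1}: for any fixed test function $u\in\mathbf{C}^\infty_c(\Om)$, \eqref{a0} implies $X^hu\to Xu$ uniformly on $\Om$, so $\limsup_h\mathcal R_h\leq \int_\Om|Xu|^p/\int_\Om|u|^p$, and optimizing over $u$ gives $\limsup_h\mathcal R_h\leq\mathcal R$.

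The core step is the $\liminf$ direction. I would select, for each $h$, an almost-minimizer $u_h\in\mathbf{C}^\infty_c(\Om)$ with $\|u_h\|_{L^p(\Om)}=1$ and $\int_\Om|X^hu_h|^p\,{\rm d}x\leq \mathcal R_h+\tfrac{1}{h}$. From the $\limsup$ bound just established, $\mathcal R_h$ is eventually bounded, so the sequence $(u_h)_h$ satisfies the hypotheses of \Cref{nuovothmcompattezza} with $\varphi_h=\varphi\equiv 0$. Hence there exists $u\in W^{1,p}_{X,0}(\Om)$ with, up to subsequence, $u_h\to u$ strongly in $L^p(\Om)$ and $X^hu_h\to Xu$ weakly in $L^p(\Om;\R^m)$. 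The strong convergence forces $\|u\|_{L^p(\Om)}=1$, so $u\neq 0$, and the weak lower semicontinuity of the $L^p$-norm yields
\begin{equation*}
\mathcal R\leq\frac{\int_\Om|Xu|^p\,{\rm d}x}{\int_\Om|u|^p\,{\rm d}x}=\int_\Om|Xu|^p\,{\rm d}x\leq\liminf_{h\to\infty}\int_\Om|X^hu_h|^p\,{\rm d}x\leq\liminf_{h\to\infty}\mathcal R_h,
\end{equation*}
completing \eqref{summadue}. This is the step I expect to be the main obstacle, as the test functions live in anisotropies that vary with $h$, and only \Cref{nuovothmcompattezza} is strong enough to produce a limit candidate with the correct regularity.

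For \eqref{upinstatement}, since $\mathcal R_h\to\mathcal R>0$, a subsequence satisfies $\mathcal R_h\geq\mathcal R/2$; dividing through in the definition of $\mathcal R_h$ gives the stated homogeneous Poincaré inequality. Finally, to obtain \eqref{Poincarenonhomogeneousuniform}, I would apply \eqref{upinstatement} to $u-\varphi_h\in W^{1,p}_{X_h,0}(\Om)$ and then expand via the elementary convexity bound $(a+b)^p\leq 2^{p-1}(a^p+b^p)$ twice: once to estimate $\int_\Om|u|^p$ against $\int_\Om|u-\varphi_h|^p+\int_\Om|\varphi_h|^p$, and once to bound $\int_\Om|X^h(u-\varphi_h)|^p$ against $\int_\Om|X^hu|^p+\int_\Om|X^h\varphi_h|^p$. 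Assembling the constants produces exactly the factor $2^{2p-1}/\mathcal R$ on the $X^h$-gradient terms and $2^{p-1}$ on $\int_\Om|\varphi_h|^p$, matching the statement.
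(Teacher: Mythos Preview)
Your proposal is correct and follows essentially the same route as the paper: the $\limsup$ half via \eqref{a0} on fixed test functions (as in \Cref{convrayleighs1}), the $\liminf$ half by normalizing almost-minimizers and invoking \Cref{nuovothmcompattezza}, then deducing \eqref{upinstatement} from $\mathcal R_h\to\mathcal R>0$ and \eqref{Poincarenonhomogeneousuniform} by the standard $2^{p-1}$ expansion. The only cosmetic difference is that the paper establishes $\mathcal R>0$ after \eqref{summadue} rather than before, and defers the non-homogeneous inequality to a citation rather than spelling out the two applications of $(a+b)^p\leq 2^{p-1}(a^p+b^p)$ as you do.
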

\begin{proof}
Arguing \emph{verbatim} as in the proof of \Cref{convrayleighs1}, 
\begin{equation}\label{limsuprfac}
    \limsup_{h\to\infty}\mathcal R_h\leq\mathcal R.
\end{equation} 
Let us prove the converse inequality. By definition. for any $h\in\N$ there exists $u_h\in \mathbf{C}^\infty _c(\Om)$ such that $u_h\neq 0$ and 
\begin{equation}\label{raylimhfac}
    \frac{\int_\Om|X^hu_h|^p\,{\rm d}x}{\int_\Om|u_h|^p\,{\rm d}x}\leq \mathcal R_h+\frac{1}{h}.
\end{equation}
For any $h\in \N$, define $ v_h=\left(\|u_h\|_{L^p(\Om)}\right)^{-1}u_h.$
Notice that, by \eqref{raylimhfac},
    \begin{equation}\label{raylimh2fac}
    \frac{\int_\Om|X^hv_h|^p\,{\rm d}x}{\int_\Om|v_h|^p\,{\rm d}x}=\frac{\int_\Om|X^hu_h|^p\,{\rm d}x}{\int_\Om|u_h|^p\,{\rm d}x}\leq \mathcal R_h+\frac{1}{h}.
\end{equation}
Moreover, by definition, $(v_h)_h\subseteq \mathbf{C}^\infty _c(\Om)$, $\|v_h\|_{L^p(\Om)}=1$
for any $h\in\N$ and
\begin{equation*}   \limsup_{h\to\infty}\|X^hv_h\|^p_p=\limsup_{h\to\infty}\frac{\int_\Om|X^hu_h|^p\,{\rm d}x}{\int_\Om|u_h|^p\,{\rm d}x}\leq\limsup_{h\to\infty}\left(\mathcal R_h+\frac{1}{h}\right)\leq\mathcal R
\end{equation*}
by \eqref{limsuprfac}. 
Therefore, by 
\Cref{nuovothmcompattezza},
up to a subsequence there exists $v\in W^{1,p}_{X,0}(\Om)$ such that $v_h\to v$ strongly in $L^p(\Om)$ and $X^hv_h\to Xv$ weakly in $L^p(\Om;\R^m)$. 
Moreover, $\|v\|_{L^p(\Om)}=1$, so that $v\neq 0$. 
Then, we conclude that
\begin{equation*}
    \mathcal R\leq \frac{\int_\Om|Xv|^p\,{\rm d}x}{\int_\Om|v|^p\,{\rm d}x}\leq\liminf_{h\to\infty}\frac{\int_\Om|X^hv_h|^p\,{\rm d}x}{\int_\Om|v_h|^p\,{\rm d}x}\leq \liminf_{h\to\infty}\left(\mathcal R_h+\frac{1}{h}\right)=\liminf_{h\to\infty}\mathcal R_h,
\end{equation*}
whence \eqref{summadue}. The fact that $\mathcal R>0$ and \eqref{upinstatement} follow by \Cref{poincaprelprop} and \eqref{summadue}. Finally, \eqref{Poincarenonhomogeneousuniform} is straightforward (cf. e.g. \cite[Corollary 2.6]{Capogna2024}).
\end{proof}
\section{\texorpdfstring{$\Gamma$}{Gamma}-convergence for functionals depending on moving anisotropies}\label{Section6}
In this section we establish several $\Gamma$-convergence properties for sequences of integral functionals depending on a sequence $(X^h)_h$ belonging to either $\classuno$ or $\classdue$.
Let us recall some basic preliminaries. For a complete account to $\Gamma$-convergence, as well as for the vocabulary of local functionals, we refer the reader to \cite{MR1968440,MR1201152}.
We just recall that if $(\mathcal X,\tau)$ is a first-countable topological space, a sequence of functionals $(F_h)_h:\mathcal X\longrightarrow [0,\infty]$ is said to \emph{$\Gamma(\tau)$-converge} to a functional $F:\mathcal X\longrightarrow [0,\infty]$ if the following two conditions hold.
\begin{itemize}
    \item For any $u\in\mathcal X$ and any sequence $(u_h)_h$ converging to $u$ in $\tau$, then
    \begin{equation}\label{liminfineq}\tag{liminf}
        F(u)\leq\liminf_{h\to\infty}F_h(u_h).
    \end{equation}
    \item For any $u \in \mathcal X$, there exists a sequence $(u_h)_h$ converging to $u$ in $\tau$ such that
    \begin{equation}\label{recodef}\tag{limsup}
        F(u)\geqslant \limsup_{h\to\infty}F_h(u_h).
    \end{equation}
\end{itemize}
Sequences for which \eqref{recodef} holds are known as \emph{recovery sequences}. 
Equivalently, defining the \emph{$\Gamma$-lower limit} and \emph{$\Gamma$-upper limit} respectively by
 \begin{equation*}
    F'(u)\coloneqq \Gamma-\liminf_{h\to\infty}F_h(u)\coloneqq\inf\left\{\liminf_{h\to\infty}F_h(u_h)\,:\,u_h \to u \text{ in }\tau \right\}
 \end{equation*}
 and
 \begin{equation*}
     F''(u)\coloneqq\Gamma-\limsup_{h\to\infty}F_h(u)\coloneqq\inf\left\{\limsup_{h\to\infty}F_h(u_h)\,:\,u_h\to u\text{ in }\tau\right\},
 \end{equation*}
  $(F_h)_h$ $\Gamma$-converges to $F:(X,\tau)\scu[0,\infty]$ if and only if
 \begin{equation*}
     \Gamma-\liminf_{h\to\infty}F_h(u)=\Gamma-\limsup_{h\to\infty}F_h(u)=F(u)
 \end{equation*}
 for any $u\in X$. We say that $F$ is the \emph{$\Gamma-$limit of $(F_h)_h$} and we write $F=\Gamma-\lim_{h\to\infty}F_h$.
Throughout this section, we fix a sequence $(X^h)_h\subseteq\classuno\cup\classdue$, an open and bounded set $\Om\subseteq\R^n$ and $1<p<\infty$. The wider class of functionals we are interested in is made of (local) integral functionals of the form 
\begin{align*}
    F_h(u,A)\coloneqq\displaystyle{\begin{cases}
    \int_\Om f_h(x,X^hu(x)){\rm d}x&\text{ if }A\in\mA,\,u\in L^p(\Om)\cap W^{1,p}_{X^h}(A),\\
    \infty&\text{ otherwise in $L^p(\Om)$},
\end{cases}}
\end{align*}
where each integrand $f_h$ belongs to the following class.
\begin{definition}
Fix $h\in \N$. $\mathcal{I}^h_p(a,d_1,d_2)$ is the class of functions $f_h:\,\Omega\times\R^m\to\R$ such that: 
\begin{itemize}
\item[$(I_1)$] $f_h:\,\Om\times\R^m\to\R$ is a Carathéodory function;
\item[$(I_2)$] for a.e. $x\in\Om$, the function $f_h(x, \cdot):\,\R^m\to\R$ is convex;
\item[$(I_3)$] there exist two  constants $0<d_1\leq\,d_2$ and a non-negative function $a\in L^1(\Omega)$ such that
\begin{equation}\label{3.2}
    d_1\,|\C^h(x)\xi|^p\leq f_h(x,\C^h(x)\xi)\leq a(x)+d_2\left|\C^h(x)\xi\right|^p
\end{equation}
for a.e. $x\in\Om$ and for every $\xi\in\R^n$.
\end{itemize}
\begin{remark}
    If $F_h$ is the (local) integral functional associated with some $f_h\in\mathcal{I}^h_p(a,d_1,d_2)$, then \cite[Theorem 4.2]{withoutlic} implies that 
    \begin{equation}\label{crescitahhhh}
    d_1\|X^hu\|^p_{L^p(A)}\leq F_h(u,A)\leq\int_A a(x)\,{\rm d}x+d_2\|X^hu\|^p_{L^p(A)}
\end{equation}
for any $A\in\mA$ and any $u\in W^{1,p}_{X^h}(A)$.
\end{remark}
\end{definition}
\subsection{\texorpdfstring{$\Gamma$}{Gamma}-convergence of norms}
Our first result focuses on the special case of norms. \Cref{normgammaconv} will be propaedeutic to the proof of the main compactness theorem, \Cref{gammathmwithproof}.
\begin{theorem}\label{normgammaconv}
Let $(X^h)_h\in\classuno\cup\classdue$. Let $1<p<\infty$.
For $h\in\N$, set $\Psi_p^h:L^p(\Om)\times\mA\longrightarrow[0,\infty]$ and $\Psi_p:L^p(\Om)\times\mA\longrightarrow[0,\infty]$ respectively by
\begin{equation*}
    \Psi_p^h(u,A)=\displaystyle{\begin{cases}
    \int_{A} |X^hu|^p\,d x&\text{ if }A\in\mA,\,u\in W^{1,p}_{X^h}(A),\\
    \infty&\text{otherwise},
    \end{cases}}
\end{equation*}
and
\begin{equation*}
    \Psi_p(u,A)=\displaystyle{\begin{cases}
    \int_{A} |Xu|^p\,d x&\text{ if }A\in\mA,\,u\in W^{1,p}_{X}(A),\\
    \infty&\text{otherwise}.
    \end{cases}}
\end{equation*}
Then,
\begin{equation*}
    \Psi_p(\cdot,A)=\Gamma(L^p)-\lim_{h\to\infty}\Psi_p^h(\cdot,A)
\end{equation*}
for any $A\in\mA$.
\end{theorem}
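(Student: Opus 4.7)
The plan is to verify the two standard $\Gamma$-convergence conditions separately, with $A \in \mathcal{A}$ held fixed. The main ingredients are the lower semicontinuity results \Cref{primacasofacile} (in class $\classuno$) and \Cref{limitinsobolev} (in class $\classdue$), applied with $A$ in place of $\Om$, together with the uniform Meyers-Serrin approximation \Cref{meyser}, which will supply the recovery sequence. Since $(X^h)_h$ satisfies \eqref{a0} on $\Om$, its restriction to $A$ also satisfies \eqref{a0}, and both the Sobolev spaces $W^{1,p}_{X^h}(A)$ and all cited results make sense on $A$.

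For the $\Gamma$-liminf inequality, I would take $u_h \to u$ strongly in $L^p(\Om)$. If $\liminf_h \Psi_p^h(u_h, A) = +\infty$ there is nothing to prove; otherwise, along a subsequence realizing the liminf, $u_h|_A \in W^{1,p}_{X^h}(A)$ and $(\|X^h u_h\|_{L^p(A;\R^m)})_h$ is bounded. Since $u_h|_A \to u|_A$ strongly, hence weakly, in $L^p(A)$, in the case $(X^h)_h \in \classuno$ I would apply \Cref{primacasofacile} directly on $A$ to conclude $u|_A \in W^{1,p}_X(A)$ and $\|Xu\|_{L^p(A;\R^m)} \leq \liminf_h \|X^h u_h\|_{L^p(A;\R^m)}$. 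In the case $(X^h)_h \in \classdue$, reflexivity yields a further subsequence with $X^h u_h \rightharpoonup v$ weakly in $L^p(A;\R^m)$, and \Cref{limitinsobolev}(2) — crucially exploiting the strong $L^p$-convergence of $u_h$ — identifies $v = Xu$ and yields the same inequality. Raising to the $p$-th power gives $\Psi_p(u, A) \leq \liminf_h \Psi_p^h(u_h, A)$.

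For the $\Gamma$-limsup inequality I must exhibit a recovery sequence. When $\Psi_p(u, A) = +\infty$, the constant sequence $u_h \equiv u$ trivially works. When $\Psi_p(u, A) < +\infty$, the restriction $u|_A$ lies in $W^{1,p}_X(A)$ and \Cref{meyser}, applied on $A$, produces $(\tilde u_h)_h \subseteq \mathbf{C}^\infty(A) \cap W^{1,p}_{X^h}(A)$ with $\tilde u_h \to u|_A$ strongly in $L^p(A)$ and $X^h \tilde u_h \to Xu$ strongly in $L^p(A;\R^m)$. I would then extend each $\tilde u_h$ by $u$ on $\Om \setminus A$, obtaining a sequence $u_h \in L^p(\Om)$ with $u_h \to u$ strongly in $L^p(\Om)$ and
$\Psi_p^h(u_h, A) = \int_A |X^h \tilde u_h|^p\,{\rm d}x \to \int_A |Xu|^p\,{\rm d}x = \Psi_p(u, A)$, which is the desired inequality.

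No serious obstacle is expected: the whole proof reduces to a clean combination of the functional results of \Cref{Section3} and \Cref{Section4}. The only mild points of care are that the lower semicontinuity lemma to invoke in the liminf step differs depending on whether $(X^h)_h \in \classuno$ or $(X^h)_h \in \classdue$, forcing a short case distinction, and that the recovery sequence must be extended globally on $\Om$ from its construction on $A$, which is handled painlessly by the extend-by-$u$ procedure above.
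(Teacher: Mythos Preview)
Your proposal is correct and follows essentially the same approach as the paper: the liminf inequality is obtained via \Cref{primacasofacile} or \Cref{limitinsobolev} depending on whether $(X^h)_h\in\classuno$ or $(X^h)_h\in\classdue$, and the recovery sequence is supplied by \Cref{meyser}. Your version is in fact slightly more explicit than the paper's, which organizes the argument by the dichotomy $u\in W^{1,p}_X(A)$ versus $u\notin W^{1,p}_X(A)$ and leaves the extension of the recovery sequence from $A$ to $\Om$ implicit.
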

\begin{proof}
Fix $A\in\mA$ and $u\in L^p(\Om)$.
Assume first that $u\notin W^{1,p}_X(A)$.
Then, $\Psi_p(u,A)=\infty,$
so that \eqref{recodef} follows with $u_h=u$.
On the other hand, if there exists $(u_h)_h\in L^p(\Om)$ such that $u_h\to u$ in $L^p(\Om)$ and $\liminf_{h\to\infty}\Psi_p^h(u_h,A)<\infty$, then either \Cref{primacasofacile} or \cref{limitinsobolev} would imply that $u\in W^{1,p}_{X}(A)$, a contradiction. 
Instead, assume $u\in W^{1,p}_X(A)$. 
The existence of a sequence as in \eqref{recodef} follows at once from \cref{meyser}, while \eqref{liminfineq} follows as above in view of either \Cref{primacasofacile} or \cref{limitinsobolev}.
\end{proof}
\begin{remark}
    Arguing as in the proof of \cref{normgammaconv}, we may show $\Gamma$-convergence of complete moving anisotropic Sobolev norms.
\end{remark}

\subsection{\texorpdfstring{$\Gamma$}{Gamma}-compactness in the general case}

Owing to \Cref{normgammaconv}, we can prove compactness for arbitrary classes of integral functionals. 
\begin{theorem}\label{gammathmwithproof}
Let $(X^h)_h\in\classuno\cup\classdue$. Let $1<p<\infty$. Let $a\in L^1(\Om)$, $a\geq 0$, and $0< d_1\leq d_2$.
Let $(f^h)_h\subseteq\mathcal{I}^h_p(a,d_1,d_2)$. Denote by $(F_h)_h:L^p(\Om)\times\mA\longrightarrow [0,\infty]$ the corresponding sequence of integral functionals, each represented by
\begin{equation}\label{708funzionaleduedef24}
    F_h(u,A)=\displaystyle{\begin{cases}
    \int_{A} f_h(x,X^hu(x))\,d x&\text{if }A\in\mA,\,u\in W^{1,p}_{X^h}(A),\\
    \infty&\text{otherwise}.
    \end{cases}}
\end{equation}
Then there exist $F:L^p(\Om)\times\mA\longrightarrow [0,\infty]$ and $f\in\mathcal{I}^\infty_p(a,d_1,d_2)$ such that, up to a subsequence,
\begin{equation}\label{gammaconvstatement}
    F(\cdot,A)=\Gamma(L^p)-\lim_{h\to\infty}F_{h}(\cdot,A)
\end{equation}
for any $A\in\mA$, and $F$ is represented by
\begin{equation}\label{708funzionaleduedef243}
    F(u,A)=\displaystyle{
    \begin{cases}
    \int_{A} f(x,Xu(x))\,d x&\text{if }A\in\mA,\,u\in W^{1,p}_{X}(A),\\
    \infty&\text{otherwise}.
\end{cases}}
\end{equation}
Moreover, $f$ can be uniquely chosen in $\mathcal{I}^\infty_p(a,d_1,d_2)$ in such a way that \begin{equation}\label{costonkeralbefabio}
    f(x,\eta)=f(x,\C(x)\xi_\eta)
\end{equation}
for a.e. $x\in\Om$ and any $\eta\in\R^m$, where $\xi_\eta$ is any vector in $\R^n$ such that 
\begin{equation}\label{ortodecomp}
    \eta=\C(x)\xi_\eta+\tilde\eta\qquad\text{and}\qquad\langle\C(x)\xi_\eta,\tilde\eta\rangle_{\R^m}=0\qquad\text{for some $\tilde \eta\in\R^m$.}
\end{equation}
\end{theorem}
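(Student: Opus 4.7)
The plan is to combine the general compactness of $\Gamma$-convergence with the anisotropic integral representation machinery already available in the static case, using the new uniform approximation and norm-$\Gamma$-convergence results as the glue that makes this transfer through the moving sequence of anisotropies.

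First, since $L^p(\Om)$ is separable and metrizable, the abstract compactness of $\Gamma$-convergence together with a diagonal argument over a countable family generating $\mA$ produces a (non-relabelled) subsequence and a functional $F:L^p(\Om)\times\mA\longrightarrow[0,\infty]$ such that $F(\cdot,A)=\Gamma(L^p)\text{-}\lim_h F_h(\cdot,A)$ for every $A\in\mA$. Inner regularity of $F$ in $A$, which is needed to invoke the Buttazzo--Dal Maso type representation, is obtained through a fundamental estimate whose cut-off and gluing steps in the present setting rely essentially on \Cref{meyser}, since one must interpolate between functions in the varying finiteness domains $W^{1,p}_{X^h}(\Om)$.

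Next, the growth bounds of $f_h$ transfer to $F$ via the norm-$\Gamma$-convergence of \Cref{normgammaconv} and the Meyers--Serrin approximation of \Cref{meyser}. The lower estimate in \eqref{3.2} gives $F_h(u_h,A)\ge d_1\Psi_p^h(u_h,A)$; passing to the $\Gamma$-liminf and applying \Cref{normgammaconv} yields
\begin{equation*}
F(u,A)\ge d_1\int_A|Xu|^p\,dx\qquad\text{for }u\in W^{1,p}_X(A),
\end{equation*}
and forces $F(u,A)=\infty$ for $u\notin W^{1,p}_X(A)$. For the upper estimate, given $u\in W^{1,p}_X(A)$, the sequence $(u_h)_h$ supplied by \Cref{meyser} is used as a recovery sequence and the upper bound in \eqref{3.2} yields
\begin{equation*}
F(u,A)\le\int_A a(x)\,dx+d_2\int_A|Xu|^p\,dx.
\end{equation*}
With these bounds, together with $L^p$-lower semicontinuity (automatic for $\Gamma$-limits) and locality and translation invariance (inherited from $(F_h)_h$), $F$ satisfies the hypotheses of the anisotropic integral representation theorem proved in \cite{withoutlic}, which produces a Carathéodory integrand $f$, convex in the second variable, with $F(u,A)=\int_A f(x,Xu(x))\,dx$. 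Since $Xu(x)\in\im\C(x)$ a.e., the integrand is determined by $F$ only on $\im\C(x)$; we extend it uniquely to all of $\R^m$ by the projection formula \eqref{costonkeralbefabio}--\eqref{ortodecomp}, giving the canonical representative in $\mathcal{I}^\infty_p(a,d_1,d_2)$.

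The hard part will be the transfer of the \emph{structural properties} of the functionals (locality, convexity in the gradient, measure-like behaviour in $A$, and the precise growth constants) from $(F_h)_h$ to $F$ in a framework where each $F_h$ lives on its own domain $W^{1,p}_{X^h}(\Om)$. The classical arguments, which rely on gluing or convex-combining smooth functions with coincident finiteness domains, must now be reworked so that the approximating objects remain admissible along the moving sequence of anisotropies. The uniform Meyers--Serrin approximation of \Cref{meyser} is precisely the tool that unlocks this: it allows one to replace admissible competitors for $F_h$ with smooth functions whose $X^h$-gradients converge to the $X$-gradient of a target in $W^{1,p}_X$, and thereby bridges the gap between the varying Sobolev spaces in all the steps of the proof.
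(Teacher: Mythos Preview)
Your outline has the right high-level architecture (compactness, growth transfer via \Cref{normgammaconv}, then the integral representation from \cite{withoutlic}), but it skips the one step where this theorem actually differs from the classical static case, and it misattributes the role of \Cref{meyser}.

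The claim that ``abstract compactness of $\Gamma$-convergence together with a diagonal argument over a countable family generating $\mA$'' yields $\Gamma(L^p)$-convergence for \emph{every} $A\in\mA$ is not justified as written: a diagonal extraction only gives convergence on the countable family, and extending this to all open sets requires precisely the measure-theoretic machinery you defer to later. The paper handles this by first extracting a subsequence that $\overline\Gamma$-converges (via \cite[Theorem~19.5]{MR1201152}, after checking $(F_h)_h$ lies in a suitable class $\clam$), and then \emph{upgrading} $\overline\Gamma$-convergence to $\Gamma$-convergence on each $A$. That upgrade (cf.\ \cite[Proposition~18.6]{MR1201152}) normally needs a uniform upper bound $F_h(u,A)\le \tilde G(u,A)$ with $\tilde G$ \emph{independent of $h$}; here one only has $F_h(u,A)\le \int_A a\,{\rm d}x + d_2\Psi_p^h(u,A)$ with an $h$-dependent right-hand side. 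The paper's point (spelled out in \Cref{spiegoneuno}) is that \Cref{normgammaconv} makes $(\Psi_p^h)_h$ itself $\Gamma$-converge, and this weaker condition is still enough to run the inner-approximation argument that promotes $\overline\Gamma$ to $\Gamma$. Your proposal does not isolate this step, so the passage from ``subsequence on a countable family'' to ``$\Gamma$-convergence for every $A$'' has a genuine gap.

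Secondly, the uniform fundamental estimate does \emph{not} require \Cref{meyser}: it follows from the uniform $L^\infty$-bound on $(\C^h)_h$ coming from \eqref{a0}, exactly as in \cite[Proposition~3.3]{MR4566142}. The Meyers--Serrin approximation enters only indirectly, through \Cref{normgammaconv}, which is used (i) to transfer the growth bounds \eqref{crescita} to $F$ and (ii) in the $\overline\Gamma\to\Gamma$ upgrade just described. Your description of the upper growth bound via a \Cref{meyser} recovery sequence and of translation invariance and locality being inherited is correct and matches the paper's final step.
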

\begin{remark}\label{spiegoneuno}
   Although reminiscent of the approach presented in \cite{MR1201152}, ours differs from the former.
Indeed, the classical approach to $\Gamma$-compactness consists of three main steps.
\begin{enumerate}
    \item Given a sequence of integral functionals $(F_h)_h$, extract a subsequence, say again $(F_h)_h$ which $\bar\Gamma$-converges to a local functional $F$, where $\bar \Gamma$-convergence (cf. \cite[Chapter 16]{MR1201152}) is a variational convergence tailored to local functionals.
    \item Upgrade $\bar\Gamma$-convergence to $\Gamma$-convergence.
    \item Show that $F$ is an integral functional.
\end{enumerate}
The proof of (2) (cf. e.g. \cite[Proposition 18.6]{MR1201152}) relies on uniform growth conditions like
\begin{equation*}
    F_h(u,A)\leq \tilde G(u,A)
\end{equation*}
for a suitable lower-semicontinuous measure $G$.
On the other hand, we can only dispose of non-uniform growth conditions of the form
\begin{equation*}
    F_h(u,A)\leq \tilde G_h(u,A)\coloneqq\int_A a(x)\,{\rm d}x+d_2\|X^hu\|^p_{L^p(A)}.
\end{equation*}
Nevertheless, \Cref{normgammaconv} ensures that $(\tilde G_h)_h$ $\Gamma$-converges. We will show that this weaker framework will be still sufficient to promote $\bar\Gamma$-convergence to $\Gamma$-convergence. In addition, once $\Gamma$-convergence is achieved, in order to conclude we will need to apply the anisotropic integral representation result \cite[Theorem 4.1]{withoutlic}.
\end{remark}
\begin{proof}[Proof of \Cref{gammathmwithproof}]
\textbf{Step 1.} According to \cite[Theorem 19.4]{MR1201152}, $F:\lp{\Om}\times\mA\scu[0,\infty]$ belongs to $\clam$ if $F$ is a measure and if there exist $e_1\geq 1$, $e_2,e_3,e_4\geq 0$, a finite measure $\mu$, independent of $F$, and a measure $G:\lp{\Om}\times\mA\scu[0,\infty]$, which may depend on $F$, such that
\begin{equation}\label{708ufe1}
    G(u,A)\leq F(u,A)\leq e_1 G(u,A)+e_2\|u\|^p_{\lp{A}}+\mu(A)
\end{equation}
and
\begin{equation}\label{708ufe2}
    G(\varphi u+(1-\varphi)v,A)\leq e_4(G(u,A)+G(v,A))+e_3e_4(\max|D\varphi|^p)\|u-v\|_{L^p(A)}+\mu(A),
\end{equation}
for any $u,v\in\lp{\Om}$, 
 any $A\in\mA$ and any $\varphi\in \mathbf{C}^\infty _c(\Om)$ such that $0\leq\varphi\leq 1$.
We claim that $(F_h)_h\subseteq\clam$. Fix $h\in\N$, and set $\mu(B)\coloneqq\int_Ba(x) \, {\rm d} x.$
Then $\mu$ is a finite measure on $\Om$. Moreover, in view of \cite[Lemma 4.14]{MR4108409}, the non-negative local functional $G_h\coloneqq d_1\Psi^h_p(u,A)$
is a measure. Hence, letting $e_1\coloneqq\frac{d_2}{d_1}$ and $e_2\coloneqq0$, \eqref{708ufe1} follows from \eqref{crescitahhhh}. 
Moreover, recalling \eqref{a0} and arguing \emph{verbatim} as in the proof of \cite[Proposition 3.3]{MR4566142}, \eqref{708ufe2} follows for suitable constants $e_3,e_4\geq 0$ independent of $h\in\N$.
\\
\textbf{Step 2.}
By \cite[Theorem 19.5]{MR1201152}, 
there exists an $L^p$-lower semicontinuous functional $F\in\clam$ such that, up to a subsequence, $(F_h)_h$ $\overline{\Gamma}(L^p)$-converges to $F$. By definition of $\clam$, $F$ is a measure. Moreover, by \cite[Proposition 16.15]{MR1201152}, $F$ is local.\\
\textbf{Step 3.}
We claim that
\begin{equation}\label{crescita}
    d_1\|Xu\|^p_{L^p(A)}\leq F(u,A)\leq\int_A a(x)\,{\rm d}x+d_2\|Xu\|^p_{L^p(A)}
\end{equation}
for any $A\in\mA$ and any $u\in W^{1,p}_X(A)$. Indeed, let $G\coloneqq d_1\Psi_p$. Since $(G_h)_h\subseteq\clam$, we apply again \cite[Theorem 19.5]{MR1201152} to infer that, up to a further subsequence, $(G_h)_h$ $\overline{\Gamma}(L^p)$-converges to a suitable functional in $\clam$. On the other hand, we know from \cref{normgammaconv} that 
\begin{equation}\label{boundsmeasure}
    G(\cdot,A)=\Gamma(L^p)-\lim_{h\to\infty}G_h(\cdot,A)
\end{equation}
for any $A\in\mA$. Then, \cite[Proposition 16.4]{MR1201152} implies that $(G_h)_h$ $\overline{\Gamma}(L^p)$-converges to $G$. Therefore, passing to the $\overline{\Gamma}(L^p)$-limit in $ G_h(u,A)\leq F_h(u,A)\leq e_1 G_h(u,A)+\mu(A)$
allows to show \eqref{crescita}. \\
\textbf{Step 4.}
We claim that \begin{equation}\label{gammainproof}
    F(\cdot,A)=\Gamma(L^p)-\lim_{h\to\infty}F_h(\cdot,A)
\end{equation}
for any $A\in\mA$. To this aim, fix $A\in \mA$ and $u\in W^{1,p}_X(A)\cap L^p(\Om)$. 
We claim that
\begin{equation}\label{quasigammaconv}
    F(u,A)=F'(u,A)=F''(u,A).
\end{equation}
By \cite[Proposition 16.4]{MR1201152}, we already know that $F(u,A)\leq F'(u,A)\leq F''(u,A)$, so that we are left to show that $F''(u,A)\leq F(u,A)$. Following the proof of \cite[Proposition 18.6]{MR1201152}, and noticing that $\nu(B)\coloneqq\int_Ba\,{\rm d}x+d_2\Psi_p(u,B)$ is a finite measure on $A$, for any $\varepsilon>0$ we choose a compact set $K_\varepsilon\Subset A$ such that $\nu(A\setminus K_\varepsilon)\leq\varepsilon$. Moreover, recalling \eqref{boundsmeasure} and by \cite[Proposition 6.7]{MR1201152}, we infer that
\begin{equation}\label{stimaconmisura}
    F''(u,\tilde A)\leq \nu(\tilde A)
\end{equation}
for any $\tilde A\in\mA$. Choose two open sets $A',A''\in\mA$ such that $K_\varepsilon\Subset A'\Subset A''\Subset A.$
Since $(F_h)_h\subseteq\clam$, \cite[Theorem 19.4]{MR1201152} implies that $(F_h)_h$ satisfies the \emph{uniform fundamental estimate} (cf. \cite[Definition 18.2]{MR1201152}). Therefore, noticing that $  A=A'\cup(A\setminus K_\varepsilon),$
we can apply \cite[Proposition 18.3]{MR1201152} to infer that
\begin{align*}
    F''(u,A)
    \leq F''(u,A'')+F''(u,A\setminus K_\varepsilon)
    \leq F(u,A)+\nu(A\setminus K_\varepsilon)
    \leq F(u,A)+\varepsilon,
\end{align*}
where in the semi-last inequality we exploited \cite[Remark 16.1]{MR1201152} together with \eqref{stimaconmisura}. Letting $\varepsilon\to 0$, \eqref{quasigammaconv} follows. 
We are ready to prove \eqref{gammainproof}. To this aim, fix $A\in\mA$ and $u\in L^p(\Om)$. If $u\in W^{1,p}_X(A),$ the thesis follows from \eqref{quasigammaconv}. Conversely, assume that $u\notin W^{1,p}_X(A)$, or equivalently that $G(u,A)=\infty$. Exploiting \eqref{boundsmeasure}, $G(u,A)\leq F'(u,A)$. Then, by \cite[Remark 14.5]{MR1201152} $G(u,A)\leq F(u,A)$, so that $F(u,A)=\infty$. Since $F(u,A)\leq F'(u,A)\leq F''(u,A)$, we conclude that $F(u,A)=F'(u,A)=F''(u,A)$, whence \eqref{gammainproof} follows. \\
\textbf{Step 5.}
By \eqref{gammainproof}, $F$ is $L^p$-lower semicontinuous. Moreover, arguing \emph{verbatim} as in the proof of \cite[Theorem 4.3]{withoutlic},
   $F(u+c,A)=F(u,A)$
for any $A\in\mA$, any $u\in \mathbf{C}^\infty (A)$ and any $c\in\R$.
In conclusion, $F$ satisfies all the hypotheses of \cite[Theorem 4.1]{withoutlic}, whence the thesis follows.
\end{proof}
\begin{remark}
    The very same approach to \Cref{gammathmwithproof}, together with the integral representation results provided in \cite{withoutlic}, allows to establish $\Gamma$-compactness results for integral functionals associated with complete Lagrangians $f(x,u,\eta)$ under suitable natural assumptions (cf. \cite{withoutlic}).
\end{remark}
\subsection{\texorpdfstring{$\Gamma$}{Gamma}-convergence for functionals with Dirichlet boundary conditions}

\color{black}
\cref{gammathmwithproof} can be exploited to keep into account Dirichlet boundary conditions.


\begin{theorem}\label{mainthmdirichletbc}
 Let $(X^h)_h\in \classuno\cup\classdue$. 
Let $1<p<\infty$. Let $(f^h)_h\subseteq\mathcal{I}^h_p(a,d_1,d_2)$ and $f\in \mathcal{I}^\infty_p(a,d_1,d_2)$, and let $F_h,F:L^p(\Om)\times\mA\to[0,\infty]$ be the corresponding integral functionals, respectively defined in \eqref{708funzionaleduedef24} and \eqref{708funzionaleduedef243}.
Fix $(\varphi_h)_h\subseteq W^{1,p}_{X^h}(\Om)$ and $\varphi\in W^{1,p}_{X}(\Om)$ such that 
\begin{equation}\label{quasichiusuradebolehpduetre}
        \varphi_h\to\varphi\text{ strongly in }L^p(\Om)\qquad\text{and}\qquad X^h\varphi_h\to X\varphi\text{ strongly in }L^p(\Om;\R^m).
    \end{equation}
Define the functionals $F^{\varphi_h}_h,F^\varphi:L^p(\Om)\to[0,\infty]$ by
\begin{equation}\label{affinelocalfunctionalsquadratic}
    F^{\varphi_h}_h(u)=\begin{cases}
    F_h(u,\Om)\quad&\text{if }u\in W^{1,p}_{X^h,\varphi_h}(\Om),\\
    \infty\quad&\text{otherwise},
    \end{cases}\qquad F^{\varphi}(u)=\begin{cases}
    F(u,\Om)\quad&\text{if }u\in W^{1,p}_{X,\varphi}(\Om),\\
    \infty\quad&\text{otherwise}.
    \end{cases}
\end{equation}
Assume that 
\begin{equation}\label{gammalimsupdirichletno}
     F(u,\Om)= \Gamma(L^p)-\lim_{h\to\infty}F_{h}(u,\Om).
\end{equation}
Then
\begin{equation}\label{gammalimsupdirichlet}
     F^\varphi(u)= \Gamma(L^p)-\lim_{h\to\infty}F^{\varphi_h}_{h}(u).
\end{equation}
\end{theorem}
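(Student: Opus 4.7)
The plan is to establish the two $\Gamma$-convergence inequalities separately, handling the moving boundary data by gluing two auxiliary sequences.

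\medskip

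\emph{Liminf inequality.} I would fix $u\in L^p(\Om)$ and a sequence $(u_h)_h$ with $u_h\to u$ in $L^p(\Om)$, and assume $\liminf_h F^{\varphi_h}_h(u_h)<\infty$ (otherwise nothing to prove). After extracting a subsequence realising the liminf, one has $u_h\in W^{1,p}_{X^h,\varphi_h}(\Om)$, and the coercivity in \eqref{crescitahhhh} bounds $(X^h u_h)_h$ in $L^p(\Om;\R^m)$; reflexivity then provides a further extraction along which $X^h u_h\rightharpoonup v$ weakly in $L^p(\Om;\R^m)$. Applying \Cref{primacasofacile} (if $(X^h)_h\in\classuno$) or \Cref{limitinsobolev} (if $(X^h)_h\in\classdue$) together with the strong convergence $u_h\to u$ identifies $v=Xu$ and gives $u\in W^{1,p}_X(\Om)$. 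The hypothesis \eqref{quasichiusuradebolehpduetre} activates \Cref{stranachiusura}, forcing $u\in W^{1,p}_{X,\varphi}(\Om)$. The bound $F^\varphi(u)=F(u,\Om)\leq\liminf_h F_h(u_h,\Om)=\liminf_h F^{\varphi_h}_h(u_h)$ then follows from the unconstrained $\Gamma$-convergence \eqref{gammalimsupdirichletno}.

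\medskip

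\emph{Limsup: two adapted approximations.} For the limsup, fix $u\in W^{1,p}_{X,\varphi}(\Om)$. Assumption \eqref{gammalimsupdirichletno} furnishes a recovery sequence $v_h\to u$ in $L^p(\Om)$ with $\limsup_h F_h(v_h,\Om)\leq F(u,\Om)$, but nothing ensures $v_h\in W^{1,p}_{X^h,\varphi_h}(\Om)$. In parallel, applying \Cref{MeySerAffine} to $u-\varphi$ yields $(w_h)_h\subseteq\mathbf{C}^\infty_c(\Om)$ with $w_h\to u-\varphi$ and $X^h w_h\to X(u-\varphi)$ strongly in their respective $L^p$-spaces. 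Combined with \eqref{quasichiusuradebolehpduetre}, the auxiliary sequence $\tilde u_h\coloneqq w_h+\varphi_h$ lies in $W^{1,p}_{X^h,\varphi_h}(\Om)$, converges strongly to $u$ in $L^p(\Om)$, satisfies $X^h\tilde u_h\to Xu$ strongly in $L^p(\Om;\R^m)$, and—by the upper growth in \eqref{crescitahhhh}—obeys $\limsup_h F_h(\tilde u_h,B)\leq\int_B a(x)\,{\rm d}x+d_2\|Xu\|^p_{L^p(B;\R^m)}$ for every $B\in\mA$.

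\medskip

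\emph{Gluing via the fundamental estimate, and the main obstacle.} The crux is to merge $v_h$ (right energy, wrong trace) with $\tilde u_h$ (right trace, only crude energy control). Step 1 of the proof of \Cref{gammathmwithproof} shows $(F_h)_h\subseteq\clam$, so the uniform fundamental estimate of \cite[Proposition 18.3]{MR1201152} applies. Given $\sigma>0$, I would choose compact sets $K\Subset K'\Subset\Om$ with $\int_{\Om\setminus K}(a+d_2|Xu|^p)\,{\rm d}x\leq\sigma$, and apply the fundamental estimate to produce a cut-off $\psi\in\mathbf{C}^\infty_c(\Om)$ with $\supp\psi\subseteq K'$, $\psi\equiv 1$ on $K$, $0\leq\psi\leq 1$, for which the glued sequence $z_h\coloneqq\psi v_h+(1-\psi)\tilde u_h$ satisfies
\begin{equation*}
F_h(z_h,\Om)\leq(1+\sigma)\bigl(F_h(v_h,K')+F_h(\tilde u_h,\Om\setminus\overline{K})\bigr)+C\sigma+\omega_h,
\end{equation*}
where $\omega_h\to 0$ because the strong convergence $\|v_h-\tilde u_h\|_{L^p(\Om)}\to 0$ absorbs the $|D\psi|^p$-defect term provided by the $\clam$-structure. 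A direct computation gives $z_h-\varphi_h=\psi(v_h-\varphi_h)+(1-\psi)w_h$: both summands have compact support in $\Om$, so $z_h\in W^{1,p}_{X^h,\varphi_h}(\Om)$, and clearly $z_h\to u$ in $L^p(\Om)$. Passing to $\limsup$ and using the two energy bounds yields $\limsup_h F^{\varphi_h}_h(z_h)\leq(1+\sigma)F(u,\Om)+C'\sigma$, and a diagonal extraction as $\sigma\downarrow 0$ produces the desired recovery sequence. The principal difficulty lies precisely in reconciling the moving boundary constraint with the energy asymptotics through the cut-off: without the uniform affine approximation \Cref{MeySerAffine} there would be no sequence $\tilde u_h$ adapted to $(X^h)_h$ against which to glue, and without the $\clam$-structure the cut-off would introduce an uncontrollable derivative defect that the weaker strong $L^p$-convergence $v_h-\tilde u_h\to 0$ could not absorb.
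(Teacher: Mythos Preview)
Your proof is correct and mirrors the paper's almost exactly: the same liminf argument via \Cref{stranachiusura}, and the same limsup strategy of gluing the unconstrained recovery sequence against the auxiliary sequence $w_h+\varphi_h$ furnished by \Cref{MeySerAffine} through the uniform fundamental estimate available from the $\clam$-structure. Two minor imprecisions worth noting: the uniform fundamental estimate (cf.\ \cite[Definition 18.2]{MR1201152}) produces $h$-dependent cut-offs $\psi_h$ rather than a single $\psi$, and the paper verifies $z_h\in W^{1,p}_{X^h,\varphi_h}(\Om)$ by an explicit static Meyers--Serrin approximation of $v_h-\varphi_h$ rather than your compact-support shortcut---though your argument is valid, since any $W^{1,p}_{X^h}(\Om)$-function with compact support lies in $W^{1,p}_{X^h,0}(\Om)$ by standard mollification.
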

\begin{proof}
  First we prove that  $F^\varphi(u)\leq  \Gamma(L^p)-\liminf_{h\to\infty}F^{\varphi_h}_{h}(u)$. Let $u\in L^p(\Om)$, and let $(u_h)_h\subseteq L^p(\Om)$ be such that $u_h\to u$ strongly in $L^p(\Om)$. If $\liminf_{h\to\infty}F^{\varphi_h}_h(u_h)=\infty$, \eqref{liminfineq} follows. 
    Assume the contrary. In this way, we may suppose that $u_h\in W^{1,p}_{X^h,\varphi_h}(\Om)$ for any $h\in\N$. By \eqref{crescitahhhh}, $(X^h u_h)_h$ is bounded in $L^p(\Om;\R^m)$. Therefore, up to subsequences and by either \Cref{primacasofacile} or \Cref{limitinsobolev}, $X^hu_h\to Xu$ weakly in $L^p(\Om;\R^m)$. Therefore, by \Cref{stranachiusura}, $u\in  W^{1,p}_{X,\varphi}(\Om)$. In this way,
    \begin{equation*}
        F^\varphi(u)=F(u,\Om)\overset{\eqref{gammalimsupdirichletno}}{\leq}\liminf_{h\to\infty} F_h(u_h,\Om)=\liminf_{h\to\infty}F_h^{\varphi_h}(u_h),
    \end{equation*}
    whence the first claim follows. Now we prove that  $F^\varphi(u)\geq  \Gamma(L^p)-\limsup_{h\to\infty}F^{\varphi_h}_{h}(u)$. Let $u\in L^p(\Om)$. Fix $\varepsilon>0$. Again, we may assume that $u\in W^{1,p}_{X,\varphi}(\Om)$. Hence, by \Cref{MeySerAffine} there exists a sequence $(v_h)_h\subseteq \mathbf{C}^\infty _c(\Om)$ such that $v_h\to u-\varphi$ strongly in $L^p(\Om)$ and $X^h v_h\to X(u-\varphi)$ strongly in $L^p(\Om;\R^m)$. Therefore, recalling \eqref{crescitahhhh}, up to a subsequence
    \begin{equation*}
        \begin{split}
            F_h(v_h&+\varphi_h,A)\leq\int_Aa\,{\rm d}x+d_2\int_A|X^h v_h+X^h\varphi_h|^p\,{\rm d}x\\
            &\leq \int_Aa\,{\rm d}x+2^{p-1}d_2\int_A|Xu|^p\,{\rm d}x+2^{p-1}d_2\int_A\left(|X^h v_h-X(u-\varphi)|^p+|X^h \varphi_h-X\varphi|^p\right)\,{\rm d}x\\
            &\overset{\eqref{quasichiusuradebolehpduetre}}{\leq}\int_Aa\,{\rm d}x+2^{p-1}d_2\int_A|Xu|^p\,{\rm d}x+\frac{\varepsilon}{2}
        \end{split}
    \end{equation*}
    for any $A\in\mA$. Therefore, there exists a compact set $K_\varepsilon\subseteq\Om$ such that
    \begin{equation}\label{stimaepsilon}
        F_h(v_h+\varphi_h,\Om\setminus K_\varepsilon)\leq\varepsilon.
    \end{equation}
    By \eqref{gammalimsupdirichletno} there exists $(u_h)_h\subseteq W^{1,p}_{X^h}(\Om)$ such that $u_h\to u$ strongly in $L^p(\Om)$ and $F(u,\Om)\geq\limsup_{h\to\infty}F_h(u_h,\Om)$.
    Let $A',A''\in\mA$ be such that $K_\varepsilon\subseteq A'\Subset A''\Subset\Om$. By the proof of \Cref{gammathmwithproof}, we know that $(F_h)_h$ satisfies the uniform fundamental estimate as in \cite[Definition 18.2]{MR1201152}. Therefore, there exists $M>0$ and a sequence of smooth cut-off functions between $A'$ and $A''$, say $(\psi_h)_h$, such that, up to a subsequence,
    \begin{equation}\label{stimadastimafond}
    \begin{split}
           F_h(\psi_h u_h&+(1-\psi_h)(v_h+\varphi_h),\Om)\leq (1+\varepsilon)\left(F_h(u_h,\Om)+F_h(v_h+\varphi_h,\Om\setminus K)\right)\\
           &\quad+\varepsilon\left(\|u_h\|_{L^p(\Om)}^p+\|v_h+\varphi_h\|^p_{L^p(\Om)}\right)+M\|u_h-(v_h+\varphi_h)\|^p_{L^p(\Om)}\\
           &\overset{\eqref{quasichiusuradebolehpduetre},\eqref{stimaepsilon}}{\leq} (1+\varepsilon)F_h(u_h,\Om)+\varepsilon(1+\varepsilon)+\varepsilon\left(2\|u\|_{L^p(\Om)}^p+1\right)+M\varepsilon.
    \end{split}
    \end{equation}
    Set $w_h\coloneqq\psi_h u_h+(1-\psi_h)(v_h+\varphi_h)$ for any $h\in\N$. Notice that $w_h\to u$ strongly in $L^p(\Om)$. Fix $h\in N$. As $u_h-\varphi_h\in W^{1,p}_{X^h}(\Om)$, by \Cref{MeySer} there exists a sequence $(u^h_k)_k\subseteq \mathbf{C}^\infty (\Om)\cap W^{1,p}_{X^h}(\Om)$ such that $u^h_k\to u_h-\varphi_h$ strongly in $L^p(\Om)$ and $X^hu^h_k\to X^hu_h-X^h\varphi_h$ strongly in $L^p(\Om;\R^m)$. Notice that 
    \begin{equation*}
        w_h-\varphi_h=\psi_h(u_h-\varphi_h)+(1-\psi_h)v_h.
    \end{equation*}
    For any $k\in\N$, set $w^h_k=\psi_hu^h_k+(1-\psi_h)v_h$. As $v_h\in \mathbf{C}^\infty _c(\Om)$,  $(w^h_k)_k\subseteq \mathbf{C}^\infty _c(\Om) $. Since
    \begin{equation*}
       w^h_k-(w_h-\varphi_h)=\psi_h(u^h_k-(u_h-\varphi_h)),
    \end{equation*}
    we conclude that $w^h_k\to w_h-\varphi_h$ strongly in $L^p(\Om)$ and $X^hw^h_k\to X^h(w_h-\varphi_h)$ strongly in $L^p(\Om;\R^m)$. Therefore $w_h\in W^{1,p}_{X^h,\varphi_h}(\Om)$, and in particular $ F_h(w_h,\Om)=F_h^{\varphi_h}(w_h)$. Combining this information with \eqref{stimadastimafond}, we infer
    \begin{equation}\label{stimautilissimaepsilon}
        F^{\varphi_h}_h(w_h)\leq (1+\varepsilon)F_h(u_h,\Om)+\varepsilon(1+\varepsilon)+\varepsilon\left(2\|u\|_{L^p(\Om)}^p+1\right)+M\varepsilon,
    \end{equation}
whence
\begin{equation*}
\begin{split}
      \Gamma(L^p)-\limsup_{h\to\infty}&F^{\varphi_h}_h(u)\leq\limsup_{h\to\infty}F^{\varphi_h}_h(w_h)\\
      &\overset{\eqref{stimautilissimaepsilon}}{\leq }(1+\varepsilon)\limsup_{h\to\infty}F_h(u_h,\Om)+\varepsilon(1+\varepsilon)+\varepsilon\left(2\|u\|_{L^p(\Om)}^p+1\right)+M\varepsilon\\
      &=(1+\varepsilon)F(u,\Om)+\varepsilon(1+\varepsilon)+\varepsilon\left(2\|u\|_{L^p(\Om)}^p+1\right)+M\varepsilon\\
       &=(1+\varepsilon)F^\varphi(u)+\varepsilon(1+\varepsilon)+\varepsilon\left(2\|u\|_{L^p(\Om)}^p+1\right)+M\varepsilon.
\end{split}
\end{equation*}
Being $\varepsilon$ arbitrary, the thesis follows.
\end{proof}
\begin{remark}\label{spiegonedue}
    Although reminiscent of the proof of \cite[Theorem 21.1]{MR1201152}, our argument needs to differ from the former. Indeed, as the finiteness domain of $F_h^{\varphi_h}$ depends on $h$, the limit function $u$ in the proof of \eqref{recodef} cannot be used to construct a recovery sequence, but has to be properly well-approximated. On the other hand, our approach has allowed to consider a sequence of evolving boundary data instead of a fixed one.
\end{remark}
\subsection{Convergence of minima and minimizers}
Once \Cref{nuovothmcompattezza}, \Cref{convrayleight2} and \Cref{mainthmdirichletbc} are established, we argue \emph{verbatim} as in the Euclidean setting to infer convergence of minima and minimizers. In view of \Cref{Section7}, we additionally take into account a continuous perturbation.
\begin{theorem}\label{minimaminimizerthm}
    Let $(X^h)_h\in \classunoraff\cup\classdueraff$. 
Let $1<p<\infty$. Let $(f^h)_h\subseteq\mathcal{I}^h_p(a,d_1,d_2)$ and $f\in \mathcal{I}^\infty_p(a,d_1,d_2)$.
Let $(\varphi_h)_h\subseteq W^{1,p}_{X^h}(\Om)$ and $\varphi\in W^{1,p}_{X}(\Om)$ satisfy \eqref{quasichiusuradebolehpduetre}.
Let $F^\varphi_h,F^\varphi:L^2(\Om)\to[0,\infty]$ be as in \eqref{affinelocalfunctionalsquadratic}.
Assume that \eqref{gammalimsupdirichlet} holds. 
Let $G:L^p(\Om)\to\R$ be  continuous and such that, for any $\varepsilon>0$, there are $\delta_1=\delta_1(\varepsilon)>0$, $\delta_2=\delta_2(\varepsilon)>0$ and $\delta_3=\delta_3(\varepsilon)>0$ such that
\begin{equation}\label{epsilondelta}
    -\delta_1(\varepsilon)-\varepsilon \int_\Om|u|^p\,{\rm d}x\leq G(u)\leq \delta_2(\varepsilon)+\delta_3(\varepsilon)\int_\Om |u|^p\,{\rm d}x
\end{equation}
for any $u\in L^p(\Om)$.
Up to a subsequence, the following properties hold.
\begin{enumerate}
\item  $F^\varphi+G$ has a minimum, and
  \begin{equation*}
        \inf_{L^p(\Om)}(F^{\varphi_h}_h+G)\to  \min_{L^p(\Om)}(F^{\varphi}+G). 
    \end{equation*}
    \item Assume that there exists a sequence $(u_h)_h\subseteq W^{1,p}_{X^h,\varphi_h}(\Om)$ such that, for any $h\in N$, $u_h$ minimizes $F^{\varphi_h}_h+G$. Then there exist a minimum point $u$ of $F^\varphi+G$ such that
    \begin{equation*}
         u_{h}\to u \text{ strongly in $L^p(\Om)$}. 
    \end{equation*}
\end{enumerate} 
\end{theorem}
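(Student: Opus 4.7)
The plan is to follow the classical strategy for convergence of minima via $\Gamma$-convergence, adapted to account for the moving anisotropies and the evolving boundary data. The argument splits into three natural steps.

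First, I would note that continuity of $G$ on $L^p(\Om)$, combined with \eqref{gammalimsupdirichlet} and the standard stability of $\Gamma$-convergence under continuous perturbations (cf. \cite[Proposition 6.21]{MR1201152}), immediately yields
\[
F^{\varphi}+G=\Gamma(L^p)-\lim_{h\to\infty}(F^{\varphi_h}_h+G).
\]

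Second, and this is the technical core, I would establish equi-coercivity of the perturbed family. Assume that $(u_h)_h\subseteq L^p(\Om)$ satisfies $F^{\varphi_h}_h(u_h)+G(u_h)\le M$; then necessarily $u_h\in W^{1,p}_{X^h,\varphi_h}(\Om)$, and \eqref{crescitahhhh} gives $d_1\|X^hu_h\|^p_{L^p(\Om;\R^m)}\le F^{\varphi_h}_h(u_h)\le M-G(u_h)$. The left-hand inequality in \eqref{epsilondelta} controls $-G(u_h)$ by $\delta_1(\varepsilon)+\varepsilon\|u_h\|^p_{L^p(\Om)}$, while the uniform Poincaré inequality \eqref{Poincarenonhomogeneousuniform} further bounds $\|u_h\|^p_{L^p(\Om)}$ by a multiple of $\|X^hu_h\|^p_{L^p(\Om;\R^m)}+\|X^h\varphi_h\|^p_{L^p(\Om;\R^m)}+\|\varphi_h\|^p_{L^p(\Om)}$. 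Since $(\varphi_h)_h$ and $(X^h\varphi_h)_h$ are bounded in the relevant $L^p$ norms by \eqref{quasichiusuradebolehpduetre}, choosing $\varepsilon$ small enough to reabsorb the $\varepsilon$-term into the coercive contribution $d_1\|X^hu_h\|^p_{L^p(\Om;\R^m)}$ yields uniform bounds on both $\|u_h\|_{L^p(\Om)}$ and $\|X^hu_h\|_{L^p(\Om;\R^m)}$. An application of \Cref{nuovothmcompattezza} then produces a subsequence that converges strongly in $L^p(\Om)$ to some $u\in W^{1,p}_{X,\varphi}(\Om)$, which is precisely equi-coercivity of $(F^{\varphi_h}_h+G)_h$ on $L^p(\Om)$.

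Finally, equipped with $\Gamma$-convergence and equi-coercivity, both (1) and (2) follow at once from the fundamental theorem of $\Gamma$-convergence (cf. \cite[Corollary 7.20]{MR1201152}): the limit functional $F^\varphi+G$ admits a minimum, the infima $\inf_{L^p(\Om)}(F^{\varphi_h}_h+G)$ converge to $\min_{L^p(\Om)}(F^\varphi+G)$, and any sequence of minimizers is precompact in $L^p(\Om)$ with every cluster point realizing $\min_{L^p(\Om)}(F^\varphi+G)$. The main obstacle is the equi-coercivity step, where the delicate point is that both the coercivity estimate \eqref{crescitahhhh} and the Poincaré constant must be effective \emph{uniformly} along the sequence $(X^h)_h$; this uniformity is exactly what \Cref{convrayleight2} grants under the additional assumption \eqref{rk} encoded in the classes $\classunoraff$ and $\classdueraff$.
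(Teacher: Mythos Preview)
Your proposal is correct and follows essentially the same approach as the paper: stability of $\Gamma$-convergence under continuous perturbations (\cite[Proposition 6.21]{MR1201152}), followed by uniform energy bounds obtained by combining \eqref{crescitahhhh}, \eqref{epsilondelta}, the uniform Poincaré inequality \eqref{Poincarenonhomogeneousuniform} and \eqref{quasichiusuradebolehpduetre}, and finally the uniform compactness of \Cref{nuovothmcompattezza}. The only cosmetic difference is that the paper works directly with a sequence of near-minimizers and explicitly bounds $m_h$ from above by testing against $\varphi_h$ before invoking \cite[Theorem 7.2]{MR1201152}, whereas you package the same estimates as abstract equi-coercivity and appeal to \cite[Corollary 7.20]{MR1201152}; the substance is identical.
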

\begin{proof}
     As $G$ is continuous, \eqref{gammalimsupdirichlet} and \cite[Proposition 6.21]{MR1201152} imply that 
    \begin{equation*}\label{gammaconvsum}
        F^\varphi+G=\Gamma(L^p)-\lim_{h\to\infty}\left(F^{\varphi_h}_h+G\right).
    \end{equation*}
    For any $h\in\N$, set $m_h=\inf_{L^p(\Om)}(F^{\varphi_h}_h+G)$. Notice that $m_h\in\R$. Let $u_h\in L^p(\Om)$ be such that $m_h\leq F^{\varphi_h}_h(u_h)+G(u_h)\leq m_h+\frac{1}{h}$. Then, for any $\varepsilon>0$, there exists $\hat M=\hat M(\varepsilon)>0$ such that
    \begin{equation*}
    \begin{split}
           F^{\varphi_h}_h(u_h)+G(u_h)&\leq m_h+1\\
           &\leq F^{\varphi_h}_h(\varphi_h)+G(\varphi_h)+1\\
           &\overset{\eqref{crescitahhhh},\eqref{epsilondelta}}{\leq } \int_\Om a\,{\rm d}x+\delta_2(\varepsilon)+\delta_3(\varepsilon)\int_\Om|\varphi_h|^p\,{\rm d}x+d_2\int_\Om|X^h\varphi_h|^p\,{\rm d}x+1\\
           &\overset{\eqref{quasichiusuradebolehpduetre}}{\leq }\hat M(\varepsilon).
    \end{split}
    \end{equation*}
    In particular, by \Cref{convrayleight2},
    \begin{equation*}
    \begin{split}
        \hat M(\varepsilon)&\geq F^{\varphi_h}_h(u_h)+G(u_h)\\
        &\overset{\eqref{crescitahhhh},\eqref{epsilondelta}}{\geq}d_1\int_\Om|X^hu_h|^p\,{\rm d}x-\varepsilon \int_\Om|u_h|^p\,{\rm d}x-\delta_1(\varepsilon)\\
        & \overset{\eqref{Poincarenonhomogeneousuniform}}{\geq}
        \left(d_1-\frac{\varepsilon 2^{2p-1}}{\mathcal R}\right)\int_\Om|X^h u|^p\,{\rm d}x-\frac{\varepsilon 2^{2p-1}}{\mathcal R}\int_\Om|X^h \varphi_h|^p\,{\rm d}x-\varepsilon 2^{p-1}\int_\Om|\varphi_h|^p\,{\rm d}x-\delta_1(\varepsilon)
    \end{split}    
    \end{equation*}
    Combining the previous inequalities with \Cref{convrayleight2}, $(u_h)_h$ is bounded in $L^p(\Om)$ and $(X^hu_h)_h$ is bounded in $L^p(\Om;\R^m)$. By \Cref{nuovothmcompattezza} and up to a subsequence, there exists $u\in W^{1,p}_{X,\varphi}(\Om)$ such that $u_h\to u$ strongly in $L^p(\Om)$. The rest of the thesis follows as in \cite[Theorem 7.2]{MR1201152}.
\end{proof}
\begin{remark}\label{acosaloapplichiamo}
    For any $\mu\geq 0$ and any $g\in L^q(\Om)$, set $G:L^p(\Om)\to\R$ by
    \begin{equation*}
        G(u)=\frac{\mu}{p}\int_\Om |u|^p\,{\rm d}x-\int_\Om gu\,{\rm d}x
    \end{equation*}
    for any $u\in L^p(\Om)$. It is easy to check that $u$ is continuous and satisfies \eqref{epsilondelta}.
\end{remark}
\subsection{Quadratic forms}
In this section we let $p=2$, and we specialize \cref{gammathmwithproof} to the class of quadratic forms. Moreover, as this will be important in the forthcoming \Cref{Section7}, we provide a convergence result of the so-called \emph{momenta}, defining the latter to keep into account the moving anisotropic setting (cf. \Cref{hconvdef}).

\begin{definition}
Fix $h\in\N$ and $0<\lambda\leq\Lambda$. We denote by $E^h(\Omega;\lambda,\Lambda)$ the class of symmetric, measurable matrix-valued functions $A^h:\Omega\to\R^{m\times m}$ such that
\begin{equation}\label{growthforHconv}
    \lambda|\C^h(x)\xi|^2\leq\langle A^h(x)\C^h(x)\xi,\C^h(x)\xi\rangle_{\R^m}\leq \Lambda|\C^h(x)\xi|^2
\end{equation}
for a.e. $x\in\Om$ and for every $\xi\in\R^n$.
%
\end{definition}
Quadratic forms 
corresponding to 
elements of $E^h(\Omega;\lambda,\Lambda)$ express as
\begin{equation}\label{localfunctionalsquadratic}
    F_h(u,A)=\begin{cases}
    \int_A\langle A^h(x)X^hu(x),X^hu(x)\rangle_{\R^m}{\rm d}x\quad&\text{if }u\in W^{1,2}_{X^h}(A),\\
    \infty\quad&\text{otherwise}.
    \end{cases}
\end{equation}
\begin{theorem}\label{compactnessL2perHconvergenza}
Let $(X^h)_h\in\classuno\cup\classdue$. Let $0<\lambda\leq\Lambda$.
Let $(A^h)_h\subseteq E^h(\Omega;\lambda,\Lambda)$. For any $h\in\N$, denote by $F_h$ the corresponding quadratic form in the sense of \eqref{localfunctionalsquadratic}. There exists $A\in E^\infty(\Omega;\lambda,\Lambda)$ and a corresponding quadratic form $F:L^2(\Om)\times\mA\longrightarrow [0,\infty]$
%
such that, up to subsequences, \eqref{gammaconvstatement} holds.
\end{theorem}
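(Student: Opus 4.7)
The plan is to combine the general $\Gamma$-compactness of \Cref{gammathmwithproof} with the additional quadratic structure carried by each $F_h$, which will then be transferred to the $\Gamma$-limit. Setting $f_h(x,\eta)=\langle A^h(x)\eta,\eta\rangle_{\R^m}$, one sees that $F_h$ depends on $A^h(x)$ only through its restriction to $\mathrm{Im}(\C^h(x))$; so I would first harmlessly replace $A^h(x)$ by $P^h(x)A^h(x)P^h(x)+\lambda(I-P^h(x))$, with $P^h(x)$ the orthogonal projection onto $\mathrm{Im}(\C^h(x))$, in order to force $A^h(x)$ to be positive semidefinite on all of $\R^m$ and to satisfy $\lambda|\eta|^2\le f_h(x,\eta)\le\Lambda|\eta|^2$. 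This places $f_h$ in $\mathcal{I}^h_2(0,\lambda,\Lambda)$, so \Cref{gammathmwithproof} yields, along a subsequence, a functional $F$ satisfying \eqref{gammaconvstatement} and a Lagrangian $f\in\mathcal{I}^\infty_2(0,\lambda,\Lambda)$ representing it as in \eqref{708funzionaleduedef243} and satisfying the normalization \eqref{costonkeralbefabio}. It then remains to show that $f(x,\cdot)$ is a quadratic form.

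Next I would prove that $F(\cdot,A)$ inherits from $F_h(\cdot,A)$ both $2$-homogeneity and the parallelogram identity on $W^{1,2}_X(A)$, for every $A\in\mA$. For $2$-homogeneity: given $u\in W^{1,2}_X(A)$, $t\in\R\setminus\{0\}$, a recovery sequence $(u_h)_h$ for $u$, and a recovery sequence $(w_h)_h$ for $tu$, the $L^2$-convergent test sequences $(tu_h)_h$ and $(w_h/t)_h$, coupled with $F_h(tu_h,A)=t^2F_h(u_h,A)$ and the two $\Gamma$-liminf inequalities, force $F(tu,A)=t^2F(u,A)$. The parallelogram law is obtained symmetrically: the direct inequality $F(u+v,A)+F(u-v,A)\le 2F(u,A)+2F(v,A)$ follows from recovery sequences $(u_h)_h,(v_h)_h$ for $u,v$ via the competitors $(u_h\pm v_h)_h$, while the reverse inequality is extracted by starting from recovery sequences $(w_h)_h,(z_h)_h$ for $u+v,u-v$ and testing $u,v$ with $((w_h\pm z_h)/2)_h$, using the identity $F_h((w_h+z_h)/2,A)+F_h((w_h-z_h)/2,A)=\tfrac12(F_h(w_h,A)+F_h(z_h,A))$ inherited from the quadratic nature of $F_h$.

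The quadratic structure of $F$ is then promoted to a pointwise statement on $f$. For each $\xi\in\R^n$ the affine function $u_\xi(x)=\langle\xi,x\rangle_{\R^n}$ belongs to $W^{1,2}(\Om)\subseteq W^{1,2}_X(\Om)$ and satisfies $Xu_\xi(x)=\C(x)\xi$ thanks to \eqref{euclrapprgrad}. Plugging $u_\xi,u_\eta$ into the two identities for $F(\cdot,A)$ and letting $A$ range over $\mA$ yields, for a.e.\ $x\in\Om$ and every $\xi,\eta\in\R^n$, $t\in\R$,
\begin{equation*}
f(x,\C(x)(\xi+\eta))+f(x,\C(x)(\xi-\eta))=2f(x,\C(x)\xi)+2f(x,\C(x)\eta),\qquad f(x,t\C(x)\xi)=t^2 f(x,\C(x)\xi).
\end{equation*}
Continuity of $f(x,\cdot)$, coming from convexity, identifies $\eta\mapsto f(x,\C(x)\eta)$ with a nonnegative quadratic form on $\R^n$. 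Combined with \eqref{costonkeralbefabio}, polarization (performed first on a countable dense set of vectors and then extended by continuity) produces a measurable, symmetric matrix field $A(x)\in\R^{m\times m}$ with $f(x,\eta)=\langle A(x)\eta,\eta\rangle_{\R^m}$ for every $\eta\in\R^m$; the bounds built into $\mathcal I^\infty_2(0,\lambda,\Lambda)$ translate verbatim into \eqref{growthforHconv}, so $A\in E^\infty(\Om;\lambda,\Lambda)$.

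The hardest part will be the reverse parallelogram inequality: one cannot directly bound $F(u+v,A)$ from above along arbitrary approximations, so the argument must be arranged so as to start from recovery sequences for $u\pm v$ and extract from them competitors for $u$ and $v$; that this works relies crucially on the fact that each $F_h$ is already a quadratic form, so that half-sums and half-differences of recovery sequences carry exactly the right energy. A subsidiary technical point, which I already anticipated, is the global convexity of the integrands $f_h$: this is not guaranteed by the ellipticity \eqref{growthforHconv} alone, but is secured by the orthogonal-projection modification of $A^h$ performed in the first step.
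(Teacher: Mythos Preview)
Your argument is correct and follows the same high-level strategy as the paper---apply \Cref{gammathmwithproof} and then show that the $\Gamma$-limit inherits quadratic structure---but the implementation differs in two places. First, the paper simply asserts $(f^h)_h\subseteq\mathcal I^h_2(0,\lambda,\Lambda)$, tacitly treating the global convexity $(I_2)$ as automatic; your projection trick $A^h\mapsto P^hA^hP^h+\lambda(I-P^h)$ makes this step rigorous without adding hypotheses, and is a genuine improvement. Second, for the quadratic structure of $F$ the paper invokes \cite[Theorem 11.10]{MR1201152} directly, while you reproduce its standard proof (recovery-sequence manipulations for $2$-homogeneity and the parallelogram law). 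Finally, to extract the matrix $A(x)$ the paper detours through an auxiliary $n\times n$ matrix $A_e(x)$ obtained as in \cite[Theorem 22.1]{MR1201152} and then compresses it via the Moore--Penrose pseudoinverse, setting $A(x)=\Cm(x)^TA_e(x)\Cm(x)$; you instead polarize directly on $\mathrm{Im}\,\C(x)$ and extend using the normalization \eqref{costonkeralbefabio}. The two constructions give the same matrix; yours is more self-contained, while the paper's plugs into the pseudoinverse machinery already developed in \cite{withoutlic}.
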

\begin{proof}
Let $f^h$ be the Lagrangian associated with $F_h$.
Then $(f^h)_h\subseteq\mathcal{I}^h_2(0,\lambda,\Lambda)$.
In virtue of \cref{gammathmwithproof}, there exist $f\in\mathcal{I}_2(0,\lambda,\Lambda)$ and a corresponding integral functional $F:L^2(\Om)\times\mA\longrightarrow [0,\infty]$
such that, up to subsequences, \eqref{gammaconvstatement} holds. 
Fix $A\in\mA$. By \eqref{gammaconvstatement} and \cite[Theorem 11.10]{MR1201152}, there is a symmetric bilinear form $B:W^{1,2}_{X}(A)\times W^{1,2}_{X}(A)\longrightarrow \R$ such that
\begin{equation*}
    \int_A f(x,Xu)\,{\rm d}x=B(u,u)
\end{equation*}
for any $u\in W^{1,p}_X(A)$. Arguing \emph{verbatim} as in the proof of \cite[Theorem 22.1]{MR1201152}, there exists a $n\times n$ symmetric, measurable matrix-valued function $A_e$ such that
\begin{align*}
  f(x,\C(x)\xi)=\langle A_e(x)\xi,\xi\rangle_{\R^n}
\end{align*}
for a.e. $x\in\Om$ and for every $\xi\in\R^n$. Arguing as in \cite{MR1417720,withoutlic}, there exists a unique, measurable, $n\times m$ matrix-valued function $\Cm(x)$ such that
    \begin{equation*} \label{pseudoinvproperties}
    \begin{aligned}
        &\Cm(x)\C(x)\Cm(x)=\Cm(x),\qquad \C(x)\Cm(x)\C(x)=\C(x),\\
        &\Cm(x)\C(x)=\C(x)^T\Cm(x)^T,\qquad \C(x)\Cm(x)=\Cm(x)^T\C(x)^T
    \end{aligned}
    \end{equation*} 
    for any $x\in\Om$. Set $A(x)=\Cm(x)^TA_e(x)\Cm(x)$. Fix $x\in\Om$ and $\eta\in\R^m$, and let $\xi_\eta$ be as in \eqref{ortodecomp}. Let
\begin{equation*}\label{N&Vx}
		N_x=\ker(\C(x))\qquad\text{and}\qquad V_x=\left\{\C(x)^T\eta\ :\ \eta\in\mathbb{R}^m\right\}.
	\end{equation*}
	From standard linear algebra,
    there are unique $(\xi_\eta)_{N_x}\in N_x$ and $(\xi_\eta)_{V_x}\in V_x$ such that
	\begin{equation*}\label{splitting}
		\xi_\eta=(\xi_\eta)_{N_x}+\,(\xi_\eta)_{V_x}.
	\end{equation*}
    By \cite[Lemma 3.13]{MR4108409},
    \begin{equation*}
         \langle A_e(x)\xi_\eta,\xi_\eta\rangle_{\R^n}= \langle A_e(x)(\xi_\eta)_{V_x},(\xi_\eta)_{V_x}\rangle_{\R^n}.
    \end{equation*}
    Moreover, by \cite[Proposition 3.1]{withoutlic},
    \begin{equation*}
        (\xi_\eta)_{V_x}=\Cm(x)\C(x)\xi_\eta.
    \end{equation*}
    Therefore
    \begin{equation*}
        \langle A_e(x)\xi_\eta,\xi_\eta\rangle_{\R^n}=\langle A_e(x)\Cm(x)\C(x)\xi_\eta,\Cm(x)\C(x)\xi_\eta\rangle_{\R^n}=\langle A(x)\C(x)\xi_\eta,\C(x)\xi_\eta\rangle_{\R^m},
    \end{equation*}
   whence
   \begin{equation*}
       f(x,\eta)\overset{\eqref{costonkeralbefabio}}{=}f(x,\C(x)\xi_\eta)=\langle A_e(x)\xi_\eta,\xi_\eta\rangle_{\R^n}=\langle A(x)\C(x)\xi_\eta,\C(x)\xi_\eta\rangle_{\R^m}.
   \end{equation*}
   The thesis follows combining the previous equation with \eqref{euclrapprgrad} and \Cref{MeySer}.
\end{proof}
%
%
%
%
%
The next crucial result is preliminary to the convergence of momenta as stated in the forthcoming \Cref{thmmomentaconv}.
\begin{theorem}\label{Thm:perturbed}Let $(X^h)_h\in \classuno\cup\classdue$. 
Let $(A^h)_h\subseteq E^h(\Omega;\lambda,\Lambda)$ and $A\in E^\infty(\Omega;\lambda,\Lambda)$.
Let $(\Phi_h)_h \subseteq L^2(\Om;\R^n)$ and $\Phi\in L^2(\Om;\R^n)$ be such that $\C^h\Phi_h\to\C\Phi\text{ strongly in $L^2(\Om,\rr^m)$.}$
Define $G^{\Phi_h}_h,G^\Phi:L^2(\Om)\times\mA\to[0,\infty]$ by
\begin{equation*}\label{perturbedlocalfunctionalsquadratic}
    G^{\Phi_h}_h(u,A)=\begin{cases}
    \int_A\langle A^h(X^hu+\C^h\Phi_h),X^hu+\C^h\Phi_h\rangle_{\R^m}{\rm d}x\quad&\text{if }u\in W^{1,2}_{X^h}(A),\\
    \infty\quad&\text{otherwise},
    \end{cases}
\end{equation*}
and
\begin{equation*}\label{perturbedlocalfunctionalsquadraticlimit}
    G^\Phi(u,A)=\begin{cases}
    \int_A\langle A(Xu+\C\Phi),(Xu+\C\Phi)\rangle_{\R^m}{\rm d}x\quad&\text{if }u\in W^{1,2}_{X}(A),\\
    \infty\quad&\text{otherwise}.
    \end{cases}
\end{equation*}
If \eqref{gammaconvstatement} holds, then 
\begin{equation}\label{GammalimitG}
    G^\Phi(\cdot,A)=\Gamma(L^2)-\lim_{h\to\infty}G^{\Phi_h}_{h}(\cdot,A).
\end{equation}
for every $A\in\mA$.
\end{theorem}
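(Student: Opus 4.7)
The plan is to expand the perturbed quadratic form as the sum of $F_h$ plus a cross term and a pure $\eta_h$-term, and then to transfer the assumed $\Gamma$-convergence from $F_h$ to $G^{\Phi_h}_h$ by means of polarization. Setting $\eta_h\coloneqq \C^h\Phi_h$ and $\eta\coloneqq \C\Phi$, the hypothesis reads $\eta_h\to\eta$ strongly in $L^2(\Om;\R^m)$, and one has the algebraic identity
\[
G^{\Phi_h}_h(u,A)=F_h(u,A)+2\int_A \langle A^h X^hu,\eta_h\rangle_{\R^m}\,{\rm d}x+\int_A \langle A^h\eta_h,\eta_h\rangle_{\R^m}\,{\rm d}x,
\]
together with the analogous identity for $G^\Phi(u,A)$, reducing matters to controlling the two extra terms.

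For the $\Gamma$-liminf, given $u_h\to u$ in $L^2(\Om)$ with $\liminf G^{\Phi_h}_h(u_h,A)<\infty$, the lower bound in \eqref{growthforHconv} together with the $L^2$-boundedness of $(\eta_h)$ force $(X^hu_h)$ to be bounded in $L^2(A;\R^m)$. By \Cref{primacasofacile} or \Cref{limitinsobolev}, up to a subsequence, $u\in W^{1,2}_X(A)$ and $X^hu_h\rightharpoonup Xu$ weakly in $L^2(A;\R^m)$, so that $X^h u_h+\eta_h\rightharpoonup Xu+\eta$ weakly. The liminf inequality is then derived by exploiting the non-negativity
\[
\int_A \langle A^h(X^hu_h+\eta_h-\alpha_h),X^hu_h+\eta_h-\alpha_h\rangle_{\R^m}\,{\rm d}x\ge 0,
\]
testing against sequences of the form $\alpha_h=X^hv_h+\eta_h$, with $v_h\to v$ a recovery sequence for $F_h$ at a smooth $v$, and using polarization together with \eqref{gammaconvstatement} to identify the limits.

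For the $\Gamma$-limsup, given $u\in W^{1,2}_X(\Om)$, I would extract from \eqref{gammaconvstatement} a recovery sequence $u_h\to u$ in $L^2(\Om)$ with $F_h(u_h,A)\to F(u,A)$ and show it serves as a recovery sequence for $G^{\Phi_h}_h$ as well. To check convergence of the two extra terms, the essential tool is a polarization argument: applying \eqref{gammaconvstatement} to combinations of pairs of recovery sequences $v_h\to v$, $w_h\to w$ yields the bilinear convergence
\[
\int_A \langle A^h X^hv_h,X^hw_h\rangle_{\R^m}\,{\rm d}x \longrightarrow \int_A \langle A\, Xv,Xw\rangle_{\R^m}\,{\rm d}x.
\]
To bridge from this bilinear convergence on $X^h$-gradients to a general $\eta\in L^2(A;\R^m)$, I would approximate $\Phi$ by $\Phi^k\in \mathbf{C}^\infty_c(\Om;\R^n)$, invoke \eqref{a0} so that $\C^h\Phi^k\to\C\Phi^k$ uniformly, employ the uniform Meyers-Serrin approximations of \Cref{Section4}, and conclude via a diagonal extraction.

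The principal obstacle is the cross term $\int_A \langle A^h X^hu_h,\eta_h\rangle_{\R^m}\,{\rm d}x$: the matrices $A^h$ are not assumed to converge in any classical strong or weak sense, so naïve weak-strong duality is unavailable, and the only handle is the assumed $\Gamma$-convergence \eqref{gammaconvstatement}, to be exploited via polarization. A related subtlety is that a generic $\eta\in L^2(A;\R^m)$ need not lie in the closure of the range of $X^h$-gradients, so passing from bilinear convergence along recovery sequences to arbitrary $\Phi\in L^2(\Om;\R^n)$ must be coupled with the smooth approximation machinery developed in \Cref{Section4}.
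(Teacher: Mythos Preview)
Your decomposition $G^{\Phi_h}_h(u,A)=F_h(u,A)+2\int_A\langle A^hX^hu,\eta_h\rangle+\int_A\langle A^h\eta_h,\eta_h\rangle$ is correct, but the plan to handle the two extra terms separately has a genuine gap. The pure $\eta$-term $\int_A\langle A^h\eta_h,\eta_h\rangle\,{\rm d}x$ does \emph{not} in general converge to $\int_A\langle A\eta,\eta\rangle\,{\rm d}x$: the $\Gamma$-limit matrix $A$ produced by \Cref{compactnessL2perHconvergenza} is the $H$-limit of $(A^h)_h$, which is typically different from any weak-$\star$ limit of the coefficients. Strong convergence $\eta_h\to\eta$ in $L^2$ lets you replace $\eta_h$ by $\eta$ in this term, but you are still left with $\int_A\langle A^h\eta,\eta\rangle\,{\rm d}x$, and nothing in the hypotheses forces this to approach $\int_A\langle A\eta,\eta\rangle\,{\rm d}x$ for a fixed $\eta\in L^2(A;\R^m)$. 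The same obstruction hits the cross term: your polarization claim that $\int_A\langle A^hX^hv_h,X^hw_h\rangle\to\int_A\langle A\,Xv,Xw\rangle$ for pairs of recovery sequences would require $v_h+w_h$ to be a recovery sequence for $v+w$, which is not automatic, and your proposed bridge to general $\eta$ via smooth $\Phi^k$ does not help because $\C\Phi^k$ is not an $X$-gradient for arbitrary smooth $\Phi^k$.

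The paper avoids this by never splitting off the pure $\eta$-term. Instead it first treats the case $\Phi_h=\Phi\equiv\xi$ constant, where the crucial identity $\C^h\xi=X^hu_\xi$ with $u_\xi(x)=\langle\xi,x\rangle_{\R^n}$ gives $G^{\Psi}_h(u,A)=F_h(u+u_\xi,A)$; since $u\mapsto u+u_\xi$ is an $L^2$-isometry, \eqref{gammaconvstatement} transfers immediately to $G^{\Psi}_h$. Piecewise constant $\Phi$ follows by locality of the functionals on the partition pieces, and general $\Phi$ by density of piecewise constants in $L^2(\Om;\R^n)$ together with a uniform Lipschitz estimate of $G^{\Phi}$ in $\C\Phi$ (equations \eqref{contest}--\eqref{contest2}). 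The convergence of momenta you were trying to use is then derived in \Cref{thmmomentaconv} as a \emph{consequence} of \Cref{Thm:perturbed}, by differentiating $t\mapsto G^{t\Phi}(u,A)$ at $t=0$; invoking it inside the proof of \Cref{Thm:perturbed} would be circular.
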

\begin{proof}
For any $h\in\N$, let $F_h,F:L^2(\Om)\times\mA\to[0,\infty]$ be the quadratic forms associated with $A^h$ and $A$.
For each $h\in\N$, define 
\[
g_h^{\Phi_h}(x,\eta)\coloneqq\,f_h(x,\eta+\C^h(x)\Phi(x))=\langle A^h(x)(\eta+\C^h(x)\Phi_h(x)),\eta+ \C^h(x)\Phi_h(x)\rangle_{\R^m}
\]
and 
\begin{equation*}
    \tilde g_h^{\Phi_h}(x,\eta)\coloneqq g_h^{\Phi_h}(x,\eta)+\lambda|\C^h(x)\Phi_h(x)|^2
\end{equation*}
for a.e. $x\in\Om$ and for any $\eta\in\R^m$.
Then, up to subsequences,
\begin{equation*}
    \begin{split}
        \tilde g_h^{\Phi_h}(x,\C^h(x)\xi)&=\langle A^h(x)\C^h(x)(\xi+\Phi_h(x)),\C^h(x)(\xi+\Phi_h(x))\rangle_{\R^m}+\lambda|\C^h(x)\Phi_h(x)|^2\\
        &\overset{\eqref{growthforHconv}}{\leq}\Lambda|\C^h(x)(\xi+\Phi_h(x))|^2+\lambda|\C^h(x)\Phi_h(x)|^2\\
        &\leq (2\Lambda+\lambda)|\C^h(x)\Phi_h(x)|^2+2\Lambda|\C^h(x)\xi|^2\\
        &\leq(2\Lambda+\lambda)|\tilde k(x)|+2\Lambda|\C^h(x)\xi|^2
    \end{split}
\end{equation*}
for a suitable $\tilde k\in L^1(\Om)$, where the last passage follows from \cite[Theorem 4.9]{MR2759829}. Similarly,
\begin{equation*}
      \tilde g_h^{\Phi_h}(x,\C^h(x)\xi)\overset{\eqref{growthforHconv}}{\geq}\lambda |\C^h(x)\xi|^2+2\lambda|\\C^h(x)\Phi_h(x)|^2+2\lambda\langle \C^h(x)\xi,\C^h(x)\Phi_h(x)\rangle_{\R^m}
      \geq \frac{\lambda}{2}|\C^h(x)\xi|^2
\end{equation*}
for a.e. $x\in\Om$ and for any $\xi\in\R^n$. Therefore
\begin{equation}\label{ghclass}
    \tilde g_h^{\Phi_h}\in\mathcal{I}^h_2\left((2\Lambda+\lambda)|\tilde k|,\frac{\lambda}{2},2\Lambda\right)
\end{equation}
for every $h\in\N$. Since $\C^h\Phi_h\to\C\Phi$ strongly in $L^2(\Om;\R^n)$,  \Cref{gammathmwithproof} implies that there exists $\hat G^\Phi:\,L^p(\Om)\times\mA\to[0,\infty]$ such that, up to subsequences, 
\begin{align*}
    \hat G^\Phi(\cdot,A)=\Gamma(L^2)-\lim_{h\to\infty}G^{\Phi_h}_{h}(\cdot,A)
\end{align*}
for every $A\in\mA$, and that there exists a function $\hat g^\Phi$ such that
\begin{equation}\label{gPhiinI}
    \hat g^\Phi+\lambda|\C(x)\Phi(x)|^2\in\mathcal{I}_2\left((2\Lambda+\lambda)|\tilde k|,\frac{\lambda}{2},2\Lambda\right)
\end{equation}
and such that $\hat G^\Phi$ can be represented as
\[
\hat G^\Phi(u,A)\coloneqq
\displaystyle{\begin{cases}
\int_{A}\hat g^\Phi(x,Xu(x))\, dx&\text{ if }A\in\mA,\,u\in W^{1,2}_X(A),\\
\infty&\text{ otherwise}.
\end{cases}
}
\]
To conclude, we show that
\begin{equation}\label{thesis}
\hat G^\Phi(u,A)=G^\Phi(u,A)
\end{equation}
for each $A\in\mA$ and $u\in W^{1,2}_{X}(A)$.
We divide the proof of \eqref{thesis} into three steps.
\medskip

\noindent{\bf Step 1.} Let $\xi\in\R^n$. Set  $\Psi^h(x)=\Psi(x)=\xi$ for any $h\in\N$ and any $x\in\Om$. Let $G^{\Psi_h},G^\Psi$ and $\hat G^\Psi$ be the corresponding functionals. If $u_\xi(x)\coloneqq\langle\xi,x\rangle_{\R^n}$ for any $x\in\Rn$, then
\[
X^h u_\xi=\C^h\xi,\qquad Xu_\xi=\C\xi\qquad\text{and}\qquad G_h^{\Psi_h}(u,A)=\,F_h(u+\,u_\xi,A).
\]
Let $(u_h)_h$ be a recovery sequence for $u$ relative to $\hat G^\Psi$. Then
\begin{equation*}
    G^\Psi(u,A)= F(u+u_\xi,A)\leq\liminf_{h\to\infty}F_h(u_h+u_\xi,A)=\liminf_{h\to\infty} G^{\Psi_h}_h(u_h,A)\leq\hat G^\Psi(u,A).
\end{equation*}
Conversely, let $(v_h)_h$ be a recovery sequence for $u+u_\xi$ relative to $F$. Then, as $v_h-u_\xi\to u$ strongly in $L^2(\Om)$,
\begin{equation*}
    \hat G^\Psi(u,A)\leq\liminf_{g\to\infty} G^\Psi_h(v_h-u_\xi,A)=\liminf_{h\to\infty} F_h(v_h,A)\leq F(u+u_\xi,A)=G^\Psi(u,A).
\end{equation*}
Therefore, \eqref{thesis} holds with $\Phi$ replaced by $\Psi$.
\medskip

\noindent{\bf Step 2.} Let  $\Psi_h=\Psi$ be {\it piecewise constant}, i.e., there exist $\xi^1,\dots,\xi^N\in\Rn$ and $A_1,\dots,A_N$ pairwise disjoint open sets such that $|\Om\setminus\cup_{i=1}^N A_i|=\,0$ and
\[ \Psi(x)\coloneqq\,\sum_{i=1}^N\chi_{A_i}(x)\,\xi^i.
\]
Owing to the previous case, \eqref{thesis} follows \emph{verbatim} is in the proof of \cite[Theorem 3.7]{MR4504133}, again replacing $\Phi$ with $\Psi$.
\medskip

\noindent{\bf Step 3.} We are ready to prove \eqref{thesis}
Let $( \Psi_j)_j$ be a sequence of piecewise constant functions converging to $\Phi$ strongly in $L^2(\Om;\R^n)$. In particular, $(\C\Psi_j)_j$ converges to $\C\Phi$ strongly in $L^2(\Om;\R^m)$. Arguing \emph{verbatim} as in the proof of \cite[Theorem 3.7]{MR4504133}, there exists a constant $\tilde c=\tilde c(\lambda,\Lambda)$, such that, up to subsequences,
\begin{equation}\label{contest}
\begin{split}
|G^{\Phi}(u,A)-G^{\Psi_j}(u,A)|\le\tilde c\|\C\Phi-\C\Psi_j\|_{L^2(\Om;\R^m)}\,\left(\|Xu\|_{L^2(\Om;\R^m)}+2\|\C\Phi\|_{L^2(\Om;\R^m)}+1\right)
\end{split}
\end{equation}
and, recalling \eqref{gPhiinI},
\begin{equation}\label{contest2}
\begin{split}
|\hat G^{\Phi}(u,A)-\hat G^{\Psi_j}(u,A)|\le\tilde c\|\C\Phi-\C\Psi_j\|_{L^2(\Om;\R^m)}\,\left(\|Xu\|_{L^2(\Om;\R^m)}+2\|\C\Phi\|_{L^2(\Om;\R^m)}+1\right)
\end{split}
\end{equation}
for any $A\in\mA$, any $u\in W^{1,2}_X(A)$ and any $j\in\N$. Moreover,
\begin{equation*}
\begin{split}
     |\hat G^\Phi(u,A)-G^\Phi(u,A)|&\leq |\hat G^\Phi(u,A)-\hat G^{\Psi_j}(u,A)|+ | \hat G^{\Psi_j}(u,A)-G^{\Psi_j}(u,A)|\\
     &\quad+| G^{\Psi_j}(u,A)-G^\Phi(u,A)|.
\end{split}
\end{equation*}
By \eqref{contest}, \eqref{contest2} and the previous steps, \eqref{thesis} follows.
\end{proof}
We are in position to infer convergence of momenta in the following moving anisotropic sense.
\begin{theorem}\label{thmmomentaconv}
  Let $(X^h)_h\in\classuno\cup\classdue$. Let $(A^h)_h\subseteq E^h(\Omega;\lambda,\Lambda)$. Let $A^\infty\in E^\infty(\Omega;\lambda,\Lambda)$. 
 Assume that \eqref{gammaconvstatement} holds. Then, up to subsequences,
    \begin{equation*}
    \int_A\langle A^\infty X u,\C\Phi\rangle_{\R^m}{\rm d}x  =\lim_{h\to\infty} \int_A\langle A^hX^h u_h,\C^h\Phi_h\rangle_{\R^m}{\rm d}x
    \end{equation*}
    for any $A\in\mA$, any $u\in W^{1,2}_X(A)$, any recovery sequence $(u_h)_h$ relative to $F$, any $\Phi\in L^2(\Om;\R^n)$ and any $(\Phi_h)_h\subseteq L^2(\Om,\rr^n)$ such that $\C^h\Phi_h\to\C\Phi$ strongly in $L^2(\Om,\rr^n)$.
\end{theorem}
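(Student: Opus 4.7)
The plan is to exploit the perturbed $\Gamma$-convergence of \Cref{Thm:perturbed} with a scalar parameter $t\in\R$ in order to set up a family of liminf inequalities, and then to extract the convergence of momenta from the quadratic structure in $t$. Setting
\[
\alpha_h\coloneqq\int_A\langle A^h X^h u_h,\C^h\Phi_h\rangle_{\R^m}\,{\rm d}x,\qquad \beta_h\coloneqq\int_A\langle A^h\C^h\Phi_h,\C^h\Phi_h\rangle_{\R^m}\,{\rm d}x,
\]
and writing $L$ and $M$ for their natural limit counterparts (obtained by replacing $(A^h,X^h,\C^h,\Phi_h)$ with $(A^\infty,X,\C,\Phi)$), the polar expansion of the perturbed quadratic form reads
\[
G_h^{t\Phi_h}(u_h,A)=F_h(u_h,A)+2t\,\alpha_h+t^2\,\beta_h,
\]
so that the thesis is equivalent to showing $\alpha_h\to L$.

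First I would verify that $(\alpha_h)_h$ and $(\beta_h)_h$ are uniformly bounded. The bound on $(\beta_h)_h$ is immediate from \eqref{growthforHconv} together with the $L^2$-boundedness of $(\C^h\Phi_h)_h$. For $(\alpha_h)_h$ I would apply the Cauchy--Schwarz inequality to the pointwise positive semi-definite bilinear form $\langle A^h\cdot,\cdot\rangle_{\R^m}$ restricted to the range of $\C^h(x)$---where both $X^hu_h(x)$ and $\C^h(x)\Phi_h(x)$ lie a.e., the former via density through \Cref{MeySer}---obtaining $|\alpha_h|^2\le F_h(u_h,A)\,\beta_h$. Combined with the fact that $F_h(u_h,A)\to F(u,A)$ (the $\Gamma$-liminf inequality together with the recovery property of $(u_h)_h$), both sequences turn out to be bounded.

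Next, I would apply \Cref{Thm:perturbed} to the sequence $(t\Phi_h)_h$, which still satisfies the strong convergence hypothesis since $\C^h(t\Phi_h)=t\C^h\Phi_h\to t\C\Phi$ strongly in $L^2(\Om;\R^m)$. The $\Gamma$-liminf inequality applied to $u_h\to u$ then gives
\[
\liminf_{h\to\infty}G_h^{t\Phi_h}(u_h,A)\ge G^{t\Phi}(u,A)=F(u,A)+2tL+t^2M
\]
for every $t\in\R$, and subtracting off the already known convergence $F_h(u_h,A)\to F(u,A)$ yields
\[
\liminf_{h\to\infty}\bigl[2t\alpha_h+t^2\beta_h\bigr]\ge 2tL+t^2M\qquad\text{for every } t\in\R.
\]

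The last---and really only---subtle step is to transform this one-parameter family of liminf inequalities into convergence of $(\alpha_h)_h$. The plan is to extract any subsequence along which $\alpha_{h_k}\to\alpha^*$ and $\beta_{h_k}\to\beta^*$, pass to the limit to obtain $2t(\alpha^*-L)+t^2(\beta^*-M)\ge 0$ for every $t\in\R$, and observe that a polynomial in $t$ which vanishes at $t=0$ and is non-negative throughout $\R$ must have vanishing linear coefficient. This forces $\alpha^*=L$ (and, as a byproduct, $\beta^*\ge M$, although this secondary bound is not needed). A standard subsequence argument then upgrades subsequential convergence to $\alpha_h\to L$, which is precisely the thesis. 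I expect this final step to be the main obstacle---or rather, the main trick---since it hinges on exploiting the liminf bound for \emph{every} $t\in\R$, including negative values, and on the quadratic nature of the functional $G_h^{t\Phi_h}$ in the parameter $t$.
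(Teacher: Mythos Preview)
Your proposal is correct and follows essentially the same route as the paper: both arguments apply \Cref{Thm:perturbed} with the parameter $t\Phi_h$, subtract the recovery identity $F_h(u_h,A)\to F(u,A)$, and extract convergence of the cross term from the resulting one-parameter family of liminf inequalities. The only cosmetic difference is in the final extraction---the paper divides by $t$, lets $t\to 0$ using the boundedness of $(\beta_h)_h$, and then replaces $\Phi$ by $-\Phi$ to obtain the matching limsup bound, whereas you pass to subsequential limits and invoke the vanishing of the linear coefficient of a nonnegative quadratic; these are equivalent manoeuvres.
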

\begin{proof}
    For any $h\in\N$, denote by $(F_h)_h$ and $F_\infty$ the quadratic forms associated with $(A^h)_h$ and $A^\infty$.
    Fix $t\in\R$, and denote by $G_h^{t\Phi_h}$ and $G^{t\Phi}$ the functionals arising from \Cref{Thm:perturbed}. By the properties of $(u_h)_h$ and \Cref{Thm:perturbed}, 
    \begin{equation*}
        \frac{G^{t\Phi}(u,A)-F_\infty(u,A)}{t}\leq \liminf_{h\to\infty}\frac{G_h^{t\Phi_h}(u_h,A)-F_h(u_h,A)}{t}.
    \end{equation*}
    Notice that
    \begin{equation*}
        \begin{split}
             \frac{G^{t\Phi}(u,A)-F_\infty(u,A)}{t}&=2\int_A\langle A^\infty X u,\C\Phi\rangle_{\R^m}{\rm d}x+t\int_A \langle  A^\infty\C\Phi,\C\Phi\rangle_{\R^m}{\rm d}x
        \end{split}
    \end{equation*}
    and
     \begin{equation*}
        \begin{split}
             \frac{G_h^{t\Phi_h}(u_h,A)-F_h(u_h,A)}{t}&=2\int_A\langle A^h X^h u_h,\C^h\Phi_h\rangle_{\R^m}{\rm d}x+t\int_A \langle A^h\C^h\Phi_h,\C^h\Phi_h\rangle_{\R^m}{\rm d}x.
        \end{split}
    \end{equation*}
    Since, up to subsequences, 
    \begin{equation*}
        \left|\int_A \langle A^h\C^h\Phi_h,\C^h\Phi_h\rangle_{\R^m}{\rm d}x\right|\leq \Lambda\int_A|\C^h\Phi_h|^2{\rm d}x\leq \Lambda \left(|\Om|+\int_A|\C\Phi|^2{\rm d}x\right),
    \end{equation*}
    and similarly for the second term, we let $t\to 0$ and conclude that 
    \begin{equation}\label{auxiliariamomenti}
        \int_A\langle A^\infty X u,\C\Phi\rangle_{\R^m}{\rm d}x\leq\liminf_{h\to\infty}\int_A\langle A^h X^h u_h,\C^h\Phi_h\rangle_{\R^m}{\rm d}x.
    \end{equation}
    The thesis follows by evaluating \eqref{auxiliariamomenti} in $\Psi=-\Phi$ and $\Psi_h=-\Phi_h$.
\end{proof}
\color{black}
\section{\texorpdfstring{$H$}{H}-convergence for linear operators under moving anisotropies}\label{Section7}
%
%
%
%
In this section, owing to the results proved in \Cref{Section6}, we consider the asymptotic behaviour of suitable anisotropic, symmetric, linear differential operators, extending previous results in the static case (cf. \cite{MR4230967,MR4536010,MR4504133}, and cf. e.g. \cite{caponi2024} for some recent non-local achievements).
Let $p=2$.
Let $\Om$ be a bounded domain of $\R^n$. Let $(X^h)_h\in\classunoraff\cup\classdueraff$.
Let $(A^h)_h\subseteq E^h(\Omega;\lambda,\Lambda)$, for some fixed $0<\lambda\leq\Lambda$. Let $(\varphi_h)_h\subseteq W^{1,p}_{X^h}(\Om)$ and $\varphi\in W^{1,p}_{X}(\Om)$ be such that \eqref{quasichiusuradebolehpduetre} holds.
For any $h\in\N$, let $\mathcal L^h$ be the linear anisotropic differential operator defined by 
\begin{equation*}\label{operL}
\mathcal L^h\coloneqq\,{\rm div}_{X^h}(A^h(x)X^h).
\end{equation*}
For any $f\in L^2(\Omega)$ and any $\mu \geq 0$, 
we consider anisotropic elliptic problems of the form
\begin{equation}\label{phformal}\tag{$P_h$}
\begin{cases}
\mu u+\mathcal{L}^h(u)=f&\text{in }\Omega,\\
u_h\in W^{1,2}_{X^h,\varphi_h}(\Omega).
\end{cases}
\end{equation}
Accordingly, we say that $u_h\in W^{1,2}_{X^h,\varphi_h}(\Om)$ is a \emph{weak solution} to \eqref{phformal} if
\begin{equation*}
    \mu\int_\Om u_hv\,{\rm d}x+\int_{\Omega}\langle A^hX^hu,X^hv\rangle_{R^m}\,{\rm d}x=\int_\Omega fv\,{\rm d}x
\end{equation*}
for any $v\in W^{1,2}_{X_h,0}(\Om)$. If each $X^h$ belongs to some $\mathcal{M}_h=\mathcal{M}_h(\Om^h_0,C^h_d,c_h,C_h)$, then \Cref{poincaprelprop}, \Cref{relkon}, \cite[Corollary 2.6]{Capogna2024}, \Cref{acosaloapplichiamo}  and a standard application of the direct methods imply the existence of a unique minimizer $u_h\in W^{1,2}_{X^h,\varphi_h}(\Om)$ of the functional
\begin{equation*}
    u\mapsto \frac{\mu}{2}\int_\Om|u|^2\,{\rm d}x+\frac{1}{2}\int_\Om\langle A^hX^hu,X^hu\rangle_{R^m}\,{\rm d}x-\int_\Omega fu\,{\rm d}x,
\end{equation*}
which in turn is the unique weak solution to \eqref{phformal}. We stress that, as we are assuming $(X^h)_h\in\classunoraff\cup\classdueraff$, all our previous considerations hold replacing $X^h$ with the limit $X$-gradient $X$.
Under the above assumption, we provide the definition of $H$-convergence for sequences of operators $(\mathcal L^h)_h\subseteq  \mathcal{E}^h(\Omega;\lambda,\Lambda)$.
\begin{definition}\label{hconvdef}
    Let $(\mathcal L^h)_h\subseteq  \mathcal{E}^h(\Omega;\lambda,\Lambda)$. We say that $(\mathcal L^h)_h\subseteq  \mathcal{E}^h(\Omega;\lambda,\Lambda)$ \emph{$H$-converges to} $\mathcal L^\infty\in \mathcal{E}^\infty(\Omega;\lambda,\Lambda)$ if the following holds. For any $f\in L^2(\Om)$, any $\mu\geq 0$, any $(\varphi_h)_h\subseteq W^{1,p}_{X^h}(\Om)$ and any $\varphi\in W^{1,p}_{X}(\Om)$ such that \eqref{quasichiusuradebolehpduetre} holds, let $(u_h)_h\subseteq W^{1,2}_{X_h,\varphi_h}(\Om)$ and $u_\infty\in W^{1,2}_{X,\varphi}(\Om)$ be, respectively, the unique weak solutions to \eqref{phformal} and the corresponding limit problem. Then, up to a subsequence,   \begin{equation*}\label{convsol}
        u_h\to u_\infty\text{ strongly in }L^2(\Om)\qquad\text{(convergence of solutions)}
    \end{equation*}
    and 
     \begin{equation*}
    \int_\Om\langle A^hX^h u_h,\C^h\Phi_h\rangle_{\R^m}{\rm d}x\to  \int_\Om\langle A^\infty X u_\infty,\C\Phi\rangle_{\R^m}{\rm d}x \qquad\text{(convergence of momenta)}
    \end{equation*}
    for any $\Phi\in L^2(\Om;\R^n)$ and any $(\Phi_h)_h\subseteq L^2(\Om,\rr^n)$ such that $\C^h\Phi_h\to\C\Phi$ strongly in $L^2(\Om,\rr^n)$.
\end{definition}

Owing to the results presented in \Cref{Section6}, the following $H$-compactness theorem holds.
\begin{theorem}\label{Thm:H-compactness}
Let $\Om\subseteq\R^n$ be a bounded domain. Let $(X^h)_h\in \classunoraff\cup\classdueraff$. Assume that, for any $h\in \N$, $X^h$ belongs to some $\mathcal M_h$ as in \Cref{deficlassminipoinc}. Let $(\mathcal L^h)_h\subseteq  \mathcal{E}^h(\Omega;\lambda,\Lambda)$. Then, up to a subsequence, there exists $\mathcal L^\infty\in  \mathcal{E}^\infty(\Omega;\lambda,\Lambda)$ such that $\mathcal L_h$ $H$-converges to $\mathcal L_\infty$.
\end{theorem}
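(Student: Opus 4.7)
The strategy is to read off $H$-compactness as a fairly direct consequence of the $\Gamma$-compactness theory developed in \Cref{Section6}, using the variational characterization of weak solutions to \eqref{phformal}.

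First, since $(A^h)_h\subseteq E^h(\Omega;\lambda,\Lambda)$, \Cref{compactnessL2perHconvergenza} provides a subsequence (not relabelled) and some $A^\infty\in E^\infty(\Omega;\lambda,\Lambda)$ such that the associated quadratic forms $F_h$ satisfy
\begin{equation*}
F_\infty(\cdot,A)=\Gamma(L^2)-\lim_{h\to\infty}F_h(\cdot,A)\qquad\text{for every }A\in\mathcal{A}.
\end{equation*}
We declare $\mathcal{L}^\infty\coloneqq\mathrm{div}_X(A^\infty(x)X)\in\mathcal{E}^\infty(\Omega;\lambda,\Lambda)$. Observe that this subsequence depends only on $(A^h)_h$ and $(X^h)_h$, \emph{not} on the data $f,\mu,\varphi_h,\varphi$ appearing in \Cref{hconvdef}.

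Next, I fix arbitrary data $f\in L^2(\Om)$, $\mu\ge 0$, $(\varphi_h)_h\subseteq W^{1,2}_{X^h}(\Om)$ and $\varphi\in W^{1,2}_X(\Om)$ satisfying \eqref{quasichiusuradebolehpduetre}. By \Cref{mainthmdirichletbc} applied to the sequence $F_h$, the corresponding affine functionals $F^{\varphi_h}_h$ (as in \eqref{affinelocalfunctionalsquadratic}) $\Gamma(L^2)$-converge to $F^\varphi$. Define the perturbation
\begin{equation*}
G(u)\coloneqq\frac{\mu}{2}\int_\Om |u|^2\,{\rm d}x-\int_\Om fu\,{\rm d}x,\quad u\in L^2(\Om),
\end{equation*}
which is $L^2$-continuous and satisfies the bilateral bounds \eqref{epsilondelta} as pointed out in \Cref{acosaloapplichiamo}. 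Each weak solution $u_h$ to \eqref{phformal} is, by the direct methods and the uniform ellipticity \eqref{growthforHconv}, the unique minimizer of $\frac{1}{2}F^{\varphi_h}_h+G$, and analogously for $u_\infty$. \Cref{minimaminimizerthm} then yields convergence of minima and, up to a further subsequence, $u_h\to u_\infty$ strongly in $L^2(\Om)$, giving the first clause of \Cref{hconvdef}.

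Finally, to obtain convergence of momenta, I invoke \Cref{thmmomentaconv}. For this I must check that the minimizing sequence $(u_h)_h$ is in fact a recovery sequence for $u_\infty$ relative to $F_\infty$ on $A=\Om$; this follows from the \eqref{liminfineq}-inequality combined with the convergence of minima, which forces $F_h(u_h,\Om)\to F_\infty(u_\infty,\Om)$ after passing once more to a subsequence (the standard ``convergence of minima forces the recovery property'' argument, since any $L^2$-limit of $u_h$ that fails to be a recovery sequence would contradict the equality of $\Gamma$-limits). \Cref{thmmomentaconv} then delivers
\begin{equation*}
\int_\Om\langle A^h X^h u_h,\C^h\Phi_h\rangle_{\R^m}\,{\rm d}x\longrightarrow \int_\Om\langle A^\infty X u_\infty,\C\Phi\rangle_{\R^m}\,{\rm d}x
\end{equation*}
for every admissible $(\Phi_h)_h,\Phi$, completing the verification of \Cref{hconvdef}. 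The main subtlety I anticipate is precisely this last diagonal argument: ensuring that the single subsequence extracted initially from $(A^h)_h$ is compatible with the later extractions from \Cref{minimaminimizerthm} and \Cref{thmmomentaconv}, and that the minimizers indeed serve as recovery sequences so that the momenta theorem applies without circularity.
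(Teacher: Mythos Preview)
Your proposal is correct and follows exactly the route the paper takes: the paper's proof is the one-line statement that the result follows by combining \Cref{compactnessL2perHconvergenza}, \Cref{mainthmdirichletbc}, \Cref{acosaloapplichiamo}, \Cref{minimaminimizerthm} and \Cref{thmmomentaconv}, and your write-up is a faithful unpacking of that combination. The subtlety you flag about minimizers being recovery sequences is handled just as you indicate (convergence of minima plus continuity of $G$ forces $F_h(u_h,\Om)\to F_\infty(u_\infty,\Om)$), and the subsequence bookkeeping is consistent with the ``up to a subsequence'' clause already built into \Cref{hconvdef}.
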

\begin{proof}
    The proof follows combining \Cref{mainthmdirichletbc}, \Cref{minimaminimizerthm}, \Cref{acosaloapplichiamo}, \Cref{compactnessL2perHconvergenza} and \Cref{thmmomentaconv}.
\end{proof}

\section*{Declarations}
\footnotesize{
%
\noindent{\it \textbf{Funding information.}} 
The authors are member of the Istituto Nazionale di Alta Matematica (INdAM), Gruppo Nazionale per l'Analisi Matematica, la Probabilità e le loro Applicazioni (GNAMPA). The authors have been supported by INdAM--GNAMPA 2023 Project \textit{Equazioni differenziali alle derivate parziali di tipo misto o dipendenti da campi di vettori}, CUP E53\-C22\-001\-930\-001.  
S. V.\ is supported by MIUR-PRIN 2022 Project \emph{Regularity problems in sub-Riemannian structures}  (Grant Code: 2022F4F2LH), and by INdAM-GNAMPA 2025 Project \emph{Structure of sub-Riemannian hypersurfaces in Heisenberg groups}, (Grant Code: CUP\_ES324001950001).
A.M.\ is supported by the Spanish State Research Agency, through the Severo Ochoa and María de Maeztu Program for Centers and Units of Excellence in R\&D (CEX2020-001084-M), by MCIN/AEI/10.13039/501100011033 (PID2021-123903NB-I00), by Generalitat de Catalunya (2021-SGR-00087). 
A.M.\ has been supported by the INdAM-GNAMPA 2024 Project \emph{Pairing and div-curl lemma: extensions to weakly differentiable vector fields and nonlocal differentiation} (Grant Code: CUP\_E53C23001670001).
A.M. and F.P.\ are supported by the INdAM - GNAMPA 2025 Project \emph{Metodi variazionali per problemi dipendenti da operatori frazionari isotropi e anisotropi} (Grant Code: CUP\_E5324001950001).
This research was supported by the Centre de Recerca Matemàtica of Barcelona (CRM), under the International Programme for Research in Groups (IP4RG).
\smallskip

\noindent{\it \textbf{Conflict of interest.}} The authors declare that they have no financial interests or conflicts of interest related to the subject matter or materials discussed in this paper.
\smallskip

\noindent{\it \textbf{Data availability statement.}} Data sharing not applicable to this article as no datasets were generated or analyzed during the current study.}
\bibliographystyle{abbrv}
\bibliography{Maione_Paronetto_Verzellesi}
\end{document}